\theoremstyle{plain} 
\newtheorem{theorem}{Theorem}[section]
\newtheorem{lemma}[theorem]{Lemma} 
\newtheorem{corollary}[theorem]{Corollary} 
\newtheorem{proposition}[theorem]{Proposition}
\newtheorem{definition}[theorem]{Definition}
\newtheorem{remark}[theorem]{Remark}
\newcommand{\charf}[1]{\mbox{\raise.48ex\hbox{$\chi$}$_{#1}$}}
\newcommand{\N}{\mathcal{N}}
\newcommand{\R}{\mathbb{R}}
\newcommand{\A}{\mathcal{A}}
\newcommand{\ad}{\mathrm{add}}
\newcommand{\co}{\mathfrak{c}}
\newcommand{\la}{\langle}
\newcommand{\ra}{\rangle}
\DeclareMathOperator{\card}{card}
\DeclareMathOperator{\cov}{cov}
\DeclareMathOperator{\cof}{cof}
\DeclareMathOperator{\ZFC}{ZFC}
\DeclareMathOperator{\non}{non}
\DeclareMathOperator{\add}{add}
\title[Lineability on nets and uncountable sequences of functions]
{Lineability on nets and uncountable sequences of functions in measure theory}
\author[Rodr\'{i}guez-Vidanes]{Daniel L.~Rodr\'{i}guez-Vidanes}
\address[Daniel L.~Rodr\'{i}guez-Vidanes]{\mbox{}\newline\indent Instituto de Matem\'atica Interdisciplinar (IMI), \newline \indent Departamento de An\'{a}lisis y Matem\'{a}tica Aplicada \newline \indent Facultad de Ciencias Matem\'{a}ticas \newline \indent Plaza de Ciencias 3 \newline \indent Universidad Complutense de Madrid \newline \indent	Madrid, 28040 (Spain).}
\email{dl.rodriguez.vidanes@ucm.es}
\begin{document}

\subjclass{15A03; 28A20; 54A20; 54A25; 03E35}

\keywords{Dominated Convergence Theorem, Monotone Convergence Theorem, Fatou's Lemma, measurable function, nets, uncountable sequences, algebrability}
\thanks{The author was supported by Grant PGC2018-097286-B-I00 and by the Spanish Ministry of Science, Innovation and Universities and the European Social Fund through a “Contrato Predoctoral para la Formación de Doctores, 2019” (PRE2019-089135).
The paper was partially done at West Virginia University (WVU) during a stay of the author, who thanks the support provided by the Department of Mathematics at WVU.
The author also acknowledges the help of his PhD advisors, Prof. Krzysztof C. Ciesielski, Prof. Juan B. Seoane-Sepúlveda and Prof. Gustavo A. Muñoz-Fernández, whose insightful comments improved the presentation and results of this paper}

\begin{abstract}
	In general, some of the well known results of measure theory dealing with the convergence of sequences of functions such as the Dominated Convergence Theorem or the Monotone Convergence Theorem are not true when we consider arbitrary nets of functions instead of sequences.
	In this paper, we study the algebraic genericity of families of nets of functions that do not satisfy important results of measure theory, and we also analyze the particular case of uncountable sequences.
\end{abstract}

\maketitle

%\tableofcontents
%
%\newpage
\section{Introduction and preliminaries}

The Dominated Convergence Theorem (DCT), Fatou's Lemma (FL), the Monotone Convergence Theorem (MCT), the pointwise limit of a sequence of measurable functions is measurable (LM), and almost everywhere (a.e.) convergence implying convergence in measure in finite measure spaces (AE) are cornerstone results of Measure Theory (see \cite{Fo}).
They are applied over sequences of functions indexed by $\mathbb N$ endowed with the usual ordering, i.e., sequences of the form $(f_n)_{n\in \mathbb N}$.
However, the DCT, FL, the MCT, LM and AE are not true, in general, for arbitrary nets of functions (see \cite[pp.~415]{AlBo}).
The main goal of this paper is to show that there are ``large'' algebraic structures such as algebras and cones contained in the families of nets and uncountable sequences of functions that do not satisfy the DCT, the MCT, FL, LM or AE.
This search of algebraic structures is known as lineability.

It is important to mention that the DCT is satisfied for arbitrary nets of functions when the measure space is countable (\cite[lemma~2.3]{BeDe}).
Furthermore, in \cite{Ma3}, the author proves that the DCT, FL and the MCT are also true in the case of the Lebesgue measure over $\mathbb R$ (or $\mathbb C$) for nets of functions indexed by \textit{countably accessible directed sets} (concept introduced by 
J.E. Marsden in \cite{Ma2}).

Let us now present the necessary background of lineability which will be the major line of study throughout the paper.
Lineability has been a fruitful field of research for many mathematicians since the term was coined by V.I. Gurariy in the early 2000's (see \cites{ArGuSe,Se}).
The aim of lineability is the search of algebraic structures within subsets of linear spaces that are not necessarily linear.
Although the term lineability was introduced by Gurariy and added recently to the AMS subject classification under 15A03 and 46B87, the result that may have started this field is perhaps that of Levin and Milman in 1940 (\cite{LeMi}), which states that the set of all functions of bounded variation on $[0,1]$ does not contain a closed infinite dimensional subspace in $C([0,1])$
endowed with the supremum norm.
%\ch{This is one of the negative results we were searching for. Add this to our list.\\}

Since its introduction, lineability has made an impact in various areas of mathematics such as Set Theory (\cite{CiSe}), Measure Theory (\cites{MuPaPu,BoCaFaPeSe,BeCa}), Complex Analysis (\cite{BeCaSe}), non-Archimedean Analysis (\cite{FeMaRoSe}), linear dynamics (\cite{ArCoPeSe}), among others.
For more information and a current state of the art on this topic, we also refer the interested reader to \cites{ArBeMuPrSe,ArBePeSe,BaBiFiGl,BeCaMuSe,BeFeMaSe,CiRoSe} and the references therein.

The following terminology has been used in several results of lineability (see \cites{AiPeGaSe,FeGeTr,AdFaRoSe}).
Let $V$ be a vector space (over $\mathbb R$ or $\mathbb C$), $A\subset V$ and $\kappa$ a cardinal number.
\begin{itemize}
	\item $A$ is \textbf{positively (resp. negatively) $\kappa$-coneable} if there exists a $B\subset A$ with $\text{card}(B)=\kappa$ of linearly independent vectors in $V$ such that the positive (resp. negative) cone generated by $B$ is contained in $A$, i.e.,
		$$
		\bigcup_{n\in \mathbb N} \left\{\sum_{i=1}^n a_i x_i \colon a_1,\ldots,a_n>0\ (\text{resp. }<0) \text{ and } x_1,\ldots,x_n\in B \right\}\subset A.
		$$
	If $A$ is positively and negatively $\kappa$-coneable, then we simply say that $A$ is \textbf{$\kappa$-coneable}.
\end{itemize}
It is important to mention that, although the definition of positively (negatively) $\kappa$-coneable is applied correctly in the references above, the way $\kappa$-coneable 
is given in all them is not entirely correct.
Here we present the precise definition of positively (negatively) $\kappa$-coneable.

The main results of this paper consider the following terminology which is by now standard (see \cite{ArBePeSe}).
Let $V$ be a vector space contained in a (not necessarily unital) algebra (over a field $\mathbb K$), $A\subset X$, and $\kappa$ and $\mu$ be cardinal numbers.
\begin{itemize}
	\item $A$ is \textbf{$(\kappa,\mu)$-algebrable} if there exists an algebra $B$ such that $B\setminus \{0\}\subset A$, the dimension of $B$ (as a vector space) is equal to $\kappa$ and there is a minimal system of generators $S$ for the algebra $B$ such that $\card(S)=\mu$.
	If $\kappa=\mu$ we simply say that $A$ is \textbf{$\kappa$-algebrable}.
\end{itemize}
By $S$ being a minimal system of generators for $B$ we mean that the algebra $B$ is generated by $S$ and, for every $s\in S$, $s$ does not belong to the algebra generated by $S\setminus \{s\}$.

\begin{itemize}
	\item $A$ is \textbf{strongly $\kappa$-algebrable} if there exists a $\kappa$-generated free algebra $B$ such that $B\setminus \{0\}\subset A$.
\end{itemize}

Recall that an algebra $B$ is a $\kappa$-generated free algebra (over a field $\mathbb K$) if there exists $X\subset B$ of cardinality $\kappa$ such that any function $f$ from $X$ to some algebra $\mathfrak A$ (over $\mathbb K$) can be uniquely extended to a homomorphism from $B$ to $\mathfrak A$.
We call $X$ the set of \textit{free generators} of the algebra $B$.
In a commutative algebra there is a simpler equivalent definition of a $\kappa$-generated free algebra.
In particular, $X=\{x_\alpha\colon \alpha<\kappa \}$ contained in a commutative algebra $\mathfrak A$ generates a free subalgebra $B$ if, and only if, (1) the algebra generated by $X$ is $B$ and (2) for any polynomial $P$ of degree $n\in \mathbb N$ and without free term and for any
$x_{\alpha_1},\ldots,x_{\alpha_n}\in X$ distinct we have that $P(x_{\alpha_1},\ldots,x_{\alpha_n}) = 0$ if and only if $P = 0$.
If $X\subset B$ satisfies (2), we say that the vectors in $X$ are \textit{algebraically independent}.
Note that in this case $X$ is a set of free generators if and only if the set of all elements $x_{\alpha_1}^{k_1} x_{\alpha_2}^{k_2}\cdots x_{\alpha_n}^{k_n}$, where $k_i$ belongs to $\mathbb N$ for every $i\in \{1,\ldots,n \}$, is linearly independent and generates $B$.

\subsection{Basic concepts}

Throughout the paper we will use standard notation and definitions from set theory. 
Ordinal numbers will be identified with the set of their predecessors and cardinal numbers with the initial ordinals. 
Given a set $A$, the cardinality of $A$ will be denoted by $\card(A)$.
The set of functions from $A$ to $B$ will be denoted by $B^A$.
The sets of positive integers, non-negative integers, real numbers and the closed unit interval will be denoted by $\mathbb N$, $\mathbb N_0$, $\mathbb R$ and $[0,1]$, respectively.
For any $X\subseteq [0,1]$ we will denote by $\chi_X$ the characteristic function of $X$.
%The standard Cantor ternary set will be denoted by $\mathfrak C$.
Let $\omega$, $\omega_1$, $\omega_2$, and $\mathfrak c$
 denote the first infinite cardinal, the first uncountable cardinal, the second uncountable cardinal and the cardinality of $\R$%$\mathfrak c$
 , respectively.

Given a measure space $(\Omega,\mathcal F,\mu)$, we say that a real-valued measurable function $f$ defined on $\Omega$ 
is integrable if $\int_\Omega |f| d\mu<\infty$.
We will denote by $\lambda$ the Lebesgue measure on the real line.
Also let $\mathcal N$ denote the family of all Lebesgue measure zero (null) sets contained in $[0,1]$.
By $\text{non}(\mathcal N)$, $\text{cov}(\mathcal N)$ and $\text{add}(\mathcal N)$ we denote, respectively, the \textit{uniformity number}, \textit{covering number} and \textit{additivity number} of $\mathcal N$ which are defined as:
\begin{align*}
\text{non}(\mathcal N) & :=\min \{\card(X) \colon X\subseteq [0,1] \text{ and } X\notin \mathcal N \},\\
\text{cov}(\mathcal N) & :=\min \{\card(X) \colon X\subset \mathcal N \text{ and } \bigcup X=[0,1] \},\\
\text{add}(\mathcal N) & :=\min \{\card(X) \colon X\subset \mathcal N \text{ and } \bigcup X \notin \mathcal N \}.
\end{align*}
It is well known that 
	$$
	\omega_1\leq \text{add}(\mathcal N)\leq \text{non}(\mathcal N),\text{cov}(\mathcal N)\leq \mathfrak c.
	$$
For more information on $\text{non}(\mathcal N)$, $\text{cov}(\mathcal N)$ and $\text{add}(\mathcal N)$, see \cite{BaJu}.

\subsection{Nets and uncountable sequences}

The main results of this paper deal with nets and uncountable sequences of functions.
For completeness, let us recall firstly some standard notations and definitions of nets.
A directed set is a nonempty set $\mathcal A$ endowed with a preorder $\leq$ (that is, a reflexive and transitive binary relation) such that every pair of elements of $\mathcal A$ has an upper bound.
A net in a set $X$ is a function $x\colon \mathcal A\to X$, where $\mathcal A$ is a directed set.
The directed set $\mathcal A$ is called the index set and the elements of $\mathcal A$ are called indexes.
We will denote the function $x(\cdot)$ by $(x_A)_{A\in \mathcal A}$.
Notice that $\mathbb N$ with the usual ordering is a directed set.

As in the case of sequences, we can define the convergence of a net contained in a topological space.
A net $(x_A)_{A\in \mathcal A}$ in a topological space $(X,\tau)$ converges to $x\in X$ if $(x_A)_{A\in \mathcal A}$ is eventually in every neighborhood of $x$, i.e., for each neighborhood $U^x$ of $x$ there exists $A_0\in \mathcal A$ such that $x_A\in U^x$ for every $A\geq A_0$.
We say that $x$ is the \textit{limit} of $(x_A)_{A\in \mathcal A}$ and write $x=\lim_{A\in \mathcal A} x_A$.
%If $(x_A)_{A\in \mathcal A}\subset \R$, we say that $(x_A)_{A\in \mathcal A}$ diverges to $\infty$ (resp. $-\infty$) if for every $n\in \mathbb N$, there exists $A_0\in \mathcal A$ such that $x_A\geq n$ (resp. $x_A\leq -n$) for every $A\geq A_0$.
%We denote $\lim_{A\in \mathcal A} x_A=\pm \infty$ when $(x_A)_{A\in \mathcal A}$ diverges to $\pm \infty$.

For completeness, we will show the following simple fact on the limits of nets of real numbers.

\begin{remark}\label{rem:3}
	Let $\mathcal A$ be a directed set. 
	If $(x_A)_{A\in \mathcal A}$ and $(y_A)_{A\in \mathcal A}$ are nets of real numbers that converge, respectively, to $x$ and $y$, and $\alpha,\beta\in \R$, then the net $(\alpha x_A+\beta y_A)_{A\in \mathcal A}$ converges to $\alpha x+\beta y$.
	Indeed, it is straightforward if $\alpha$ or $\beta$ are $0$.
	So assume that $\alpha,\beta\neq 0$.
	Fix $\varepsilon>0$.
	By definition of convergence, there exist $A_0^x,A_0^y\in \mathcal A$ such that $|x_A-x|<\frac{\varepsilon}{2 |\alpha|}$ for every $A\geq A_0^x$ and $|y_A-y|<\frac{\varepsilon}{2 |\beta|}$ for every $A\geq A_0^y$.
	Since $A$ is a directed set, there is an $A_0\in \mathcal A$ such that $A\geq A_0^x,A_0^y$.
	Hence, for every $A\geq A_0\geq A_0^x,A_0^y$ we have, by the triangular inequality, that 
		$$
		|\alpha x_A+\beta y_A-(\alpha x+\beta y)|\leq |\alpha||x_A-x|+|\beta||y_A-y|<\varepsilon.
		$$
	Therefore, the limit operator $\lim_{A\in \mathcal A}(\cdot)$ on converging nets of real numbers is linear.
\end{remark}

An element $x$ in a topological space $(X,\tau)$ is called a \textit{limit point} (or a cluster point) of a net $(x_A)_{A\in \mathcal A}$ if for each neighborhood $U^x$ of $x$ and each index $A\in \mathcal A$ there exists $A_0\in \mathcal A$ with $A_0\geq A$ such that $x_{A_0}\in U^x$.
We will denote by $\text{Lim}(x_A)$ the (possibly empty) set of limit points of $(x_A)_{A\in \mathcal A}$.
If $(x_A)_{A\in \mathcal A}$ is a bounded net of real numbers then, as in the case of sequences, there exists the infimum and supremum of $\text{Lim}(x_A)$ called the \textit{limit inferior} and \textit{limit superior} of $(x_A)_{A\in \mathcal A}$, respectively, and they are also finite.
We will denote the limit inferior and limit superior of a net of real numbers $(x_A)_{A\in \mathcal A}$ by $\liminf_{A\in \mathcal A} x_A$ and $\limsup_{A\in \mathcal A} x_A$, respectively.
It is easy to see that 
$$
\liminf_{A\in \mathcal A} x_A\leq\limsup_{A\in \mathcal A} x_A
$$ 
and, furthemore, $\liminf_{A\in \mathcal A} x_A=\limsup_{A\in \mathcal A} x_A$ if and only if $(x_A)_{A\in \mathcal A}$ converges to $\liminf_{A\in \mathcal A} x_A=\limsup_{A\in \mathcal A} x_A$.
For more information on nets, we refer the interested reader to \cite[section~2.4]{AlBo}.

Let us approach now the case of uncountable sequences.
Let $X$ be a nonempty set.
A transfinite sequence or an $\omega_1$-sequence in $X$ is a function $x\colon \omega_1\to X$, denoted by $(x_\alpha)_{\alpha<\omega_1}$.
Now, if we assume that $X$ is a topological space, an $\omega_1$-sequence $(x_\alpha)_{\alpha<\omega_1}$ converges to $x\in X$ if for every neighborhood $U^x\subset X$ of $x$,  there is an ordinal $\alpha_0<\omega_1$ such that $x_\alpha \in U^x$ for any $\alpha>\alpha_0$ with $\alpha<\omega_1$.
In the literature $x$ is known as the transfinite limit or \textit{$\omega_1$-limit} of $(x_\alpha)_{\alpha<\aleph_1}$ and it is denoted by $x=\lim_{\alpha<\omega_1} x_\alpha$.
(The notions of $\omega_1$-sequences and $\omega_1$-limits can be traced back to a 1907 paper of J. Mollerup \cite{Mo}.)
The main results of this paper dealing with transfinite sequences will be considered in the real line with the Euclidean metric.
Notice that in a metric space every convergent $\omega_1$-sequence is eventually constant as any metric space is first countable.
This is shown in Remark~\ref{rem:1} below.

\begin{remark}\label{rem:1}
	Let $X$ be a first countable space.
	A transfinite sequence $(x_\alpha)_{\alpha<\omega_1}$ is convergent if, and only if, there is an ordinal $\alpha_0<\omega_1$ such that $x_\alpha=x_{\alpha_0}$ for every $\alpha> \alpha_0$ with $\alpha<\omega_1$.
%	\ck{This is true for any FIRST countable space.\\} 
\end{remark}

In 1920, W. Sierpi\'{n}ski in \cite{Si} began the study of convergence of transfinite sequences of real functions $(f_\alpha)_{\alpha<\omega_1}$. 
An $\omega_1$-sequence of functions $(f_\alpha)_{\alpha<\omega_1}\subset X$ is said to converge pointwise to a function $f\in {\mathbb R}^X$ (called the $\omega_1$-limit of $(f_\alpha)_{\alpha<\omega_1}$) if $f(x)=\lim_{\alpha<\omega_1} f_{\alpha}(x)$ for every $x\in X$.
For a family $\mathcal F\subset {\mathbb R}^X$, we define the transfinite closure or $\omega_1$-closure of $\mathcal F$  as the 
family:
	$$
	\text{LIM}_{\omega_1} (\mathcal F)=\left\{\lim_{\alpha<\omega_1} f_\alpha \colon (f_\alpha)_{\alpha<\omega_1}\subset \mathcal F \right\}.
	$$
We say that $\mathcal F$ is closed with respect to transfinite limits or $\omega_1$-closed if $\text{LIM}_{\omega_1}(\mathcal F)=\mathcal F$.
If $\text{LIM}_{\omega_1}(\mathcal F)=\mathbb R^X$, then we say that $\mathcal F$ is $\omega_1$-dense.
In \cite{Si}, Sierpi\'{n}ski proved that the family of continuous real functions, $C(\mathbb R)$, and the family of Baire 1 functions, $\mathcal B_1(\mathbb R)$, are $\omega_1$-closed, i.e., the pointwise limit of an $\omega_1$-sequence of continuous real functions is continuous (analogously with the class $\mathcal B_1(\mathbb R)$). 
Moreover, Sierpi\'{n}ski proved, in particular, that if an $\omega_1$-sequence of continuous functions converges, then the $\omega_1$-sequence is eventually constant (the idea is that every continuous function is determined on the rational numbers, i.e., a countable set).
Interestingly this is not the case for Baire $1$ functions as the following example provided by Sierpi\'{n}ski shows.
Let $\{x_\alpha \colon \alpha<\omega_1  \}$ be a subset of $\R$ of cardinality $\omega_1$, then, for every $\alpha<\omega_1$, let
	$$
	f_\alpha(x)=\chi_{\{x_\alpha\}}.
	$$
Notice that $(f_\alpha)_{\alpha<\omega_1}$ is an $\omega_1$-sequence of distinct Baire $1$ functions that converges pointwise to $0$.
(Compare the construction of the transfinite sequence $(f_\alpha)_{\alpha<\omega_1}$ with the net $(f_A)_{A\in \A}$ in Remark~\ref{rem:2} (3).)
In the case of the class of Baire 2 functions, $\mathcal B_2(\mathbb R)$, it is known that it is independent of ZFC (where ZFC stands for the Zermelo-Fraenkel axiomatic system with the axiom of choice) that $\mathcal B_2(\mathbb R)$ is $\omega_1$-closed 
(see \cite{Ko} or \cite{Na1}).
In fact, under ZFC with the Continuum Hypothesis (CH), Sierpi\'{n}ski also proved that $\mathcal B_2(\mathbb R)$ is $\omega_1$-dense.

Throughout the 20th century the $\omega_1$-closure of several classes of families of real functions has been studied by many mathematicians (see, for instance, \cites{Li,Ne} for $\omega_1$-closed families and \cites{Gr,Na4} for $\omega_1$-dense families).
The concepts of convergence and the definition of $\omega_1$-sequences can be generalized analogously as $\alpha$-sequences for any ordinal $\alpha$; though the convergence of $\alpha$-sequences is reducible to convergence of $\cof(\alpha)$-sequences.
For completeness, we will study why such reduction is considered in Proposisitions~\ref{prop:1} and~\ref{prop:2}.

In this paper, we will be primarily interested in studying the lineability of nets of functions that do not satisfy the DCT, the MCT, FT, AE and LM, but we will pay special attention to the particular case of $\kappa$-sequences.

In \cite{Ko}, P. Komj\'{a}th proved that it is consistent with $\neg$CH that $\mathcal B_2(\mathbb R)$ is $\omega_2$-closed and it is also consistent with $\neg$CH that $\mathcal B_2(\mathbb R)$ is $\omega_2$-dense.
The $\mathfrak c$-closure of arbitrary families of functions has also been studied when $\mathfrak c$ is a regular cardinal (see \cite[theorem~2.11]{Na5}).
In this paper, 
we will be able to 
examine %study 
the $\add(\N)$-closure and the $\non(\N)$-closure of the family of Lebesgue measurable real 
functions, denoted by $\mathcal L$, as
a consequence of the main results.
The $\omega_1$-closure of $\mathcal L$ has been studied by many mathematicians under several set-theoretical assumptions, see \cites{Di,NaWe,NoSzWe}.

Finally, let show that it makes sense to study the existence of algebraic structures inside families of nets.
Given a directed set $\mathcal A$ and $X\subset \R$ a nonempty set, we will %be 
denote
by $\left( \mathbb{R}^X\right)^{\mathcal A}$ the family of nets of real-valued functions on $X$ indexed by $\mathcal A$ endowed with the following operations of addition, product and scalar multiplication over $\mathbb R$, respectively.
	\begin{align*}
		& +\colon \left( \mathbb{R}^X\right)^{\mathcal A}\times \left(\mathbb{R}^X\right)^{\mathcal A} \to \left( \mathbb{R}^X\right)^{\mathcal A}\\
		& \quad \quad \left((f_A)_{A\in \mathcal A}, (g_A)_{A\in \mathcal A}\right) \mapsto (f_A)_{A\in \mathcal A}+(g_A)_{A\in \mathcal A}:=(f_A+g_A)_{A\in \mathcal A},\\
		& \bullet\colon \left( \mathbb{R}^X\right)^{\mathcal A}\times \left( \mathbb{R}^X\right)^{\mathcal A} \to \left( \mathbb{R}^X\right)^{\mathcal A}\\
		& \quad \quad\left((f_A)_{A\in \mathcal A}, (g_A)_{A\in \mathcal A}\right) \mapsto (f_A)_{A\in \mathcal A}\bullet (g_A)_{A\in \mathcal A}:=(f_A g_A)_{A\in \mathcal A},\\
		& \cdot \colon \mathbb{R}\times \left( \mathbb{R}^X\right)^{\mathcal A} \to \left( \mathbb{R}^X\right)^{\mathcal A}\\
		& \quad \ \left(r, (f_A)_{A\in \mathcal A}\right) \mapsto r\cdot (f_A)_{A\in \mathcal A}:=(r f_A)_{A\in \mathcal A}.
	\end{align*}
It is easy to see that $\left( \mathbb{R}^X\right)^{\mathcal A}$ endowed with $+$, $\bullet$ and $\cdot$ is a commutative algebra over $\mathbb R$.
For every $n\in \mathbb N_0$ and $(f_A)_{A\in \mathcal A}\in \left( \mathbb{R}^X\right)^{\mathcal A}$, we denote by $(f_A)_{A\in \mathcal A}^{n}$ the product of $(f_A)_{A\in \mathcal A}$ $n$-times if $n\geq 1$, and $(\chi_{X})_{A\in \mathcal A}$ if $n=0$.

Analogously, we will denote $\left( \mathbb{R}^X\right)^{\kappa}$ the algebra of $\kappa$-sequences of real-valued functions on $X$, where $\kappa$ is an infinite initial ordinal ($\kappa=\omega$ is simply the classical case of countable sequences $(f_n)_{n\in \mathbb N}$).

It is well known that if we have a vector space $V$ over a field $\mathbb K$ of infinite dimension, then $\card(V)=\max\{\dim(V),\card(\mathbb K) \}$, where $\dim(V)$ denotes the dimension of $V$.
By considering, for instance, the space of $\omega$-sequences $\left( \R^\R\right)^{\omega}$, we have that $\left( \R^\R\right)^{\omega}$ has infinite dimension.
In fact, since $\card\left(\left( \R^\R\right)^{\omega} \right)=2^{\mathfrak c}>\mathfrak c=\card(\R)$, we have that $\dim\left(\left( \R^\R\right)^{\omega} \right)=2^{\mathfrak c}$.
Moreover, for any cardinal number $\kappa\geq \omega$, note that $\left( \R^\R\right)^{\kappa}$ has infinite dimension and $\dim\left(\left( \R^\R\right)^{\kappa} \right)=2^{\kappa \cdot \mathfrak c}$.
Indeed, since $\card\left(\left( \R^\R\right)^{\kappa} \right)=2^{\kappa\cdot \mathfrak c}>\mathfrak c=\card(\R)$ we obtain that $\card\left(\left( \R^\R\right)^{\kappa} \right)=\dim\left(\left( \R^\R\right)^{\kappa} \right)$.
Therefore, by taking $\kappa\geq 2^{\mathfrak c}$, there always exists a vector space of $\kappa$-sequences of real functions exceeding the dimension of 
$\R^\R$.
For this reason, we will focus on trying to find the smallest possible directed set $\mathcal A$ (resp., cardinal numbers $\kappa$) in terms of cardinality such that the algebraic structures contained in our families of nets (resp., $\kappa$-sequences) has dimension at most $2^{\mathfrak c}$.

\subsection{``Monster'' families and summary of results}

Let us define the following families of nets.
Fix %$\mathcal A$ 
a directed set $\mathcal A$.
\begin{itemize}
	\item[$\mathcal{NM}_{\mathcal A}$:] The family of nets of Lebesgue measurable functions $(f_A)_{A\in \mathcal A}\in \left( \mathbb{R}^{[0,1]}\right)^{\mathcal A}$ such that: (1) if $A\leq B$, then $0\leq f_A\leq f_B$ a.e., (2) $(f_A)_{A\in \mathcal A}$ converges pointwise a.e. to an integrable function $f\in \mathbb{R}^{[0,1]}$, and (3) 
	 	$$
	 	\lim_{A\in \mathcal A} \int_{[0,1]} f_A d\lambda\neq \int_{[0,1]} f d\lambda.
	 	$$ 
	
	\item[$\mathcal{NF}_{\mathcal A}$:] The family of nets of Lebesgue measurable functions $(f_A)_{A\in \mathcal A}\in \left( \mathbb{R}^{[0,1]}\right)^{\mathcal A}$ with $f_A\geq 0$ a.e. such that the function $\liminf_{A\in \mathcal A}(f_A(x))$ is integrable and $\int_{[0,1]} \liminf_{A\in \mathcal A} f_A d\lambda > \liminf_{A\in \mathcal A} \int_{[0,1]} f_A d\lambda$.
	
	\item[$\mathcal{AN}_{\mathcal A}$:] The family of nets of Lebesgue measurable functions $(f_A)_{A\in \mathcal A}\in (\mathbb R^\R)^{\mathcal A}$ that converge a.e. to a nonmeasurable function $f\in \mathbb R^\R$.
\end{itemize}
	
\begin{remark}\label{rem:4}
	Observe that the families of nets that satisfy the Monotone Convergence Theorem and the ones that satisfy Fatou's Lemma clearly form a positive cone.
	It is also obvious that the families of nets of measurable functions that converge a.e. to a measurable function already form an algebra.
	Therefore it is not necessary to study the coneability or algebrability in these 
	cases.
\end{remark}

\begin{itemize}	
	\item[$\mathcal{ND}_{\mathcal A}$:] The family of nets of Lebesgue measurable functions $(f_A)_{A\in \mathcal A}\in \left( \mathbb{R}^{[0,1]}\right)^{\mathcal A}$ such that (1) $|f_A|\leq g$ a.e. and all $A\in \mathcal A$, for some integrable function $g\in \mathbb R^{[0,1]}$, (2) $\lim_{A\in \mathcal A} f_A(x)=f(x)$ a.e., for some integrable function $f\in \mathbb R^{[0,1]}$, and (3) $\lim_{A\in \mathcal A} \int_{[0,1]} |f_A-f| d\lambda \neq 0$.
	
	\item[$\mathcal{ANM}_{\mathcal A}$:] The family of nets of Lebesgue measurable functions $(f_A)_{A\in \mathcal A}\in \left( \mathbb{R}^{[0,1]}\right)^{\mathcal A}$ that converges %converge 
	a.e. to a measurable function $f\in \mathbb{R}^{[0,1]}$, but $(f_A)_{A\in \mathcal A}$ does not converge in measure to $f$.
\end{itemize}

\begin{remark}\label{rem:2}
	\begin{itemize}
		\item[(1)] It is clear that $\mathcal{AN}_\omega$, $\mathcal{NM}_\omega$, $\mathcal{NF}_\omega$, $\mathcal{ND}_\omega$ and $\mathcal{ANM}_\omega$ are empty sets.
		
		\item[(2)] We have that $\mathcal{ND_A}\subseteq \mathcal{ANM_A}$.
		This is an immediate consequence of the definition of $\mathcal{ND_A}$ since (2) and (3) in the definition of $\mathcal{ND_A}$ imply that $(f_A)_{A\in \A}$ converges a.e. to a measurable function $f\in \mathbb R^{[0,1]}$ but not in measure.

		\item[(3)] It is not true, in general, that:
		if $(f_A)_{A\in \mathcal A}\in \left(\R^{[0,1]} \right)^{\mathcal A}$ and $g\in \mathbb R^{[0,1]}$ satisfy that $|f_A|\leq g$ a.e. for all $A\in \A$ and $(f_A)_{A\in \mathcal A}$ converges a.e. to a function $f\in \R^{[0,1]}$, then $|f|\leq g$ a.e.
		Indeed, let $\mathcal A$ be the set of all finite subsets of $[0,1]$ endowed with the binary relation of containment $\subseteq$.
		For every $A\in \mathcal A$, define 
			$$
			f_A:=\chi_A.
			$$
		Since $A$ is finite, it is obvious that $A\in \N$, i.e., $f_A$ is measurable and $f_A=0$ a.e. 
		However, observe that $(f_A)_{A\in \mathcal A}$ converges pointwise everywhere to $\chi_{[0,1]}$ which is strictly greater than $0$ at every $x\in [0,1]$.
	\end{itemize}
\end{remark}

In \cite{CaFeFiSe}, J. Carmona-Tapia et al. started for the first time to study lineability properties of families of nets from several points of view; one of them being Measure Theory. 
In particular, they show (implicitly) in \cite[theorem~2.13]{CaFeFiSe} and \cite[theorem~2.15]{CaFeFiSe}, respectively, that there are directed sets $\mathcal A$ and $\mathcal A^\prime$ both having cardinality $\mathfrak c$ such that $\mathcal{AN}_{\mathcal A}$ is $\mathfrak c$-lineable and $\mathcal{ND}_{\mathcal A^\prime}$ is strongly $\mathfrak c$-algebrable.
The main goal of their study was to find directed sets $\mathcal A$ such that $\mathcal{AN}_{\mathcal A}$ and $\mathcal{ND}_{\mathcal A^\prime}$ (with the $0$ net) contain large algebraic structures, but the size in terms of cardinality of the index set was not taken into account.
In this paper, we will provide improvements of \cite[theorems~2.13 and~2.15]{CaFeFiSe}

The main results are divided into two sections.
In Section~\ref{sec:2}, under ZFC, we will search for uncountable directed sets $\mathcal A$ that are not cardinal numbers such that the families of nets $\mathcal{NM_A}$, $\mathcal{NF_A}$, $\mathcal{ND_A}$, $\mathcal{AN_A}$ and $\mathcal{ANM_A}\setminus \mathcal{ND_A}$ contain cones or algebras.
See Table~\ref{tab:1} for a summary of the main results.

\begin{table}[h!]
	\begin{tabular}{|c|c|c|}
		\hline
		& $\card(\A)=\cov(\N)$ & $\card(\A)=\add(\N)$ \\ \hline
		Positively $2^{\mathfrak c}$-coneable & $\mathcal{NM_A}$, $\mathcal{NF_A}$, $\mathcal{ND_A}$ (Thm.~\ref{thm:1}) &  \\ \hline
		$2^{\mathfrak c}$-coneable & $\mathcal{ANM_A}\setminus \mathcal{ND_A}$ (Thm.~\ref{thm:7}) &  \\ \hline
		Strongly $\mathfrak c$-algebrable & \begin{tabular}[c]{@{}c@{}} $\mathcal{ANM_A}\setminus \mathcal{ND_A}$ (Thm.~\ref{thm:8}), \\ $\mathcal{ND_A}$ (Thm.~\ref{thm:3_1}) \end{tabular} & $\mathcal{AN_A}$ (Thm.~\ref{thm:3}) \\ \hline
		Strongly $2^{\mathfrak c}$-algebrable & $\mathcal{AN_A}$ (Thm.~\ref{thm:2}) &   \\ \hline
	\end{tabular}
	\caption{
		This table shows the main results of Section~\ref{sec:2}. 
		Each cell in the right column contains a lineability property $\mathcal L$ that will appear in main theorems, the first row shows the cardinality of the directed sets $\mathcal A$ that will be considered, and the remaining cells containing the families of nets indexed by $\mathcal A$ that satisfy $\mathcal L$.
	}
	\label{tab:1}
\end{table}

%In particular, we will prove: (1) there are a directed sets $\mathcal A$ of cardinality $\cov(\N)$ such that $\mathcal{NM}_{\mathcal A}$, $\mathcal{NF}_{\mathcal A}$ and $\mathcal{ND}_{\mathcal A}$ are positively $2^{\mathfrak c}$-coneable, $\mathcal{ANM}_A\setminus \mathcal{ND_A}$ is $2^{\mathfrak c}$-coneable, and $\mathcal{AN}_{\mathcal A}$ is strongly $2^{\mathfrak c}$-algebrable; (2) there is a directed set $\mathcal A$ of cardinality $\add(\N)$ such that $\mathcal{AN}_{\mathcal A}$ is strongly $\mathfrak c$-algebrable; and (3) there are directed sets $\mathcal A$ of cardinality $\cov(\N)$ such that $\mathcal{ND}_{\mathcal A}$, $\mathcal{ANM}_{\mathcal A}$ and $\mathcal{ANM}_A\setminus \mathcal{ND_A}$ are strongly $\mathfrak c$-algebrable.

In Section~3, we will study the coneability and algebrability of the above $\kappa$-sequences of functions (with $\kappa$ a regular cardinal).
For a summary of the main results, see Table~\ref{tab:2}.
Moreover, as an immediate consequence of the main theorems, we will prove that $\text{LIM}_{\add(\N)}(\mathcal L)\supsetneq \mathcal L$, $\text{LIM}_{\non(\N)}(\mathcal L)\supsetneq \mathcal L$, and also that the claims $\text{LIM}_{\add(\N)}(\mathcal L)=\text{LIM}_{\omega_1}(\mathcal L)$ and $\text{LIM}_{\non(\N)}(\mathcal L)=\text{LIM}_{\omega_1}(\mathcal L)$ are independent of ZFC.

\begin{table}[h!]
	\begin{tabular}{|c|c|c|c|}
		\hline
		& ZFC &  ZFC+$\non(\N)=\mathfrak c$ & ZFC+$\add(\N)=\cov(\N)$ \\ \hline
		\begin{tabular}[c]{@{}c@{}} Positively\\ $2^{\mathfrak c}$-coneable \end{tabular} &  & \begin{tabular}[c]{@{}c@{}} $\mathcal{NM}_{\mathfrak c}$, $\mathcal{NF}_{\mathfrak c}$, $\mathcal{ND}_{\mathfrak c}$\\ (\ref{thm:4}) \end{tabular} & \begin{tabular}[c]{@{}c@{}} $\mathcal{NM}_{\add(\N)}$, $\mathcal{NF}_{\add(\N)}$,\\ $\mathcal{ND}_{\add(\N)}$ (\ref{thm:4_1}) \end{tabular} \\ \hline
		$2^{\mathfrak c}$-coneable &  & \begin{tabular}[c]{@{}c@{}} $\mathcal{ANM}_{\mathfrak c} \setminus \mathcal{ND}_{\mathfrak c}$\\ (\ref{thm:8_1}) \end{tabular} & \begin{tabular}[c]{@{}c@{}} $\mathcal{ANM}_{\add(\N)} \setminus \mathcal{ND}_{\add(\N)}$\\ (\ref{thm:8_2}) \end{tabular} \\ \hline
		\begin{tabular}[c]{@{}c@{}} Strongly\\ $\mathfrak c$-algebrable \end{tabular} & \begin{tabular}[c]{@{}c@{}} $\mathcal{AN}_{\add(\N)}$\\ (\ref{thm:6_1}) \end{tabular} & \begin{tabular}[c]{@{}c@{}} $\mathcal{ANM}_{\mathfrak c} \setminus \mathcal{ND}_{\mathfrak c}$\\ (\ref{thm:9_1});\\ $\mathcal{ND}_{\mathfrak c}$\\ (\ref{thm:6}) \end{tabular} & \begin{tabular}[c]{@{}c@{}} $\mathcal{ANM}_{\add(\N)} \setminus \mathcal{ND}_{\add(\N)}$\\ (\ref{thm:9_2});\\ $\mathcal{ND}_{\add(\N)}$\\ (\ref{thm:6_2}) \end{tabular} \\ \hline
		$2^{\non(\N)}$-algebrable & \begin{tabular}[c]{@{}c@{}} $\mathcal{AN}_{\non(\N)}$\\ (\ref{thm:6_1_3}) \end{tabular} &  & \\ \hline
		\begin{tabular}[c]{@{}c@{}} Strongly\\ $2^{\mathfrak c}$-algebrable \end{tabular} &  & \begin{tabular}[c]{@{}c@{}} $\mathcal{AN}_{\mathfrak c}$\\ (\ref{thm:6_1_1}) \end{tabular} & \begin{tabular}[c]{@{}c@{}} $\mathcal{AN}_{\add(\N)}$\\ (\ref{thm:6_1_2}) \end{tabular} \\ \hline
	\end{tabular}
	\caption{
		This table shows the main results of Section~\ref{sec:2}. 
		Each cell in the right column contains a lineability property $\mathcal L$ that will appear in main theorems, the first row shows the axiomatic $\mathbb V$ that will be considered, and the remaining cells containing the families of uncountable sequences that satisfy $\mathcal L$ in $\mathbb V$.
	}
	\label{tab:2}
\end{table}

Section~\ref{sec:4} is dedicated to final remarks, open problems and possible new lines of research.

%\rotatebox{90}{
%	\begin{tabular}{|c|c|c|c|c|}
%		\hline
%		{} & Positively $2^{\mathfrak c}$-coneable & $2^{\mathfrak c}$-coneable & Strongly ${\mathfrak c}$-algebrable & Strongly $2^{\mathfrak c}$-algebrable \\ 
%		\hline
%		$\card(\A)=\cov(\N)$ & $\mathcal{NM_A}$, $\mathcal{NF_A}$, $\mathcal{ND_A}$ (Thm. ) & $\mathcal{ANM_A}\setminus \mathcal{ND_A}$ (Thm. ) & $\mathcal{ND_A}$ (Thm. ), $\mathcal{ANM_A}\setminus \mathcal{ND_A}$ (Thm. )& $\mathcal{AN_A}$ (Thm. ) \\ 
%		\hline
%	\end{tabular}
%}

\section{On nets of functions}\label{sec:2}

In this paper we will frequently use the so-called method of \textit{independent Bernstein sets} which has been proven to be useful in lineability theory (see  \cites{BaBiGl,BaGlPeSe,Na2}).
To do so, let us define what is known as an independent family of subsets of a given set $X$, and the Fichtenholz-Kantorovich-Hausdorff theorem \cites{FiKa,Ha}.

\begin{definition}\label{def:1}
	Let $X$ be a nonempty set. We say that a family $\mathcal Y$ of subsets of $X$ is independent if for any finite sequences $Y_1,\ldots,Y_n\in \mathcal Y$ of distinct sets
	and $\varepsilon_1,\ldots, \varepsilon_n \in \{0,1\}$
	we have $Y_1^{\varepsilon_1}\cap \cdots \cap Y_n^{\varepsilon_n}\neq \emptyset$, where $Y^1:=Y$ and $Y^0:=X\setminus Y$. 
\end{definition}

\begin{theorem}[Fichtenholz-Kantorovich-Hausdorff theorem]\label{thm:FKH}
	If $X$ is a set of infinite cardinality $\kappa$, then there exists a family of independent subsets $\mathcal Y$ of $X$ of cardinality $2^\kappa$ such that each $Y\in \mathcal Y$ has cardinality $\kappa$.
\end{theorem}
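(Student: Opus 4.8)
The plan is to prove the statement for one conveniently chosen set of cardinality $\kappa$ and then transport the resulting family along a bijection onto $X$, since both independence and the cardinalities of the members are preserved by bijections. The convenient index set is Hausdorff's: let
$$
Z:=\{(E,\mathcal S)\colon E\subseteq \kappa \text{ finite and } \mathcal S\subseteq \cP(E)\}.
$$
Since $\kappa$ is infinite, there are exactly $\kappa$ finite subsets $E\subseteq\kappa$, and for each such $E$ there are only finitely many $\mathcal S\subseteq\cP(E)$; hence $\card(Z)=\kappa$. For every $T\subseteq\kappa$ I would define
$$
Y_T:=\{(E,\mathcal S)\in Z\colon T\cap E\in \mathcal S\},
$$
and set $\mathcal Y:=\{Y_T\colon T\subseteq\kappa\}$. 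The argument then splits into three verifications: that $T\mapsto Y_T$ is injective (so $\card(\mathcal Y)=2^\kappa$), that each $Y_T$ has cardinality $\kappa$, and that $\mathcal Y$ is independent.

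Injectivity and the size of each member are routine. If $T\neq T'$, choose $x\in T\triangle T'$, say $x\in T\setminus T'$, and put $E=\{x\}$, $\mathcal S=\{\{x\}\}$: then $T\cap E=\{x\}\in\mathcal S$ while $T'\cap E=\emptyset\notin\mathcal S$, so $(E,\mathcal S)\in Y_T\setminus Y_{T'}$ and thus $Y_T\neq Y_{T'}$, giving $\card(\mathcal Y)=2^\kappa$. For the size of a fixed $Y_T$, note $Y_T\subseteq Z$ gives $\card(Y_T)\le\kappa$; conversely, for each of the $\kappa$ finite sets $E\subseteq\kappa$ the pair $(E,\{T\cap E\})$ lies in $Y_T$, and these pairs are pairwise distinct, so $\card(Y_T)\ge\kappa$.

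The crux is the independence condition, and this is where I expect the only real work. Given distinct $T_1,\dots,T_n\subseteq\kappa$ and $\varepsilon_1,\dots,\varepsilon_n\in\{0,1\}$, I must exhibit a point of $Y_{T_1}^{\varepsilon_1}\cap\cdots\cap Y_{T_n}^{\varepsilon_n}$. First I would \emph{separate} the $T_i$ on a finite set: for each pair $i\neq j$ pick an element of the nonempty set $T_i\triangle T_j$ and let $E$ be the finite collection of all elements so chosen; then the traces $s_i:=T_i\cap E$ are pairwise distinct subsets of $E$. Next I set $\mathcal S:=\{s_i\colon \varepsilon_i=1\}$ and consider the point $p:=(E,\mathcal S)\in Z$. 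Because the $s_i$ are distinct, for each $i$ one has $p\in Y_{T_i}$ iff $T_i\cap E=s_i\in\mathcal S$ iff $\varepsilon_i=1$; equivalently $p\in Y_{T_i}^{\varepsilon_i}$ for every $i$, so the intersection is nonempty. The essential point to get right is exactly that $E$ is chosen large enough to make the traces $s_i$ distinct, since otherwise the prescription defining $\mathcal S$ would be inconsistent (a single trace would be required both to belong to and to avoid $\mathcal S$).

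Finally, fixing any bijection $\phi\colon Z\to X$ (which exists because $\card(Z)=\kappa=\card(X)$) and replacing each $Y_T$ by $\phi(Y_T)$ transfers everything to $X$: a bijection preserves intersections and carries complements in $Z$ to complements in $X$, so the image family is independent, consists of $2^\kappa$ members each of cardinality $\kappa$, as required.
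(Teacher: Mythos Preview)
Your proof is correct; it is essentially Hausdorff's classical construction. Note, however, that the paper does not supply its own proof of this theorem: it is stated as a known result with references to Fichtenholz--Kantorovich and Hausdorff, so there is no ``paper's proof'' to compare against. Your argument stands on its own as a complete and standard verification.
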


Let $X$ be a Polish space, that is, a separable completely metrizable topological space.
It is known that there are $\mathfrak c$-many disjoint Bernstein subsets of $X$\footnote{If $X$ is an arbitrary Polish space, $\text{Perf}$ is the family of all perfect subsets of $X$, and 
	$\{\la P_\xi,r_\xi \ra \colon \xi<\co\}$ an enumeration of $\text{Perf}\times\R$, choose inductively $\{x_\xi\colon \xi<\co\}\subset X$ so that $x_\xi\in P_\xi\setminus\{x_\eta\colon \eta<\xi\}$ (recall that the cardinality of a perfect subset of a Polish space is $\mathfrak c$, see \cite[corollary~6.3]{Ke}).
	Then the sets $B_r=\{x_\xi\colon r_\xi=r\}$ are disjoint and Bernstein sets.} (see \cite[pp.~105, exercise~1]{Ci}).
Recall that a Bernstein set $B$ in a Polish space $X$ satisfies that $B$ and $X\setminus B$ intersect every perfect subset of $X$.

Given a Polish space $X$, fix $\{B_\xi^X \colon \xi<\mathfrak c \}$ a family of $\mathfrak c$-many pairwise disjoint Bernstein subsets of $X$.
By the Fichtenholz-Kantorovich-Hausdorff theorem, let $\{C_\alpha \colon \alpha< 2^{\mathfrak c} \}$ be an independent family of subsets of $\mathfrak c$ of cardinality $2^{\mathfrak c}$.
For every $\alpha<2^{\mathfrak c}$, let us define
	\begin{equation}\label{equ:1}
		V_\alpha^X:=\bigcup_{\xi \in C_\alpha} B_\xi^X.
	\end{equation}
Observe that $V_\alpha^X$ is also a Bernstein set in $X$.
In the main results of the paper $X$ will be either the real line endowed with the Euclidean metric or any measure $0$ Cantor subset of $\R$
%Cantor set of measure $0$ of $\R$ 
with the inherited topology of $\R$.

Throughout this section we will use the following binary relation.
Let $X$ be a family of subsets of $\R$ such that $X$ has infinite cardinality, and consider $\mathcal A$ the set of all finite subsets of $X$.
We will endow $\mathcal A$ with the following binary relation$\preccurlyeq$:
	\begin{itemize}
		\item[$\circ$]\label{circ} given $A,B\in \mathcal A$ we denote $A\preccurlyeq B$ provided that $\bigcup A\subseteq \bigcup B$.
	\end{itemize}
It is obvious that $\preccurlyeq$ is a preorder on $\mathcal A$ and any pair of elements of $\mathcal A$ has an upper bound in $\mathcal A$.
Indeed, if $A,B\in \mathcal A$, then $A\cup B\in \mathcal A$ by definition, and $\bigcup A, \bigcup B\subseteq \bigcup (A\cup B)$, i.e., $A\preccurlyeq A\cup B$ and $B\preccurlyeq A\cup B$.
Moreover, notice that $\mathcal A$ is not linearly ordered and, since $\mathcal A$ is infinite, it is known that  
$\card(\mathcal A)=\card(X)$.

We are now ready to study the lineability of the families $\mathcal{NM_A}$, $\mathcal{NF_A}$, $\mathcal{AN_A}$, $\mathcal{ND_A}$ and $\mathcal{ANM_A}$.
We begin by studying the existence of positive cones of large dimension inside the families of $\mathcal{NM_A}$, $\mathcal{NF_A}$ and $\mathcal{ND_A}$, where $\mathcal A$ is a directed set of cardinality $\cov(\N)$ but not a cardinal number.
To do so, we present the following general Lemma~\ref{lem:7} for arbitrary directed sets which is also used in Section~\ref{sec:3}.

\begin{lemma}\label{lem:7}
	Let $\mathfrak C\subset \R$ be a Cantor set of measure $0$, $\{V_{\alpha}^{\mathfrak C} \colon \alpha<2^{\mathfrak c} \}$ the sets defined in \eqref{equ:1} at the beginning of Section~\ref{sec:2}, $\mathcal A$ a nonempty directed set and $\left(\sigma_A\right)_{A\in \mathcal A}$ a net of subsets of $\R$.
	If $\sigma_A\cap \mathfrak C=\emptyset$ for every $A\in \mathcal A$, then the nets in 
		$$
		\mathcal B:=\left\{ \left(\chi_{V_\alpha^{\mathfrak C} \cup \sigma_A} \right)_{A\in \mathcal A} \colon \alpha<2^{\mathfrak c} \right\}
		$$ 
	are linearly independent.
	
	Furthermore, if $\sigma_A \in \mathcal N$ is contained in $[0,1]$ for every $A\in \mathcal A$, $\sigma_A\subseteq \sigma_B$ when $A,B\in \mathcal A$ are such that $A\leq B$, $(\chi_{\sigma_A})_{A\in \mathcal A}$ converges pointwise everywhere to $\chi_I$ where $I\subset [0,1]$ is some nondegenerate interval disjoint from $\mathfrak C$, and $\mathfrak C\subset [0,1]$, then the positive cone generated by $\mathcal B$ is contained in $\mathcal{NM_A}$, $\mathcal{NF_A}$ and $\mathcal{ND_A}$.
\end{lemma}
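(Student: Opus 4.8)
The plan is to treat the two assertions separately: linear independence rests on the block structure of the sets $V_\alpha^{\mathfrak{C}}$ inherited from the independent family $\{C_\alpha\colon\alpha<2^{\mathfrak{c}}\}$, while the cone membership rests on a clean additive decomposition of an arbitrary cone element. For the first part, suppose a finite combination $\sum_{i=1}^n c_i\,(\chi_{V_{\alpha_i}^{\mathfrak{C}}\cup\sigma_A})_{A\in\mathcal A}=0$ with $\alpha_1,\dots,\alpha_n$ distinct. Fixing $j$, I would apply the independence of $\{C_\alpha\}$ (Definition~\ref{def:1}) with $\varepsilon_j=1$ and $\varepsilon_i=0$ for $i\neq j$ to obtain an index $\xi_j\in C_{\alpha_j}\setminus\bigcup_{i\neq j}C_{\alpha_i}$. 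Since the Bernstein blocks $B_\xi^{\mathfrak{C}}$ are pairwise disjoint and $V_\alpha^{\mathfrak{C}}=\bigcup_{\xi\in C_\alpha}B_\xi^{\mathfrak{C}}$ by \eqref{equ:1}, any point $x_j\in B_{\xi_j}^{\mathfrak{C}}$ lies in $V_{\alpha_j}^{\mathfrak{C}}$ but in no $V_{\alpha_i}^{\mathfrak{C}}$ with $i\neq j$; and as $x_j\in\mathfrak{C}$ with $\sigma_A\cap\mathfrak{C}=\emptyset$, it lies in no $\sigma_A$ either. Evaluating the vanishing combination at $x_j$ for any fixed $A$ collapses to $c_j=0$, which is the heart of this half.

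For the cone claim I retain the standing hypothesis $\sigma_A\cap\mathfrak{C}=\emptyset$ and fix a positive-cone element $g_A:=\sum_{i=1}^n a_i\,\chi_{V_{\alpha_i}^{\mathfrak{C}}\cup\sigma_A}$ with $a_i>0$ and the $\alpha_i$ distinct. Because $V_{\alpha_i}^{\mathfrak{C}}\subseteq\mathfrak{C}$ is disjoint from $\sigma_A$, I would rewrite $g_A=h+S\,\chi_{\sigma_A}$, where $h:=\sum_{i=1}^n a_i\,\chi_{V_{\alpha_i}^{\mathfrak{C}}}\geq 0$ is supported on the null set $\mathfrak{C}$ and $S:=\sum_{i=1}^n a_i>0$. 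Each $V_{\alpha_i}^{\mathfrak{C}}$, although a Bernstein set, is Lebesgue measurable since it is a subset of the null set $\mathfrak{C}$ (completeness of $\lambda$); hence $g_A$ is measurable and $h=0$ a.e. This decomposition is the key device: it separates the measure-theoretic behavior, carried by $\chi_{\sigma_A}$, from the part responsible for linear independence, carried by $h$, which is invisible to every integral.

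I would then verify the three memberships in one sweep. Monotonicity $0\leq g_A\leq g_B$ for $A\leq B$ follows from $\sigma_A\subseteq\sigma_B$; the net converges pointwise everywhere to $f:=h+S\,\chi_I$, an integrable function equal a.e.\ to $S\,\chi_I$, because $\chi_{\sigma_A}\to\chi_I$ everywhere; and $\int_{[0,1]}g_A\,d\lambda=S\,\lambda(\sigma_A)=0$ for every $A$ while $\int_{[0,1]}f\,d\lambda=S|I|>0$ since $I$ is nondegenerate. This gives membership in $\mathcal{NM_A}$ (failure of the MCT) and, since $\liminf_{A\in\mathcal A}g_A=\lim_{A\in\mathcal A}g_A=f$, the strict inequality defining $\mathcal{NF_A}$. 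For $\mathcal{ND_A}$, the integrable function $G:=2S\,\chi_{[0,1]}$ dominates all $g_A$, and $\int_{[0,1]}|g_A-f|\,d\lambda=S\,\lambda(\sigma_A\triangle I)=S|I|\neq 0$ for every $A$, so the DCT conclusion fails.

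The main obstacle is conceptual rather than computational: one must recognize that placing the separating sets $V_\alpha^{\mathfrak{C}}$ inside a null Cantor set makes them simultaneously Lebesgue measurable and integral-negligible, so that all three pathologies are driven purely by $\sigma_A$, while the independent family keeps the full nets linearly independent. Once the decomposition $g_A=h+S\,\chi_{\sigma_A}$ is established, each defining condition of $\mathcal{NM_A}$, $\mathcal{NF_A}$, and $\mathcal{ND_A}$ reduces to the single observation that each $\sigma_A$ is null whereas its pointwise limit $I$ has positive measure.
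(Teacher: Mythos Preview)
Your proof is correct and follows essentially the same approach as the paper's: the linear independence argument via the independent family $\{C_\alpha\}$ and disjoint Bernstein blocks is identical, and your verification of membership in $\mathcal{NM_A}$, $\mathcal{NF_A}$, $\mathcal{ND_A}$ matches the paper's computations. Your explicit decomposition $g_A=h+S\,\chi_{\sigma_A}$ is a tidy organizational device that the paper leaves implicit, but the substance is the same.
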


\begin{proof}
	Let $\alpha_1<\cdots<\alpha_n<2^{\mathfrak c}$ be distinct, $\lambda_1,\ldots,\lambda_n\in \mathbb R$ and define the net of functions
		$$
		(F_A)_{A\in \mathcal A}:=\sum_{i=1}^n \lambda_i (\chi_{V_{\alpha_i}^{\mathfrak C} \cup \sigma_A})_{A\in \mathcal A}.
		$$
	
	For the first part, assume that $(F_A)_{A\in \mathcal A}\equiv 0$.
	Then, for every $A\in \mathcal A$, we have that $\sum_{i=1}^n \lambda_i \chi_{V_{\alpha_i}^{\mathfrak C} \cup \sigma_A}=0$.
	By definition of an independent family of subsets, there exists $\xi_0 \in C_{\alpha_1} \setminus \bigcup_{i=2}^n C_{\alpha_i}$.
	Hence, by taking any $x\in B_{\xi_0}^{\mathfrak C}$ we have that $x\in V_{\alpha_1}^{\mathfrak C}\setminus \bigcup_{i=2}^n V_{\alpha_i}^{\mathfrak C}$ since the sets in $\{B_\xi^{\mathfrak C}\colon \xi<\mathfrak c \}$ are pairwise disjoint subsets of $\mathfrak C$.
	Therefore, as $\sigma_A\cap \mathfrak C=\emptyset$ for each $A\in \mathcal A$, $0=\sum_{i=1}^n \lambda_i \chi_{V_{\alpha_i}^{\mathfrak C} \cup \sigma_A}(x)=\lambda_1$ for every $A\in \mathcal A$.
	By repeating the same argument with $\sum_{i=2}^n \lambda_i \chi_{V_{\alpha_i}^{\mathfrak C} \cup \sigma_A}=0$, we arrive at $\lambda_i=0$ for every $i\in \{1,\ldots,n \}$.
	
	For the second part, assume for the rest of the proof that $\lambda_i>0$ for every $i\in \{1,\ldots,n \}$.
	Since the Lebesgue measure in $\R$ is complete and $\mathfrak C$ is a null set, each $V_\alpha^{\mathfrak C}\subset \mathfrak C$ is a null set. 
	Therefore, each $\chi_{V_\alpha^{\mathfrak C} \cup \sigma_A}$ is measurable since $V_\alpha^{\mathfrak C} \cup \sigma_A\in \mathcal N$.
	Thus, $F_A$ is measurable for every $A\in \mathcal A$.
	Moreover, observe that $0\leq F_A\leq F_B$ when $A\leq B$, and $F_A\leq \sum_{i=1}^n \lambda_i \chi_{V_{\alpha_i}^{\mathfrak C} \cup I}$ for every $A\in \mathcal A$.
	It is obvious that $\sum_{i=1}^n \lambda_i \chi_{V_{\alpha_i}^{\mathfrak C} \cup I}$ is integrable but also
		$$
		\liminf_{A\in \mathcal A} \int_{[0,1]} F_A d\lambda=\lim_{A\in \mathcal A} \int_{[0,1]} F_A d\lambda = 0,
		$$
	and
		$$
		\int_{\left[0,1 \right]} \liminf_{A\in \mathcal A} F_A d\lambda =\int_{\left[0,1 \right]} \lim_{A\in \mathcal A} F_A d\lambda = \int_{\left[0,1 \right]} \sum_{i=1}^m \lambda_i \chi_{V_{\alpha_i}^{\mathfrak C} \cup I} d\lambda=\lambda(I)\sum_{i=1}^m \lambda_i>0.
		$$
	Finally, 
		\begin{align*}
			\lim_{A\in \mathcal A}\int_{[0,1]} \left|F_A-\sum_{i=1}^m \lambda_i \chi_{V_\alpha^{\mathfrak C} \cup I}\right| d\lambda & =\lim_{A\in \mathcal A}\int_{\left[0,1 \right]} \sum_{i=1}^m \lambda_i \chi_{I} d\lambda=\lambda(I)\sum_{i=1}^m \lambda_i>0,
		\end{align*}
	Hence, $(F_A)_{A\in \mathcal A}\in \mathcal{NM}_{\mathcal A},\ \mathcal{NF}_{\mathcal A},\ \mathcal{ND_A}$.
\end{proof}

\begin{theorem}\label{thm:1}
	There exists a directed set $\mathcal A$ that is not a cardinal number with $\card(\mathcal A)=\cov(\N)$ such that $\mathcal{NM}_{\mathcal A}$, $\mathcal{NF}_{\mathcal A}$ and $\mathcal{ND}_{\mathcal A}$ in $\left(\mathbb R^{[0,1]} \right)^{\mathcal A}$ are positively 
	$2^{\mathfrak c}$-coneable.
\end{theorem}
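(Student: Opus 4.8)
The plan is to apply Lemma~\ref{lem:7} to a carefully chosen directed set $\mathcal A$ and net of null sets $(\sigma_A)_{A\in\mathcal A}$, arranged so that every hypothesis of its second part holds and so that the cardinality bookkeeping yields $\card(\mathcal A)=\cov(\N)$. The only creative choices are the cover used to build $\mathcal A$ and the assignment $\sigma_A:=\bigcup A$; the rest is verification against the hypotheses of Lemma~\ref{lem:7}.

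First I would fix a measure zero Cantor set $\mathfrak C\subset[0,1]$ (for instance the middle-thirds set). Since $\mathfrak C$ is nowhere dense, $[0,1]\setminus\mathfrak C$ is open and dense, so I can choose a nondegenerate interval $I\subset[0,1]$ with $I\cap\mathfrak C=\emptyset$. By the definition of $\cov(\N)$ there is a family of null sets $\{N_\beta:\beta<\cov(\N)\}$ with $\bigcup_{\beta<\cov(\N)}N_\beta=[0,1]$; replacing each $N_\beta$ by $N_\beta\cap I$ I obtain null subsets of $I$ whose union is $I$. Transferring a cover along the affine bijection $[0,1]\to I$ shows that $I$ cannot be covered by fewer than $\cov(\N)$ null sets, so after discarding repetitions I may assume the sets are distinct and that $X:=\{N_\beta:\beta<\cov(\N)\}$ has cardinality exactly $\cov(\N)$.

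Next I would let $\mathcal A$ be the collection of all finite subsets of $X$ equipped with the preorder $\preccurlyeq$ introduced before this theorem, so that $\mathcal A$ is a directed set with $\card(\mathcal A)=\card(X)=\cov(\N)$; since $\preccurlyeq$ is not linear, $\mathcal A$ is not order isomorphic to a cardinal number, as required. For each $A\in\mathcal A$ I set $\sigma_A:=\bigcup A$. Then $\sigma_A$ is a finite union of null subsets of $I$, hence $\sigma_A\in\mathcal N$, $\sigma_A\subseteq I\subseteq[0,1]$ and $\sigma_A\cap\mathfrak C=\emptyset$; moreover $A\preccurlyeq B$ gives $\sigma_A\subseteq\sigma_B$. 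It then remains to check that $(\chi_{\sigma_A})_{A\in\mathcal A}$ converges pointwise everywhere to $\chi_I$: if $x\notin I$ then $\chi_{\sigma_A}(x)=0$ for all $A$ because $\sigma_A\subseteq I$, while if $x\in I$ then $x\in N_\beta$ for some $\beta$, and for every $A\in\mathcal A$ with $\{N_\beta\}\preccurlyeq A$ one has $N_\beta\subseteq\bigcup A=\sigma_A$, whence $x\in\sigma_A$; thus the net $(\chi_{\sigma_A}(x))_{A\in\mathcal A}$ is eventually $1$.

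With these verifications in place, both parts of Lemma~\ref{lem:7} apply to $\mathfrak C$, $\{V_\alpha^{\mathfrak C}:\alpha<2^{\mathfrak c}\}$ from \eqref{equ:1}, the directed set $\mathcal A$, and the net $(\sigma_A)_{A\in\mathcal A}$: the family $\mathcal B=\{(\chi_{V_\alpha^{\mathfrak C}\cup\sigma_A})_{A\in\mathcal A}:\alpha<2^{\mathfrak c}\}$ consists of linearly independent nets, so in particular $\card(\mathcal B)=2^{\mathfrak c}$, and the positive cone generated by $\mathcal B$ is contained in $\mathcal{NM}_{\mathcal A}$, $\mathcal{NF}_{\mathcal A}$ and $\mathcal{ND}_{\mathcal A}$. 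This is precisely positive $2^{\mathfrak c}$-coneability of the three families, completing the argument. The only genuinely delicate step is the cardinality arithmetic of the covering construction, namely guaranteeing that $X$ has cardinality exactly $\cov(\N)$ rather than something smaller, for which the affine invariance of the null ideal on intervals of positive measure is the key fact; everything else reduces to a direct check of the hypotheses of Lemma~\ref{lem:7}.
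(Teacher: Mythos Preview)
Your proof is correct and follows essentially the same approach as the paper: choose a null Cantor set $\mathfrak C\subset[0,1]$ and a nondegenerate interval $I\subset[0,1]$ disjoint from it, cover $I$ by $\cov(\N)$ many null sets, let $\mathcal A$ be the finite subsets of that cover under $\preccurlyeq$, set $\sigma_A=\bigcup A$, and invoke Lemma~\ref{lem:7}. The paper simply makes the concrete choices $I=[0,\tfrac12]$ and $\mathfrak C\subset(\tfrac12,1]$, whereas you work with a generic $\mathfrak C$ and an interval in its complement and spell out the cardinality and pointwise-convergence checks more fully; otherwise the two arguments coincide.
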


\begin{proof}
	Take $X\subset \mathcal N$ witnessing $\cov(\N)$ such that $\bigcup X=\left[0,\frac{1}{2} \right]$.
	Let $\mathcal A$ be the set of all finite subsets of $X$ endowed with the binary relation $\preccurlyeq$ defined in $\circ$ before Lemma~\ref{lem:7}.
	Also, let $\mathfrak C$ be a Cantor set of measure $0$ contained in $\left(\frac{1}{2},1\right]$.
	
	For every $\alpha<2^{\mathfrak c}$, let us define the net $(f_A^\alpha)_{A\in \mathcal A}$ 
	in the following way: for every $A\in \mathcal A$,
		$$
		f_A^\alpha:=\chi_{V_\alpha^{\mathfrak C} \cup \left(\bigcup A \right)},
		$$
	where the sets $V_\alpha^{\mathfrak C}$ are defined in \eqref{equ:1} at the beginning of Section~\ref{sec:2}.
		
	Since $\sigma_A:=\bigcup A\in \N$ for every $A\in \mathcal A$, $\sigma_A\subseteq \sigma_B$ provided that $A,B\in \mathcal A$ are such that $A\preccurlyeq B$, $(\chi_{\sigma_A})_{A\in \mathcal A}$ converges pointwise everywhere to $\chi_I$ where $I:=\left[0,\frac{1}{2} \right]$, and $\sigma_A\cap \mathfrak C=\emptyset$ for every $A\in \mathcal A$, we have the desired result by Lemma~\ref{lem:7}.
\end{proof}

The following main result, Theorem~\ref{thm:7}, shows the existence of cones of large dimension inside the family $\mathcal{ANM_A}\setminus \mathcal{ND_A}$, where $\mathcal A$ is a directed set of cardinality $\add(\N)$ but not a cardinal number.
First let us present general Lemma~\ref{lem:8} for arbitrary directed sets which will be used in Theorem~\ref{thm:7} as well as in Section~\ref{sec:3}.

\begin{lemma}\label{lem:8}
	Let $\mathfrak C\subset \R$ be a Cantor set of measure $0$, $\{V_\alpha^{\mathfrak C} \colon \alpha<2^{\mathfrak c} \}$ the sets defined in \eqref{equ:1} at the beginning of Section~\ref{sec:2}, $\mathcal A$ a nonempty directed set and $\left(\sigma_A\right)_{A\in \mathcal A}$ a net of subsets of $\R$.
	If $\sigma_A\cap \mathfrak C=\emptyset$ for every $A\in \mathcal A$, the nets in $\mathcal B:=\left\{ \left(f_A^\alpha \right)_{A\in \mathcal A} \colon \alpha<2^{\mathfrak c} \right\}$, where
		$$
		f_A^\alpha (x):=\begin{cases}
		\frac{1}{x}\cdot \chi_{V_\alpha^{\mathfrak C} \cup \sigma_A}(x) & \text{if } x\neq 0,\\
		0 & \text{if } x=0,
		\end{cases}
		$$ 
	for every $A\in \mathcal A$ and $\alpha<2^{\mathfrak c}$, are linearly independent.
	
	Furthermore, if $\sigma_A \in \mathcal N$ is contained in $[0,1]$ for every $A\in \mathcal A$, $\sigma_A\subseteq \sigma_B$ when $A,B\in \mathcal A$ are such that $A\leq B$, $(\chi_{\sigma_A})_{A\in \mathcal A}$ converges pointwise everywhere to $\chi_I$ where $I\subset (0,1]$ is a nondegenerate interval disjoint from $\mathfrak C$ and left endpoint 
	$0$,
	and $\mathfrak C\subset [0,1]$, then the positive (resp. negative) cone generated by $\mathcal B$ is contained in $\mathcal{ANM_A}\setminus \mathcal{ND_A}$.
\end{lemma}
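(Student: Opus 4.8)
The plan is to mirror the proof of Lemma~\ref{lem:7}, the only structural change being that the bounded generators are replaced by $\frac{1}{x}\chi_{V_\alpha^{\mathfrak C}\cup\sigma_A}$, whose singularity at the left endpoint $0$ of $I$ is the whole point: it will force the limit out of $L^1$. For the linear independence part I would argue verbatim as before. Suppose $\sum_{i=1}^n\lambda_i(f_A^{\alpha_i})_{A\in\mathcal A}\equiv 0$ with $\alpha_1<\cdots<\alpha_n$. By independence of $\{C_\alpha\}$ pick $\xi_0\in C_{\alpha_1}\setminus\bigcup_{i\ge 2}C_{\alpha_i}$ and choose $x\in B_{\xi_0}^{\mathfrak C}\setminus\{0\}$ (possible since $B_{\xi_0}^{\mathfrak C}$ is uncountable), so that $x\in V_{\alpha_1}^{\mathfrak C}\setminus\bigcup_{i\ge 2}V_{\alpha_i}^{\mathfrak C}$ and $x\notin\sigma_A$ because $\sigma_A\cap\mathfrak C=\emptyset$. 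Evaluating the identity at such $x$ yields $\lambda_1\frac{1}{x}=0$, hence $\lambda_1=0$ since $\frac{1}{x}\neq 0$; iterating peels off every coefficient. The single new point over Lemma~\ref{lem:7} is the insistence that $x\neq 0$.

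For the cone assertion, assume first $\lambda_i>0$ for all $i$, put $\Lambda:=\sum_{i=1}^n\lambda_i>0$ and $F_A:=\sum_i\lambda_i f_A^{\alpha_i}$. Each $V_{\alpha_i}^{\mathfrak C}\cup\sigma_A\in\mathcal N$, so every $F_A$ is measurable. Since $V_{\alpha_i}^{\mathfrak C}\subseteq\mathfrak C$ and $\sigma_A\cap\mathfrak C=\emptyset$, the two indicators are disjoint and, for $x\neq 0$, $F_A(x)=\frac{1}{x}\sum_i\lambda_i\chi_{V_{\alpha_i}^{\mathfrak C}}(x)+\frac{\Lambda}{x}\chi_{\sigma_A}(x)$. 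Using the hypothesis $\chi_{\sigma_A}\to\chi_I$ pointwise everywhere, $F_A$ converges everywhere to $F:=\frac{1}{x}\sum_i\lambda_i\chi_{V_{\alpha_i}^{\mathfrak C}}+\frac{\Lambda}{x}\chi_I$ (at $x=0$ both sides are $0$), and $F$ is measurable, being a combination of $\frac{1}{x}$ (measurable off $0$) with indicators of the measurable sets $V_{\alpha_i}^{\mathfrak C}$ (null) and $I$ (an interval). I would also record that monotonicity of $(\sigma_A)$ together with pointwise convergence to $\chi_I$ forces $\bigcup_A\sigma_A=I$, hence $\sigma_A\subseteq I$ for every $A$.

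Two observations then conclude. First, since $I$ has left endpoint $0$, the summand $\frac{\Lambda}{x}\chi_I$ satisfies $\int_I\frac{\Lambda}{x}\,d\lambda=\infty$ while the remaining summand is supported on the null set $\mathfrak C$; thus $F$ is measurable but not integrable, and because a.e.\ limits are unique, no integrable function can be the a.e.\ limit of $(F_A)$. Hence condition~(2) in the definition of $\mathcal{ND_A}$ fails and $(F_A)\notin\mathcal{ND_A}$. Second, using $\sigma_A\subseteq I$ gives $|F-F_A|=\frac{\Lambda}{x}\chi_{I\setminus\sigma_A}$, and for all sufficiently small $\varepsilon>0$ the superlevel set $\{x:|F-F_A|\ge\varepsilon\}$ equals $I\setminus\sigma_A$, of measure $\lambda(I)>0$ for every $A$ because each $\sigma_A$ is null. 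Therefore $\lambda(\{|F_A-F|\ge\varepsilon\})\not\to 0$: the net converges everywhere (so a.e.) to the measurable $F$ but not in measure, i.e.\ $(F_A)\in\mathcal{ANM_A}$. Combining, $(F_A)\in\mathcal{ANM_A}\setminus\mathcal{ND_A}$.

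The negative cone is obtained by the identical computation with each $\lambda_i$ replaced by $-\lambda_i$ (so $\Lambda<0$ and $F\to-\infty$ near $0$), which flips the sign of $F$ but leaves every measure estimate intact. The one place demanding care is the measure bookkeeping: nullity of each $\sigma_A$ must be exploited twice, once to keep $\lambda(I\setminus\sigma_A)=\lambda(I)$ bounded away from $0$ uniformly in $A$ (defeating convergence in measure), and once, through the non-integrable singularity of $\frac{1}{x}$ at $0$, to keep the everywhere limit out of $L^1$ (defeating membership in $\mathcal{ND_A}$). I expect this double use of the same structural feature, rather than any single delicate estimate, to be the crux.
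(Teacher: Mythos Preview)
Your proof is correct and follows essentially the same route as the paper. The linear-independence argument is identical, and for the cone part both you and the paper verify measurability, pointwise convergence to a measurable but non-integrable $F$ (using $\int_I \frac{1}{x}\,d\lambda=\infty$), and failure of convergence in measure via $\lambda(\{|F_A-F|\ge\varepsilon\})=\lambda(I)$. Your presentation is slightly more explicit in two places: you spell out the disjoint decomposition $F_A=\frac{1}{x}\sum_i\lambda_i\chi_{V_{\alpha_i}^{\mathfrak C}}+\frac{\Lambda}{x}\chi_{\sigma_A}$ and you derive $\sigma_A\subseteq I$ from monotonicity plus pointwise convergence, which the paper leaves implicit; but these are cosmetic differences, not a different strategy.
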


\begin{proof}
	Let $\alpha_1,\ldots,\alpha_n<2^{\mathfrak c}$ be distinct, $\lambda_1,\ldots,\lambda_n\in \R$ and define the net
		$$
		(F_A)_{A\in \mathcal A}:=\sum_{i=1}^n \lambda_i (f_A^{\alpha_i})_{A\in \mathcal A}.
		$$
	
	For the first part, assume that $(F_A)_{A\in \mathcal A}\equiv 0$.
	By definition of a family of independent subsets, there exists $\xi_0\in C_{\alpha_1}\setminus \bigcup_{i=2}^n C_{\alpha_i}$, i.e., $B_{\xi_0}^{\mathfrak C}\subset V_{\alpha_1}^{\mathfrak C}\setminus \bigcup_{i=2}^n V_{\alpha_i}^{\mathfrak C}$.
	Hence, by taking any $x\in B_{\xi_0}^{\mathfrak C}\setminus \{0\}$ we have that $F_A(x)=\lambda_1\frac{1}{x}=0$ since $\sigma_A\cap \mathfrak C=\emptyset$ for every $A\in \mathcal A$, that is, $\lambda_1=0$.
	By repeating similar arguments we have that $\lambda_i=0$ for every $i\in \{1,\ldots,n \}$.
	
	For the second part, assume that $\lambda_i>0$ (resp. $\lambda_i<0$) for every $i\in \{1,\ldots,n \}$.
	Since $V_\alpha^{\mathfrak C}\cup \sigma_A\in \N$ for every $A\in \mathcal A$ and $\alpha<2^{\mathfrak c}$, we have that each $f_A^\alpha$ is measurable.
	(Recall that $\mathfrak C\in \N$ and the Lebesgue measure is complete.)
	Therefore, $F_A$ is measurable for every $A\in \mathcal A$.
	Moreover, by Remark~\ref{rem:3}, note that $(F_A)_{A\in \mathcal A}$ converges pointwise everywhere to
		$$
		F(x):=\begin{cases}
		\sum_{i=1}^n \lambda_i \frac{1}{x}\cdot \chi_{V_\alpha^{\mathfrak C} \cup I}(x) & \text{if } x\neq 0,\\
		0 & \text{if } x=0.
		\end{cases}
		$$
	It is obvious that $F$ is a measurable function and, for any $\varepsilon\in \left(0,\sum_{i=1}^m \lambda_i\right)$ (resp. $\varepsilon\in \left(0,-\sum_{i=1}^m \lambda_i\right)$),
		\begin{align*}
			& \lim_{A\in \mathcal A} \lambda\left(\left\{x\in [0,1]\colon \left|F_A(x)-F(x)\right| \geq \varepsilon  \right\} \right)\\
			& =\lim_{A\in \mathcal A} \lambda\left(\left\{x\in I \colon \frac{1}{x} \left|\sum_{i=1}^m \lambda_i \chi_{I}(x)\right| \geq \varepsilon  \right\} \right)=\lambda(I)>0.
		\end{align*}
	However, %\ck{The equality ''$=\lambda(I)$'' is false if 0 is the left endpoint of $I$.\\}
	$F$ is not integrable.
	Indeed, since $I$ is a nondegenerate interval with left endpoint $0$, we have that
		$$
		\int_{[0,1]} |F| d\lambda = \left|\sum_{i=1}^n \lambda_i \right| \int_{I} \frac{1}{x} d\lambda = \infty.
		$$ 
	Thus, $(F_A)_{A\in \mathcal A}\in \mathcal{ANM_A}\setminus \mathcal{ND_A}$.
\end{proof}

\begin{theorem}\label{thm:7}
	There exists a directed set $\mathcal A$ that is not a cardinal number with $\card(\mathcal A)=\cov(\N)$ such that $\mathcal{ANM_A} \setminus \mathcal{ND}_{\mathcal A}$ in $\left(\mathbb R^{[0,1]} \right)^{\mathcal A}$ is $2^{\mathfrak c}$-coneable.
\end{theorem}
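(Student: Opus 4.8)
The plan is to mirror the proof of Theorem~\ref{thm:1}, invoking Lemma~\ref{lem:8} in place of Lemma~\ref{lem:7}; the only genuinely new point is that the interval $I$ must now be chosen with left endpoint $0$, which is exactly the feature Lemma~\ref{lem:8} exploits to force the pointwise limit to be non-integrable (and hence to land outside $\mathcal{ND_A}$, giving membership in $\mathcal{ANM_A}\setminus\mathcal{ND_A}$).

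First I would fix the nondegenerate interval $I:=\left(0,\frac12\right]\subset (0,1]$, whose left endpoint is $0$, and choose a family $X\subset \N$ witnessing $\cov(\N)$ with $\bigcup X=I$. Let $\mathcal A$ be the set of all finite subsets of $X$, endowed with the binary relation $\preccurlyeq$ defined in $\circ$ before Lemma~\ref{lem:7}. As recorded in the text, $\mathcal A$ is then a directed set that is not linearly ordered (so in particular not a cardinal number), and since $X$ is infinite we have $\card(\mathcal A)=\card(X)=\cov(\N)$. Next I would take a Cantor set $\mathfrak C\subset\left(\frac12,1\right]$ of measure $0$, so that $\mathfrak C\subset[0,1]$ and $\mathfrak C\cap I=\emptyset$, and set $\sigma_A:=\bigcup A$ for each $A\in\mathcal A$.

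The heart of the argument is then simply to verify the hypotheses of Lemma~\ref{lem:8} for this data. Each $\sigma_A$ is a finite union of members of $\N$, hence $\sigma_A\in\N$, and $\sigma_A\subseteq\bigcup X=I\subseteq[0,1]$; since $\sigma_A\subseteq I$ and $\mathfrak C\subset\left(\frac12,1\right]$ is disjoint from $I$, we get $\sigma_A\cap\mathfrak C=\emptyset$ for every $A\in\mathcal A$. Monotonicity $\sigma_A\subseteq\sigma_B$ whenever $A\preccurlyeq B$ is immediate from the definition of $\preccurlyeq$. Finally $(\chi_{\sigma_A})_{A\in\mathcal A}$ converges pointwise everywhere to $\chi_I$: if $x\notin I$ then $x\notin\sigma_A$ for all $A$, while if $x\in I$ there is some $Y\in X$ with $x\in Y$, whence $\chi_{\sigma_A}(x)=1$ for every $A\in\mathcal A$ with $\{Y\}\preccurlyeq A$.

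With all hypotheses in place, Lemma~\ref{lem:8} yields both that the nets in $\mathcal B=\left\{(f_A^\alpha)_{A\in\mathcal A}\colon \alpha<2^{\mathfrak c}\right\}$ are linearly independent and that the positive cone and the negative cone generated by $\mathcal B$ are each contained in $\mathcal{ANM_A}\setminus\mathcal{ND_A}$. Since $\card(\mathcal B)=2^{\mathfrak c}$, this shows that $\mathcal{ANM_A}\setminus\mathcal{ND_A}$ is both positively and negatively $2^{\mathfrak c}$-coneable, that is, $2^{\mathfrak c}$-coneable, as desired. I do not anticipate any real obstacle beyond the bookkeeping just described, since Lemma~\ref{lem:8} already carries all the analytic content; the single point to stay alert about is keeping $I$ half-open with left endpoint $0$, so that $\int_I \frac{1}{x}\,d\lambda=\infty$ and the limit function fails to be integrable.
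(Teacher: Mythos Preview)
Your proposal is correct and follows essentially the same approach as the paper: both take $I=(0,\tfrac12]$, choose $X\subset\N$ witnessing $\cov(\N)$ with $\bigcup X=I$, let $\mathcal A$ be the finite subsets of $X$ under $\preccurlyeq$, place $\mathfrak C$ in $(\tfrac12,1]$, set $\sigma_A=\bigcup A$, and apply Lemma~\ref{lem:8}. Your write-up is in fact slightly more detailed than the paper's in verifying the hypotheses, but there is no substantive difference.
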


\begin{proof}
	Let $X$ be a family of cardinality $\cov(\N)$ of null sets contained in $\left(0,\frac{1}{2} \right]$ whose union is $\left(0,\frac{1}{2} \right]$.
	Consider $\mathcal A$ the set of all finite subsets of $X$ endowed with the binary relation defined in $\circ$ before Lemma~\ref{lem:7}, and $\mathfrak C$ a Cantor set of measure $0$ contained in $\left(\frac{1}{2},1 \right]$.
	Clearly $\card(\mathcal A)=\cov(\N)$ and $\bigcup A\in \N$ for each $A\in \mathcal A$.
	
	For every $\alpha<2^{\mathfrak c}$, let us define the net of functions $(f_A^\alpha)_{A\in \mathcal A}$ as follows: for each $x\in [0,1]$ and $A\in \mathcal A$,
		$$
		f_A^\alpha (x):=\begin{cases}
		\frac{1}{x}\cdot \chi_{V_\alpha^{\mathfrak C} \cup \left(\bigcup A\right)}(x) & \text{if } x\neq 0,\\
		0 & \text{if } x=0,
		\end{cases}
		$$ 
	where the sets $V_\alpha^{\mathfrak C}$ are defined in \eqref{equ:1} at the beginning of Section~\ref{sec:2}.
	
	By taking $\sigma_A:=\bigcup A\in \N$ which satisfies that $\sigma_A\subset \left(0,\frac{1}{2} \right]$ for every $A\in \mathcal A$, we have that $(\chi_A)_{A\in \mathcal A}$ converges pointwise everywhere to $\chi_I$ with $I:=\left(0,\frac{1}{2} \right]$, $\sigma_A\subseteq \sigma_B$ for every $A,B\in \mathcal A$ with $A\preccurlyeq B$, and $\sigma_A\cap \mathfrak C=\emptyset$ for every $A\in \mathcal A$ where $\mathfrak C\subset \left(\frac{1}{2},1 \right]$.
	Hence, by Lemma~\ref{lem:8}, the result follows.
\end{proof}

Let us now study the algebrability of the family $\mathcal{ANM_A}\setminus \mathcal{ND_A}$, where $\mathcal A$ is a specific directed set of cardinality $\cov(\N)$ but not a cardinal number.
In comparison with Theorem~\ref{thm:7}, we can improve the algebraic structure of $\mathcal{ANM_A}\setminus \mathcal{ND_A}$ from a cone to an algebra as it is shown in Theorem~\ref{thm:8}.
However, the cardinality of the generators of the algebraic structure decreases from $2^{\mathfrak c}$ to $\mathfrak c$.
In order to prove Theorem~\ref{thm:8}, we present general Lemma~\ref{lem:9} for arbitrary directed sets which will be used in Section~\ref{sec:3} too.

\begin{lemma}\label{lem:9}
	Let $I$ be a nondegenerate interval, $\mathcal A$ a nonempty directed set, $\left(\sigma_A\right)_{A\in \mathcal A}$ a net of subsets of $\R$ and $\mathcal H$ a Hamel basis of $\R$ over $\mathbb Q$.
	If $\sigma_A\cap I=\emptyset$ for every $A\in \mathcal A$, the nets in $\mathcal B:=\left\{ \left(f_A^h \right)_{A\in \mathcal A} \colon h\in \mathcal H \right\}$, where
		$$
		f_A^h (x):=\begin{cases}
		\frac{e^{hx}}{x} \cdot \chi_{I \cup \sigma_A}(x) & \text{if } x\neq 0,\\
		0 & \text{if } x=0,
		\end{cases}
		$$ 
	for every $A\in \mathcal A$ and $h\in \mathcal H$, are algebraically independent.
	
	Furthermore, if $I=(0,a]$ for some $a<1$, $\sigma_A \subset [0,1]$ is measurable for every $A\in \mathcal A$, $\sigma_A\subseteq \sigma_B$ when $A,B\in \mathcal A$ are such that $A\leq B$, $(\chi_{\sigma_A})_{A\in \mathcal A}$ converges pointwise everywhere to $\chi_J$ where $J\subset [a,1]$ is a nondegenerate compact interval, then the algebra generated by the functions in $\mathcal B$ restricted to $[0,1]$	is
	contained in $(\mathcal{ANM_A}\setminus \mathcal{ND_A})\cup \{0\}$.
\end{lemma}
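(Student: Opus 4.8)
The statement has two halves: that the nets in $\mathcal B$ are algebraically independent, and that (under the extra hypotheses) the algebra they generate, restricted to $[0,1]$, lies in $(\mathcal{ANM_A}\setminus\mathcal{ND_A})\cup\{0\}$. I would treat them in turn, and the same computation feeds both.

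\textbf{Algebraic independence.} Take distinct $h_1,\dots,h_n\in\mathcal H$ and a polynomial $P$ over the scalars without constant term, and suppose $P((f_A^{h_1})_{A\in\mathcal A},\dots,(f_A^{h_n})_{A\in\mathcal A})=0$ in the algebra of nets, i.e.\ $F_A:=P(f_A^{h_1},\dots,f_A^{h_n})$ is the zero function for every $A$. Since $\chi_{I\cup\sigma_A}^2=\chi_{I\cup\sigma_A}$, a monomial satisfies $\prod_i (f_A^{h_i})^{k_i}(x)=x^{-|\mathbf k|}e^{(\mathbf k\cdot\mathbf h)x}\chi_{I\cup\sigma_A}(x)$ for $x\neq 0$, with $|\mathbf k|=\sum_i k_i$ and $\mathbf k\cdot\mathbf h=\sum_i k_i h_i$. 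Restricting to the nonempty interior of $I$, where $\chi_{I\cup\sigma_A}=1$ for every $A$, I get the identity $\sum_{\mathbf k}c_{\mathbf k}\,x^{-|\mathbf k|}e^{(\mathbf k\cdot\mathbf h)x}=0$ on an interval. Because the $h_i$ are $\mathbb Q$-linearly independent (distinct Hamel basis vectors) and the $k_i$ are integers, distinct multi-indices $\mathbf k$ yield distinct reals $\beta_{\mathbf k}:=\mathbf k\cdot\mathbf h$; multiplying by $x^{M}$ with $M=\max|\mathbf k|$ turns the relation into $\sum_{\mathbf k}c_{\mathbf k}\,x^{\,M-|\mathbf k|}e^{\beta_{\mathbf k}x}\equiv 0$, a vanishing linear combination of the functions $x^{p}e^{\beta x}$ over pairwise distinct $\beta$. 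These are classically linearly independent, so every $c_{\mathbf k}=0$, i.e.\ $P=0$. The same expansion shows that a general nonzero element of the generated algebra is a net $(F_A)_{A\in\mathcal A}$ with $F_A=\phi\cdot\chi_{I\cup\sigma_A}$ (and $F_A(0)=0$), where $\phi(x)=\sum_{\mathbf k}c_{\mathbf k}\,x^{-|\mathbf k|}e^{\beta_{\mathbf k}x}$ is a fixed function not identically $0$ on $I$.

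\textbf{Membership in $\mathcal{ANM_A}$.} Fix such a nonzero element. Each $F_A$ is measurable, and since $\chi_{\sigma_A}\to\chi_J$ pointwise everywhere while $I$ is fixed, Remark~\ref{rem:3} gives that $(F_A)_{A\in\mathcal A}$ converges pointwise everywhere to the measurable function $F:=\phi\cdot\chi_{I\cup J}$ (with $F(0)=0$); this secures the a.e.-convergence-to-a-measurable-limit clause of $\mathcal{ANM_A}$. Because the net $(\sigma_A)$ is increasing with pointwise limit $\chi_J$ one has $\sigma_A\subseteq J$ for every $A$, so for a fixed $A$ the $I$-parts cancel and $\chi_{I\cup\sigma_A}-\chi_{I\cup J}=-\chi_{J\setminus\sigma_A}$, whence $|F_A-F|=|\phi|\,\chi_{J\setminus\sigma_A}$. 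Here the operative point is that each $\sigma_A$ has measure zero (in the intended applications $\sigma_A=\bigcup A$ is a finite union of null sets), so $\chi_{J\setminus\sigma_A}=\chi_J$ almost everywhere and, for small $\e>0$,
$$
\lambda\big(\{x:|F_A(x)-F(x)|\ge\e\}\big)=\lambda\big(\{x\in J:|\phi(x)|\ge\e\}\big)>0
$$
independently of $A$, where positivity uses that $\phi$ is a nonzero exponential combination and hence vanishes only finitely often on the compact interval $J\subset[a,1]$, which is bounded away from $0$. Thus $(F_A)_{A\in\mathcal A}$ does not converge to $F$ in measure and $(F_A)_{A\in\mathcal A}\in\mathcal{ANM_A}$.

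\textbf{Exclusion from $\mathcal{ND_A}$, and the main obstacle.} To place $(F_A)_{A\in\mathcal A}$ outside $\mathcal{ND_A}$ I would show that the a.e.\ limit $F$ is \emph{not} integrable: the a.e.\ limit is essentially unique, so no integrable function can play the role of $f$ in clause~(2) of the definition of $\mathcal{ND_A}$, and then $(F_A)_{A\in\mathcal A}\notin\mathcal{ND_A}$. Non-integrability must come from the behaviour of $\phi$ on $I=(0,a]$ near $0$: writing $\phi(x)=x^{-M}\psi(x)$ with $\psi(x)=\sum_{\mathbf k}c_{\mathbf k}\,x^{\,M-|\mathbf k|}e^{\beta_{\mathbf k}x}$ an entire function, $\phi$ has a genuine pole at $0$ precisely when the order of vanishing of $\psi$ at $0$ is strictly less than $M$, in which case $\int_I|\phi|\,d\lambda=\infty$ and $F$ is non-integrable. \emph{This is the step I expect to be the crux.} One must rule out that the principal parts cancel completely --- i.e.\ that $\psi$ vanishes to order $\ge M$ at $0$, which would make $F$ integrable and the element land in $\mathcal{ND_A}$ --- and this requires controlling the vanishing order at $0$ of the exponential sum $\psi$, again through the linear independence of the $x^{p}e^{\beta x}$ but now localised by Taylor expansion at the origin. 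Exploiting the $1/x$ singularity forced by $I=(0,a]$ is exactly what the construction is designed for; once non-integrability of $F$ is secured for every nonzero element, combining it with the previous paragraph yields the inclusion into $(\mathcal{ANM_A}\setminus\mathcal{ND_A})\cup\{0\}$.
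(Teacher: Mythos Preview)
Your approach mirrors the paper's almost exactly: the same monomial expansion $\prod_j(f_A^{h_j})^{k_j}(x)=x^{-|\mathbf k|}e^{(\mathbf k\cdot\mathbf h)x}\chi_{I\cup\sigma_A}(x)$, the same reduction to linear independence of the functions $x^m e^{\beta x}$ on a nondegenerate interval for the first part, the same pointwise limit $F=\phi\cdot\chi_{I\cup J}$, and the same plan of excluding $\mathcal{ND}_{\mathcal A}$ by showing $F\notin L^1$ through the singularity on $I=(0,a]$. Your treatment of the convergence-in-measure failure is in fact slightly more careful than the paper's (which tacitly assumes $\min_{J}|\phi|>0$); both you and the paper implicitly use $\lambda(\sigma_A)=0$, which is not among the stated hypotheses but does hold in every application.

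The step you flag as ``the crux'' is a genuine gap, and it cannot be closed in the way you hope: the principal parts \emph{can} cancel completely. Take distinct $h_1,h_2\in\mathcal H$ and the nonzero element $(f_A^{h_1})_{A\in\mathcal A}-(f_A^{h_2})_{A\in\mathcal A}$ of the generated algebra. Here
\[
\phi(x)=\frac{e^{h_1x}-e^{h_2x}}{x}\longrightarrow h_1-h_2\qquad(x\to 0^+),
\]
so $\phi$ extends continuously to $[0,1]$ and $F=\phi\cdot\chi_{I\cup J}$ is bounded, hence integrable. With $\sigma_A\in\mathcal N$ one then has $|F_A|\le\|\phi\|_\infty$ and $\int_{[0,1]}|F_A-F|\,d\lambda=\int_J|\phi|\,d\lambda>0$ for every $A$, so all three clauses in the definition of $\mathcal{ND}_{\mathcal A}$ are satisfied and this net lies in $\mathcal{ND}_{\mathcal A}$, not in $\mathcal{ANM}_{\mathcal A}\setminus\mathcal{ND}_{\mathcal A}$. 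The paper's proof asserts that $\int_0^a x^{-(r+1)}\bigl|\sum_i\lambda_i x^{s_i}e^{\beta_i x}\bigr|\,d\lambda=\infty$ follows from ``$r+1>s_i\ge 1$'' alone; that is exactly the step you were uneasy about, and the example above (with $r=1$, $s_1=s_2=1$, $\lambda_1=-\lambda_2$) shows it is not valid in general.
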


\begin{proof}
	Let $h_1,\ldots,h_n\in \mathcal H$ be distinct, where $n\in \mathbb N$, and consider the net 
		$$
		(F_A)_{A\in \mathcal A}:= \sum_{i=1}^n \lambda_i \prod_{j=1}^m (f_A^{h_j})_{A\in \mathcal A}^{k_{j,i}},
		$$
	with $\lambda_i\in \mathbb R$ for every $i\in \{1,\ldots,n \}$, $k_{j,i}\in \mathbb N_0$, $\sum_{j=1}^m k_{j,i}\geq 1$ and the $m$-tuples $(k_{1,i},\ldots,k_{m,i})$ are distinct for every $i\in \{1,\ldots,n \}$.
	Then, for any $A\in \mathcal A$ and $x\in \R\setminus \{0\}$, we have that
		$$
		F_A(x)=\sum_{i=1}^n \lambda_i \frac{e^{\sum_{j=1}^m k_{j,i} h_{j} x}}{x^{\sum_{j=1}^m k_{j,i}} } \cdot \chi_{I \cup \sigma_A}(x) = \sum_{i=1}^n \lambda_i \frac{e^{\beta_i x}}{x^{r_i}} \cdot \chi_{I \cup \sigma_A}(x)
		$$
	where $\beta_i:=\sum_{j=1}^m k_{j,i} h_{j}\in \R\setminus \{0\}$ and $r_i:=\sum_{j=1}^m k_{j,i}\in \mathbb N$ for each $i\in \{1,\ldots,n \}$.
	Notice that by construction $\beta_r\neq \beta_s$ for any $r,s\in \{1,\ldots,n \}$ distinct since $\mathcal H$ is a Hamel basis.
	Also observe that $F_A(0)=0$ for every $A\in \mathcal A$.
	
	For the first part, take $r:=\max\{r_i \colon i\in \{1,\ldots,n \} \}\geq 1$.
	Hence, for every $x\in I\setminus \{0\}$, we have
		$$
		F_A(x)=\frac{1}{x^{r+1}} \sum_{i=1}^n \lambda_i x^{s_i} e^{\beta_i x},
		$$
	with $s_i:=r+1-r_i\in \mathbb N$ for every $i\in \{1,\ldots,n \}$.
	Applying the fact that the set of functions in $\{x^m e^{\alpha x} \colon m\in \mathbb N,\ \alpha \in \R \}$ are linearly independent on any nondegenerate interval (see, for instance, \cite[remark~2.4]{FeMuRoSe} or \cite[corollary~2.5]{FeMuRoSe}), the proof of the first part immediately follows.
	
	For the second part, assume that $\lambda_i\neq 0$ for every $i\in \{1,\ldots,n \}$.
	It is obvious that $F_A$ is measurable for every $A\in \mathcal A$ since each $\sigma_A$ is measurable.
	Moreover, since $(\chi_{\sigma_A})_{A\in \mathcal A}$ converges pointwise everywhere to $\chi_J$, we have that $(F_A\restriction [0,1])_{A\in \mathcal A}$ converges pointwise everywhere to 
		$$
		F(x):=\begin{cases}
		\sum_{i=1}^n \lambda_i \frac{e^{\beta_i x}}{x^{r_i}} \cdot \chi_{I \cup J}(x) & \text{if } x\in (0,1],\\
		0 & \text{if } x=0.
		\end{cases}
		$$
	Now, as $0\notin J$ is a compact interval and $g(x):=\left|\sum_{i=1}^n \lambda_i \frac{e^{\beta_i x}}{x^{r_i}}\right|$ is continuous function on any compact interval that does not contain $0$, there is an $\overline{x}\in J$ such that 
		$$
		g(\overline{x})=\min\left\{ g(x) \colon x\in J \right\}. 
		$$
	Thus, for every $\varepsilon \in (0,g(\overline{x}))$, we have by construction
		\begin{align*}
			& \lim_{A\in \mathcal A} \lambda\left(\left\{x\in [0,1]\colon \left|F_A(x)-F(x)\right| \geq \varepsilon  \right\} \right)\\
			& =\lim_{A\in \mathcal A} \lambda\left(\left\{x\in J \colon \left|	\sum_{i=1}^n \lambda_i \frac{e^{\beta_i x}}{x^{r_i}} \cdot \chi_{J}(x)\right| \geq \varepsilon  \right\} \right)=\lambda(J)>0.
		\end{align*}
	(Recall that $J$ is nondegenerate.)
	Finally, observe that
		$$
		\int_{[0,1]} |F| d\lambda \geq \int_0^a \frac{1}{x^{r+1}} \left|\sum_{i=1}^n \lambda_i x^{s_i} e^{\beta_i x}\right| d\lambda = \infty,
		$$
	where we have used the fact that $r+1>s_i\geq 1$ for every $i\in \{1,\ldots,n\}$.
	We have finished the proof.
\end{proof}

\begin{theorem}\label{thm:8}
	There exists a directed set $\mathcal A$ that is not a cardinal number with $\card(\mathcal A)=\cov(\N)$ such that $\mathcal{ANM_A} \setminus \mathcal{ND}_{\mathcal A}$ in $\left(\mathbb R^{[0,1]} \right)^{\mathcal A}$ is strongly $\mathfrak c$-algebrable.
\end{theorem}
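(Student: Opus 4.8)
The plan is to instantiate Lemma~\ref{lem:9} in exactly the way Theorems~\ref{thm:1} and~\ref{thm:7} instantiate their preceding lemmas, so that strong $\mathfrak c$-algebrability drops out of the two conclusions of Lemma~\ref{lem:9}. First I would fix $a\in(0,1)$ and set $I:=(0,a]$, then choose a nondegenerate compact interval $J\subset(a,1]$ together with a family $X$ of null subsets of $J$ satisfying $\bigcup X=J$ and $\card(X)=\cov(\N)$; such a family exists because $\cov(\N)$ is unchanged under the affine bijection carrying $[0,1]$ onto $J$. I then let $\mathcal A$ be the set of all finite subsets of $X$ equipped with the preorder $\preccurlyeq$ defined before Lemma~\ref{lem:7}. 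As recorded there, $\mathcal A$ is a directed set, it is not linearly ordered and hence is not a cardinal number, and $\card(\mathcal A)=\card(X)=\cov(\N)$.

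Next I would set $\sigma_A:=\bigcup A$ for $A\in\mathcal A$ and take $\mathcal H$ to be a Hamel basis of $\R$ over $\mathbb Q$, and then verify the hypotheses of both parts of Lemma~\ref{lem:9}. Each $\sigma_A\subset J\subset[0,1]$ is a finite union of null sets, hence measurable, and $\sigma_A\cap I=\emptyset$ because $J\subset(a,1]$ is disjoint from $(0,a]$; the monotonicity $\sigma_A\subseteq\sigma_B$ for $A\preccurlyeq B$ is immediate. For the convergence, note $\sigma_A\subset J$ always, while for every $x\in J$ the equality $\bigcup X=J$ yields some $Y\in X$ with $x\in Y$, so $x\in\sigma_A$ for every $A\succcurlyeq\{Y\}$; thus $(\chi_{\sigma_A})_{A\in\mathcal A}$ converges pointwise everywhere to $\chi_J$, with $J\subset[a,1]$ a nondegenerate compact interval. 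Together with $I=(0,a]$ and $a<1$, every condition of Lemma~\ref{lem:9} is met.

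Lemma~\ref{lem:9} then yields directly that the family $\mathcal B=\left\{(f_A^h)_{A\in\mathcal A}\colon h\in\mathcal H\right\}$ is algebraically independent and that the algebra $B$ generated by $\mathcal B$, restricted to $[0,1]$, satisfies $B\setminus\{0\}\subset\mathcal{ANM_A}\setminus\mathcal{ND_A}$ inside $\left(\R^{[0,1]}\right)^{\mathcal A}$. Since $\card(\mathcal H)=\mathfrak c$ and, by the characterization of free algebras in the preliminaries, a set of algebraically independent elements of a commutative algebra is precisely a set of free generators, $B$ is a $\mathfrak c$-generated free algebra with $B\setminus\{0\}\subset\mathcal{ANM_A}\setminus\mathcal{ND_A}$. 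This is exactly the definition of strong $\mathfrak c$-algebrability, which completes the proof.

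I expect essentially no substantive obstacle, since all the analytic content — measurability of the $F_A$, convergence in measure to a limit that nonetheless fails to be integrable, and the algebraic independence coming from the linear independence of $\{x^m e^{\alpha x}\}$ — has already been absorbed into Lemma~\ref{lem:9}. The only points that genuinely require care are bookkeeping: (i) positioning $I$, $J$ and $X$ so that $\sigma_A\cap I=\emptyset$ and $\chi_{\sigma_A}\to\chi_J$ hold \emph{simultaneously}, which is why $J$ must be chosen strictly to the right of $a$ rather than sharing the endpoint $a$ (otherwise the point $a$ would be forced both into and out of $\sigma_A$); and (ii) confirming that $\mathcal A$ has cardinality $\cov(\N)$ and is not a cardinal number, both of which follow from the general facts about finite subsets under $\preccurlyeq$ stated before Lemma~\ref{lem:7}.
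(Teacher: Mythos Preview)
Your proposal is correct and follows essentially the same route as the paper's proof: the paper takes the specific choices $a=\tfrac12$, $I=\bigl(0,\tfrac12\bigr]$, $J=\bigl[\tfrac23,1\bigr]$, lets $X\subset\mathcal N$ witness $\cov(\N)$ with $\bigcup X=J$, sets $\mathcal A$ to be the finite subsets of $X$ under $\preccurlyeq$ and $\sigma_A=\bigcup A$, and then invokes Lemma~\ref{lem:9} exactly as you do. Your extra care in point (i) about keeping $J$ strictly to the right of $a$ is well taken and matches the paper's choice of a gap between $\tfrac12$ and $\tfrac23$.
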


\begin{proof}
	Let $\mathcal H$ be a Hamel basis of $\R$ over $\mathbb Q$ and, for every $h\in \mathcal H$, let 
		$$
		f_h(x):=\frac{e^{h x}}{x},
		$$
	for any $x\in [0,1]$.
	Recall that $\text{card}(\mathcal H)=\mathfrak c$.
	
	Take $X\subset \mathcal N$ witnessing $\cov(\N)$ such that $\bigcup X=\left[\frac{2}{3},1 \right]$.
	Let $\mathcal A$ be the set of all finite subsets of $X$ endowed with the binary relation $\preccurlyeq$ defined in $\circ$ before Lemma~\ref{lem:7}.
	Recall that $\card(\mathcal A)=\card(X)=\cov(\N)$.
	
	For every $h\in \mathcal H$, let $(f_A^h)_{A\in \mathcal A}\in \left(\mathbb R^{[0,1]} \right)^{\mathcal A}$ be defined as follows: for each $A\in \mathcal A$,
		$$
		f_A^h(x):=\begin{cases}
		f_h(x) \cdot \chi_{\left(0,\frac{1}{2} \right] \cup \left(\bigcup A \right)}(x) & \text{if } x\neq 0, \\
		0 & \text{if } x=0.
		\end{cases}
		$$
	
	Since $\sigma_A:= \bigcup A \in \N$ for every $A\in \mathcal A$, $\sigma_A \subset J$ with $J:=\left[\frac{2}{3},1 \right]$, for every $A\in \mathcal A$, $\sigma_A\cap I=\emptyset$ for each $A\in \mathcal A$ with $I=\left(0,\frac{1}{2} \right]$, and $(\chi_{\sigma_A})_{A\in \mathcal A}$ converges pointwise everywhere to $\chi_J$, we have the desired result by Lemma~\ref{lem:9}.
\end{proof}

We will now study the algebrability of $\mathcal{ND_A}$ and $\mathcal{AN_A}$.
Similarly to Theorems~\ref{thm:7} and~\ref{thm:8}, we can improve the algebraic structure of the family $\mathcal{ND_A}$ from a positive cone (see Theorem~\ref{thm:1}) to an algebra.
This is shown in the following Theorem~\ref{thm:3_1}, in which we will prove that $\mathcal{ND}_{\mathcal A}$ is strongly $\mathfrak c$-algebrable for a directed set $\mathcal{A}$ of cardinality $\cov(\N)$ that is not a cardinal number.
Still, as in the case of $\mathcal{ANM_A}\setminus \mathcal{ND_A}$, 
the cardinality of the number of generators of the algebraic structure decreases from $2^{\mathfrak c}$ to $\mathfrak c$.
In the case of $\mathcal{AN_A}$ we will prove that $\mathcal{AN_A}$ is strongly $\mathfrak c$-algebrable for a directed set $\mathcal A$ of cardinality $\add(\N)$ that is not a cardinal number.
To do so, we begin by proving the following lemmas.

\begin{lemma}\label{lem:2}
	If $X$ is a family of pairwise disjoint null sets of $\R$ witnessing $\add(\N)$, then there is a  nonmeasurable $V\subset \bigcup X$ that has inner measure $0$ and same outer measure as $\bigcup X$.
\end{lemma}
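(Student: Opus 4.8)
The plan is to reduce the statement to the classical construction of a Bernstein-type set, but carried out inside $E:=\bigcup X$ rather than inside $[0,1]$. First I would isolate the only consequence of the hypotheses that the argument needs: since $X$ witnesses $\add(\N)$, by definition $\bigcup X\notin\N$, so $E$ has positive outer measure, say $\lambda^{*}(E)=t>0$. I would then fix a measurable hull $G\supseteq E$ (a $G_{\delta}$ set with $\lambda(G)=t$); by minimality of $G$ one has the standard identity $\lambda^{*}(E\cap B)=\lambda(G\cap B)$ for every measurable $B$. In particular, for every perfect set $P\subseteq G$ with $\lambda(P)>0$ this gives $\lambda^{*}(E\cap P)=\lambda(P)>0$, whence $\card(E\cap P)=\co$, since a set of positive outer measure has cardinality $\co$.

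The construction then proceeds by transfinite recursion. I would enumerate as $\{P_{\alpha}\colon\alpha<\co\}$ all perfect subsets of $G$ of positive Lebesgue measure (there are exactly $\co$ of them), and at stage $\alpha<\co$ choose two distinct points $a_{\alpha},b_{\alpha}\in E\cap P_{\alpha}$ different from all fewer-than-$\co$ points used at earlier stages; this is possible precisely because $\card(E\cap P_{\alpha})=\co$. Setting $V:=\{a_{\alpha}\colon\alpha<\co\}$ produces a subset of $E$, while $\{b_{\alpha}\colon\alpha<\co\}\subseteq E\setminus V$.

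It then remains to verify the two measure conditions, both by the same diagonal mechanism. For $\lambda_{*}(V)=0$: any closed $F\subseteq V$ satisfies $F\subseteq G$, so if $\lambda(F)>0$ its perfect kernel would be a positive-measure perfect set $P_{\alpha}\subseteq F\subseteq V$, contradicting $b_{\alpha}\in P_{\alpha}\setminus V$; hence every closed subset of $V$ is null. For $\lambda^{*}(V)=t$: were it smaller, then $\lambda_{*}(G\setminus V)=\lambda(G)-\lambda^{*}(V)>0$, so $G\setminus V$ would contain a positive-measure perfect set $P_{\alpha}$, contradicting $a_{\alpha}\in P_{\alpha}\cap V$. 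Thus $\lambda_{*}(V)=0\neq t=\lambda^{*}(V)=\lambda^{*}(E)$, which at once shows that $V$ is nonmeasurable and has the required outer measure.

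The main obstacle is not the coloring recursion, which is routine once set up, but guaranteeing that the chosen points can always be taken inside $E$; this is exactly where non-nullity of $\bigcup X$ is used, through the hull identity $\lambda^{*}(E\cap P)=\lambda(P)$ that forces $E$ to meet every positive-measure perfect subset of $G$ in a set of full cardinality $\co$. It is worth noting that only the non-nullity of $\bigcup X$ enters this particular conclusion; the pairwise disjointness and the exact value $\card(X)=\add(\N)$ serve only to guarantee that non-nullity in the intended applications. An essentially equivalent formulation of the same idea is to use the recursion to split $E$ into two sets $V$ and $W$ of full outer measure $t$, from which both $\lambda_{*}(V)=0$ and $\lambda^{*}(V)=t$ follow immediately.
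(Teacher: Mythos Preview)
Your overall strategy---a Bernstein-type splitting of $E=\bigcup X$ relative to the positive-measure perfect subsets of a measurable hull $G$---is natural and would succeed, but the argument as written has a genuine gap. The sentence ``$\lambda^{*}(E\cap P)>0$, whence $\card(E\cap P)=\co$, since a set of positive outer measure has cardinality $\co$'' is not a ZFC theorem: it is exactly the statement $\non(\N)=\co$, which is independent of ZFC. In a model with $\non(\N)<\co$ one can have $E\cap P_\alpha$ non-null yet of cardinality $<\co$, and then at some stage $\alpha\geq\non(\N)$ of your length-$\co$ recursion the set $E\cap P_\alpha$ may already be exhausted by the points chosen earlier. (Concretely: if $\non(\N)=\omega_1<\co$ and $E_0\subset[0,\tfrac12]$ is non-null with $\card(E_0)=\omega_1$, consider $E=E_0\cup[\tfrac12,1]$ and a perfect $P\subset G\cap[0,\tfrac12]$; then $\card(E\cap P)\leq\omega_1$.) Nothing in the hypotheses on $X$ rules this out.

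The paper sidesteps exactly this difficulty by first passing to an $F_\sigma$ set $F\subset E$ with $\lambda(F)=\lambda_*(E)$. If $\lambda(F)=0$ one simply takes $V=E$; otherwise one carries out the Bernstein construction \emph{inside $F$}, where perfect subsets genuinely have cardinality $\co$ because $F$ is itself a Polish space, and then sets $V=B\cup(E\setminus F)$. The remainder $E\setminus F$ already has inner measure $0$ (by maximality of $F$) and outer measure $\lambda^*(E)-\lambda(F)$, so the two pieces combine to give $\lambda_*(V)=0$ and $\lambda^*(V)=\lambda^*(E)$. Your argument is essentially the paper's in the special case $\lambda_*(E)>0$ and $\non(\N)=\co$; the needed repair is precisely this case split, after which the recursion runs inside a measurable set rather than inside $E$.
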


\begin{proof}
	Let F be an $F_\sigma$-subset of $\bigcup X$ with the same inner measure as $\bigcup X$.
	If $F$ has positive measure, then take $B$ a Bernstein subset of $F$. 
	Otherwise, take $B = \emptyset$. 
	Then $V := B \cup \left(\bigcup X \setminus F \right)$ is as needed, as it has inner measure 0 and the same outer measure as $\bigcup X$.
	In particular, V is non-measurable.
\end{proof}

\begin{lemma}\label{lem:3}
	Let $n\in \mathbb N$, $\lambda_1,\ldots,\lambda_n\in \mathbb R\setminus \{0\}$, $\beta_1,\ldots,\beta_n\in \R\setminus \{0\}$ be pairwise distinct and $I$ a nondegenerate interval.
	If $\sum_{i=1}^n \lambda_i e^{\beta_i x}=0$ a.e. on $I$, then $\lambda_i=0$ for every $i\in \{1,\ldots,n \}$.
\end{lemma}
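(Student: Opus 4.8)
The plan is to read this statement as the assertion that finitely many exponentials $e^{\beta_i x}$ with distinct nonzero exponents are linearly independent on a nondegenerate interval, even when ``equal to zero'' is required only almost everywhere; the displayed identity then forces every $\lambda_i$ to vanish, which is incompatible with the standing hypothesis $\lambda_i\neq 0$, so no such configuration exists. Accordingly I would set
$$
f(x):=\sum_{i=1}^n \lambda_i e^{\beta_i x},
$$
a real-analytic (in particular $C^\infty$) function on $\R$, and assume $f=0$ a.e.\ on $I$.

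First I would upgrade the almost-everywhere vanishing to genuine vanishing using only continuity. The set $U:=\{x\in I : f(x)\neq 0\}$ is relatively open in $I$, being the preimage of $\R\setminus\{0\}$ under the continuous map $f$. If $f=0$ a.e.\ on $I$, then $U$ has Lebesgue measure zero; but a nonempty open subset of $\R$ contains an interval and hence has positive measure, so $U=\emptyset$. Thus $f\equiv 0$ on $I$, and since $I$ is nondegenerate its interior is itself a nondegenerate interval.

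Next I would extract the coefficients by differentiation at an interior point. Fix any $x_0$ in the interior of $I$. Since $f$ vanishes on a neighbourhood of $x_0$, all of its derivatives vanish there, giving
$$
0=f^{(k)}(x_0)=\sum_{i=1}^n \big(\lambda_i e^{\beta_i x_0}\big)\,\beta_i^{\,k}\qquad(k=0,1,\dots,n-1).
$$
Writing $\mu_i:=\lambda_i e^{\beta_i x_0}$, this is a homogeneous linear system whose coefficient matrix is the Vandermonde matrix $\big(\beta_i^{\,k}\big)_{0\le k\le n-1,\,1\le i\le n}$. As the $\beta_i$ are pairwise distinct, its determinant $\prod_{r<s}(\beta_s-\beta_r)$ is nonzero, so $\mu_i=0$ for every $i$; since $e^{\beta_i x_0}\neq 0$ this yields $\lambda_i=0$ for every $i$, contradicting $\lambda_i\neq 0$ and completing the argument.

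The only genuinely delicate step is the passage from ``$f=0$ a.e.'' to ``$f\equiv 0$ on $I$''; once that is in hand the remainder is the classical Vandermonde linear-independence computation and presents no obstacle. I expect this continuity-based upgrade to be the heart of the matter, though one could equally invoke real-analyticity (the zero set of a nonzero analytic function on an interval is discrete, hence null) to reach the same conclusion.
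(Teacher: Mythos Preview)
Your proof is correct. The upgrade from almost-everywhere vanishing to identical vanishing via continuity is sound (a nonempty relatively open subset of a nondegenerate interval has positive measure), and the Vandermonde argument extracting the coefficients from the successive derivatives at an interior point is the classical one.

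Your route differs from the paper's. The paper complexifies: it extends $f$ to the entire function $g(z)=\sum_{i=1}^n \lambda_i e^{\beta_i z}$ on $\mathbb C$, observes that the zero set of $g$ has an accumulation point (since $g$ vanishes a.e.\ on the nondegenerate interval $I$, its real zero set is uncountable), invokes the Identity Theorem for holomorphic functions to conclude $g\equiv 0$ on $\mathbb C$, and then appeals to the linear independence of the exponentials $e^{\beta_i z}$ on $\mathbb C$. Your argument stays entirely within real analysis and elementary linear algebra: continuity handles the a.e.-to-everywhere step, and the Vandermonde determinant finishes it. Your version is more self-contained and arguably more elementary, since it avoids the machinery of complex analysis; the paper's version is terser if one is willing to import the Identity Theorem as a black box. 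Either way the ``delicate step'' you flag---passing from a.e.\ to identical vanishing---is handled cleanly in both approaches, just by different means (open-set measure for you, accumulation of zeros for the paper).
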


\begin{proof}
	Define the function $g\colon \mathbb C\to \mathbb C$ by $g(z)=\sum_{i=1}^n \lambda_i e^{\beta_i z}$.
	It is easy to see that $g$ is a holomorphic function and the set of zeros of $g$ has an accumulation point ($\sum_{i=1}^n \lambda_i e^{\beta_i x}$ is equal to $0$ a.e. on $I$). 
	Thus, by the Identity Theorem, we have that $g\equiv 0$.
	Since the exponential functions with nonzero distinct exponents are linearly independent on the complex plane, we have that $\lambda_i=0$ for every $i\in \{1,\ldots,n \}$, a contradiction.
\end{proof}

\begin{lemma}\label{lem:4}
	Let $n\in \mathbb N$, $\lambda_1,\ldots,\lambda_n\in \mathbb R \setminus \{0\}$, $\beta_1,\ldots,\beta_n\in \R\setminus \{0\}$ be pairwise distinct and $I$ a nondegenerate interval different from $\R$. If $V\subset \R\setminus I$ is a nonmeasurable set having inner measure $0$, then $\sum_{i=1}^n \lambda_i e^{\beta_i x} \cdot \chi_{V\cup I}(x)$ is nonmeasurable.
\end{lemma}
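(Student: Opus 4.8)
The plan is to set $g(x):=\sum_{i=1}^n \lambda_i e^{\beta_i x}$ and $Z:=\{x\in\R : g(x)=0\}$, and to reduce the nonmeasurability of $F:=\sum_{i=1}^n \lambda_i e^{\beta_i x}\cdot \chi_{V\cup I}(x)=g\cdot\chi_{V\cup I}$ to the assumed nonmeasurability of $V$ by recovering $V$, up to a null set, from the support of $F$. The first step is to observe, exactly as in the proof of Lemma~\ref{lem:3}, that $g$ extends to the nonzero entire function $z\mapsto\sum_{i=1}^n\lambda_i e^{\beta_i z}$ (nonzero because the $\lambda_i$ are nonzero and exponentials with distinct exponents are linearly independent on $\mathbb C$). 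Since the zeros of a nonzero holomorphic function are isolated, the real zero set $Z$ is discrete, hence countable, and therefore $\lambda(Z)=0$.

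Next I would describe the support of $F$. Since $F(x)=g(x)$ on $V\cup I$ and $F(x)=0$ off $V\cup I$, we have $\{x\colon F(x)\neq 0\}=(V\cup I)\setminus Z$. Assume, towards a contradiction, that $F$ is measurable. Then $\{x\colon F(x)\neq 0\}=F^{-1}(\R\setminus\{0\})$ is measurable, and intersecting it with the measurable set $\R\setminus I$ gives
\[
\big[(V\cup I)\setminus Z\big]\cap(\R\setminus I)=V\setminus Z,
\]
where I used that $V\subset\R\setminus I$ and $I\cap(\R\setminus I)=\emptyset$. Hence $V\setminus Z$ is measurable.

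Finally, since $Z$ is a null set and Lebesgue measure is complete, the subset $V\cap Z$ of $Z$ is measurable (of measure $0$), so $V=(V\setminus Z)\cup(V\cap Z)$ would be measurable, contradicting the nonmeasurability of $V$; therefore $F$ is nonmeasurable. The only delicate point is the first step—verifying that the real zero set $Z$ of $g$ is null—which I would handle through the holomorphic extension and the Identity Theorem, precisely as in Lemma~\ref{lem:3}. I also note that the hypothesis that $V$ has inner measure $0$ is not actually used in this argument (only that $V$ is nonmeasurable and $V\subset\R\setminus I$); it is inherited from the construction of $V$ in Lemma~\ref{lem:2} and will matter later when $F$ is combined with other functions.
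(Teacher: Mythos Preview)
Your proof is correct and takes a different route from the paper. The paper argues in two steps: first it shows that $g\cdot\chi_V$ is nonmeasurable by assuming measurability, using the inner-measure-$0$ hypothesis to conclude that its support (a measurable subset of $V$) is null, and then invoking Lemma~\ref{lem:3} to reach the contradiction $\lambda_i=0$; second, it writes $g\cdot\chi_{V\cup I}=g\cdot\chi_V+g\cdot\chi_I$ and observes that measurability of $g\cdot\chi_{V\cup I}$ would force $g\cdot\chi_V$ to be measurable. You instead work directly with $F=g\cdot\chi_{V\cup I}$: you identify the support $\{F\neq0\}=(V\cup I)\setminus Z$, intersect with the measurable set $\R\setminus I$ to recover $V\setminus Z$, and then use that $Z$ is null to reconstruct $V$ itself as a measurable set, contradicting the hypothesis. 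Your route is more direct and, as you correctly observe, does not use the inner-measure-$0$ assumption at all---only that $V$ is nonmeasurable and disjoint from $I$. The paper's route buys a black-box appeal to Lemma~\ref{lem:3} at the cost of the extra hypothesis and an additional decomposition step.
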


\begin{proof}
	Let us show first that $\sum_{i=1}^n \lambda_i e^{\beta_i x} \cdot \chi_V(x)$ is nonmeasurable.
	Assume otherwise.
	Then, since $V$ has inner measure $0$, $\sum_{i=1}^n \lambda_i e^{\beta_i x}$ is equal to $0$ a.e.
	Hence, by Lemma~\ref{lem:3}, $\lambda_i=0$ for every $i\in \{1,\ldots,n \}$, contradicting our hypotheses.
	
	Now $\sum_{i=1}^n \lambda_i e^{\beta_i x} \cdot \chi_{V\cup I}(x)$ is nonmeasurable since, assuming otherwise, we would have that 
	$$
	\sum_{i=1}^n \lambda_i e^{\beta_i x} \cdot \chi_V(x)=\sum_{i=1}^n \lambda_i e^{\beta_i x}  \cdot \chi_{V\cup I}(x)-\sum_{i=1}^n \lambda_i e^{\beta_i x} \cdot \chi_I(x)
	$$
	would be measurable, a contradiction.
\end{proof}

\begin{lemma}\label{lem:5}
	Let $\mathcal H$ be a Hamel basis of $\R$ over $\mathbb Q$, $I,J$ nondegenerate intervals with $J\setminus I$ having nonempty interior and $\mathcal A$ a nonempty directed set.
	Assume that $(\sigma_A)_{A\in \mathcal A}$ is a net of subets of $\R$ such that $\sigma_A\subseteq J\setminus I$ for every $A\in \mathcal A$ and define, for every $h\in \mathcal H$ and $A\in \mathcal A$, the net
	$$
	f_A^h:=f_h \cdot \chi_{\sigma_A\cup I},
	$$
	where $f_h(x)=e^{hx}$ for every $x\in \R$.
	Then the nets of functions in $\{(f_A^h)_{A\in \mathcal A} \colon h\in \mathcal H \}$ are algebraically independent.
	
	Moreover, assume that $\sigma_A\in \N$ for every $A\in \mathcal A$.
	\begin{itemize}
		\item[(i)] If $V\subset J\setminus I$ is a nonmeasurable set with inner measure $0$ such that $(\chi_{\sigma_A})_{A\in \mathcal A}$ converges pointwise everyhwere to $\chi_V$, then the algebra generated by $\{(f_A^h)_{A\in \mathcal A} \colon h\in \mathcal H \}$ is contained in $\mathcal{AN_\mathcal A}\cup \{0\}$.
		
		\item[(ii)] If $J,I\subset [0,1]$ and $(\chi_{\sigma_A})_{A\in \mathcal A}$ converges pointwise everyhwere to $\chi_J$, then the algebra generated by $\{(f_A^h\restriction [0,1])_{A\in \mathcal A} \colon h\in \mathcal H \}$ is contained in $\mathcal{ND_\mathcal A}\cup \{0\}$.
	\end{itemize}
\end{lemma}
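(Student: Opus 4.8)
The plan is to treat all three claims through a single normal form for elements of the algebra. Fix distinct $h_1,\dots,h_m\in\mathcal H$ and write a typical member as $(F_A)_{A\in\mathcal A}=\sum_{i=1}^n\lambda_i\prod_{j=1}^m (f_A^{h_j})_{A\in\mathcal A}^{k_{j,i}}$, where the exponent tuples $(k_{1,i},\dots,k_{m,i})$ are pairwise distinct and $\sum_j k_{j,i}\ge 1$ (no free term). Since $\chi_{\sigma_A\cup I}$ is idempotent and $f_A^h=e^{hx}\chi_{\sigma_A\cup I}$, each product collapses to $e^{\beta_i x}\chi_{\sigma_A\cup I}(x)$ with $\beta_i:=\sum_{j=1}^m k_{j,i}h_j$, so that
$$
F_A = g\cdot\chi_{\sigma_A\cup I},\qquad g(x):=\sum_{i=1}^n\lambda_i e^{\beta_i x}.
$$
Because $\mathcal H$ is a Hamel basis over $\mathbb Q$, the $\beta_i$ are pairwise distinct and nonzero (a nontrivial combination of distinct basis vectors with nonnegative integer coefficients, not all zero, cannot vanish). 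This reduction is purely formal but carries most of the bookkeeping. For algebraic independence I would then evaluate on $I$, where $\chi_{\sigma_A\cup I}\equiv 1$, so $F_A\restriction I=g\restriction I$ for every $A$; if the net is identically zero, then $g\equiv 0$ on the nondegenerate interval $I$, and Lemma~\ref{lem:3} forces every $\lambda_i=0$, giving only the trivial relation.

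For (i) assume $\lambda_i\neq 0$ for all $i$. Each $F_A$ is measurable since $\sigma_A\in\N$ and $I$ is an interval. As $\sigma_A,V\subseteq J\setminus I$ are disjoint from $I$ and $\chi_{\sigma_A}\to\chi_V$ pointwise everywhere, one checks that $\chi_{\sigma_A\cup I}\to\chi_{V\cup I}$ pointwise everywhere (for $x\in I$ both values are $1$, and for $x\notin I$ one has $\chi_{\sigma_A\cup I}(x)=\chi_{\sigma_A}(x)\to\chi_V(x)=\chi_{V\cup I}(x)$), so $F_A\to g\cdot\chi_{V\cup I}$ everywhere. Since $J\setminus I$ has nonempty interior we have $I\neq\R$, and $V\subset\R\setminus I$ is nonmeasurable of inner measure $0$; thus Lemma~\ref{lem:4} applies to $\sum_i\lambda_i e^{\beta_i x}\cdot\chi_{V\cup I}$ and the limit is nonmeasurable. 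Hence $(F_A)_{A\in\mathcal A}\in\mathcal{AN}_{\mathcal A}$.

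For (ii), working on $[0,1]\supseteq I\cup J$ and again assuming $\lambda_i\neq 0$, the identical convergence argument with $\chi_{\sigma_A}\to\chi_J$ gives $F_A\restriction[0,1]\to f:=(g\cdot\chi_{I\cup J})\restriction[0,1]$ everywhere. The function $g$ is continuous, hence bounded on $[0,1]$, so the dominating function $|g|\chi_{[0,1]}$ and the limit $f$ are both integrable, yielding conditions (1) and (2) of $\mathcal{ND}_{\mathcal A}$. The crux is condition (3): since $\sigma_A\cup I\subseteq I\cup J$, one computes $F_A-f=-g\cdot\chi_{(J\setminus I)\setminus\sigma_A}$, and because $\sigma_A$ is null,
$$
\int_{[0,1]}|F_A-f|\,d\lambda=\int_{(J\setminus I)\setminus\sigma_A}|g|\,d\lambda=\int_{J\setminus I}|g|\,d\lambda
$$
for every $A\in\mathcal A$. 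This value is independent of $A$ and strictly positive: $g$ is a nonzero exponential sum, so by Lemma~\ref{lem:3} it is not a.e.\ zero on the nondegenerate interior of $J\setminus I$. Therefore $\lim_{A\in\mathcal A}\int_{[0,1]}|F_A-f|\,d\lambda=\int_{J\setminus I}|g|\,d\lambda\neq 0$, so $(F_A)_{A\in\mathcal A}\in\mathcal{ND}_{\mathcal A}$.

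The genuinely new content beyond the reduction is concentrated in condition (3) of part (ii): the main point is recognizing that the $L^1$-distance between $F_A$ and its limit equals the single $A$-independent constant $\int_{J\setminus I}|g|\,d\lambda$, which is positive precisely because Lemma~\ref{lem:3} rules out $g$ vanishing a.e.\ on an interval. Everything else (measurability, domination, the pointwise limits, and the nonmeasurability in part (i)) follows by invoking the already-established Lemmas~\ref{lem:3} and~\ref{lem:4} together with the idempotency bookkeeping, so I expect no further obstacle.
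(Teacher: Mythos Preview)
Your proof is correct and follows essentially the same approach as the paper: you reduce an arbitrary algebra element to $g\cdot\chi_{\sigma_A\cup I}$ with $g(x)=\sum_i\lambda_i e^{\beta_i x}$, use the linear independence of exponentials on $I$ (via Lemma~\ref{lem:3}) for algebraic independence, invoke Lemma~\ref{lem:4} for the nonmeasurability in part~(i), and compute $\int_{[0,1]}|F_A-f|\,d\lambda=\int_{J\setminus I}|g|\,d\lambda\neq 0$ via Lemma~\ref{lem:3} for part~(ii). Your write-up is in fact slightly cleaner than the paper's, since you introduce $g$ once and handle the general $F_A$ directly in part~(ii) rather than first treating the generators separately.
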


\begin{proof}
	Take finitely many distinct $h_1,\ldots,h_m\in \mathcal H$ and consider the net 
	$$
	(F_A)_{A\in \mathcal A}:= \sum_{i=1}^n \lambda_i \prod_{j=1}^m (f_A^{h_j})_{A\in \mathcal A}^{k_{j,i}},
	$$
	where $n\in \mathbb N$, $\lambda_i\in \mathbb R$ for every $i\in \{1,\ldots,n \}$, $k_{j,i}\in \mathbb N_0$, $\sum_{j=1}^m k_{j,i}\geq 1$ and the $m$-tuples $(k_{1,i},\ldots,k_{m,i})$ are distinct for every $i\in \{1,\ldots,n \}$.
	Then, for any $A\in \mathcal A$ and $x\in \R$, we have that
	$$
	F_A(x)=\sum_{i=1}^n \lambda_i e^{\sum_{j=1}^m k_{j,i} h_{j} x} \cdot \chi_{\sigma_A\cup I}(x)=\sum_{i=1}^n \lambda_i e^{\beta_i x} \cdot \chi_{\sigma_A\cup I}(x)
	$$
	where $\beta_i:=\sum_{j=1}^m k_{j,i} h_{j}\in \R\setminus \{0\}$.
	Notice that by construction $\beta_r\neq \beta_s$ for any $r,s\in \{1,\ldots,n \}$ distinct since $\mathcal H$ is a Hamel basis.
	
	For the first part, assume that $(F_A)_{A\in \mathcal A}=0$.
	As the exponential functions with nonzero distinct exponents are linearly independent on any nondegenerate interval, we have that $\lambda_i=0$ for every $i\in \{1,\ldots,n \}$.
	Therefore the nets in $\{(f_A^h)_{A\in \mathcal A} \colon h\in \mathcal H \}$ are algebraically independent.
	
	For the last part, observe that for every $h\in \mathcal H$, the net of functions $(f_A^h)_{A\in \mathcal A}$ converges pointwise everywhere to either $f_h \cdot \chi_{V\cup I}$ for part (i) (which is clearly nonmeasurable since the union of two disjoint sets one being nonmeasurable and the other having positive measure is nonmeasurable) or the integrable function $f_h\cdot \chi_{J\cup I}$ for part (ii).
	Clearly, in any case, each $f_A^h$ is measurable since $A$ is a null set and $f_h$ is measurable.
	
	For part (i) it is clear that each $F_A$ is measurable but also the net $(F_A)_{A\in \mathcal A}$ converges pointwise everywhere to 
	$$
	F(x)=\sum_{i=1}^n \lambda_i e^{\beta_i x} \cdot \chi_{V \cup I}(x).
	$$
	Since $V$ is nonmeasurable and has inner measure $0$ we have, by Lemma~\ref{lem:4}, that $F$ is nonmeasurable.
	
	For part (ii), $0\leq f_A^h \restriction [0,1] \leq f_h \restriction [0,1]$ a.e. for every $A\in \mathcal A$.
	Furthermore, notice that by construction, the net of functions $(f_A^h \restriction [0,1])_{A\in \mathcal A}$ converges pointwise everywhere to the integrable function $f_h \restriction [0,1]$ but
	\begin{align*}
	\lim_{A\in \mathcal A}\int_{[0,1]} |f_A^h-f_h | d\lambda & =\lim_{A\in \mathcal A}\int_{J\setminus I} |f_h| d\lambda \neq 0.
	\end{align*}
	Thus, $(f_A^h \restriction [0,1])_{A\in \mathcal A}\in \mathcal{ND}_{\mathcal A}$.
	Now, clearly every $F_A \restriction [0,1]$ is measurable and also bounded a.e. by $\left|\sum_{i=1}^n \lambda_i e^{\beta_i x}\right|$ for every $x\in [0,1]$ (note that we have $\int_{[0,1]} \left|\sum_{i=1}^n \lambda_i e^{\beta_i x}\right| d\lambda<\infty$).
	Moreover, the net $(F_A \restriction [0,1])_{A\in \mathcal A}$ converges pointwise everywhere to the integrable function $\sum_{i=1}^n \lambda_i e^{\beta_i x}\cdot \chi_{J\cup I}$ which satisfies that
	\begin{align*}
	\lim_{A\in \mathcal A}\int_{[0,1]} \left|F_{A}-\sum_{i=1}^n \lambda_i e^{\beta_i x}\right| d\lambda & =\lim_{A\in \mathcal A} \int_{J\setminus I} \left|\sum_{i=1}^n \lambda_i e^{\beta_i x}\right| d\lambda\\
	& = \int_{J\setminus I} \left|\sum_{i=1}^n \lambda_i e^{\beta_i x}\right| d\lambda\neq 0.
	\end{align*}
	Indeed, if $\int_{J\setminus I} \left|\sum_{i=1}^n \lambda_i e^{\beta_i x}\right| d\lambda$ were equal to $0$, then $\sum_{i=1}^n \lambda_i e^{\beta_i x}$ would be equal to $0$ a.e. on $J\setminus I$.
	Hence, by Lemma~\ref{lem:3}, we would have that $\lambda_i=0$ for every $i\in \{1,\ldots,n \}$, 
	a contradiction. 
\end{proof}

\begin{theorem}\label{thm:3_1} 
	There exists a directed set $\mathcal A$ that is not a cardinal number with $\card(\mathcal A)=\cov(\N)$ such that $\mathcal{ND}_{\mathcal A}$ in $\left( \mathbb{R}^{[0,1]}\right)^{\mathcal A}$ is strongly $\mathfrak c$-algebrable.
\end{theorem}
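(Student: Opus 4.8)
The plan is to derive Theorem~\ref{thm:3_1} as an essentially immediate consequence of part~(ii) of Lemma~\ref{lem:5}, in exact parallel with the way Theorem~\ref{thm:8} is obtained from Lemma~\ref{lem:9}. Thus the whole task reduces to exhibiting a concrete directed set $\mathcal A$ of cardinality $\cov(\N)$ (and not a cardinal number) together with intervals $I,J$ and a net $(\sigma_A)_{A\in\mathcal A}$ of null sets meeting every hypothesis of Lemma~\ref{lem:5}(ii).

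First I would fix a Hamel basis $\mathcal H$ of $\R$ over $\mathbb Q$, recalling that $\card(\mathcal H)=\mathfrak c$, and choose two disjoint nondegenerate intervals inside $[0,1]$, say $I:=\left[0,\tfrac12\right]$ and $J:=\left(\tfrac12,1\right]$, so that $J\setminus I=J$ has nonempty interior. Next I would take $X\subset\N$ a family of null sets witnessing $\cov(\N)$ with $\bigcup X=J$ (the existence of such a covering of the nondegenerate interval $J$ is exactly as in the proof of Theorem~\ref{thm:1}), and let $\mathcal A$ be the collection of all finite subsets of $X$ ordered by the preorder $\preccurlyeq$ introduced in~$\circ$ before Lemma~\ref{lem:7}. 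Setting $\sigma_A:=\bigcup A$ for each $A\in\mathcal A$, I obtain $\card(\mathcal A)=\card(X)=\cov(\N)$, and $\mathcal A$ is not linearly ordered, hence not a cardinal number.

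The key verification is that $(\sigma_A)_{A\in\mathcal A}$ satisfies the remaining hypotheses of Lemma~\ref{lem:5}(ii). Each $\sigma_A$ is a finite union of null sets, so $\sigma_A\in\N$, and $\sigma_A\subseteq\bigcup X=J=J\setminus I$. For the pointwise convergence $\chi_{\sigma_A}\to\chi_J$ everywhere, note that if $x\notin J$ then $\chi_{\sigma_A}(x)=0=\chi_J(x)$ for every $A$ since $\sigma_A\subseteq J$; and if $x\in J=\bigcup X$ then $x\in Y$ for some $Y\in X$, so taking $A_0:=\{Y\}$ we have $x\in Y=\bigcup A_0\subseteq\bigcup A=\sigma_A$ for every $A$ with $A_0\preccurlyeq A$, whence $\chi_{\sigma_A}(x)$ is eventually $1=\chi_J(x)$. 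With $I,J\subset[0,1]$, all hypotheses of Lemma~\ref{lem:5}(ii) are now in place.

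Finally I would invoke Lemma~\ref{lem:5}: its first (unconditional) part gives that the nets $\{(f_A^h\restriction[0,1])_{A\in\mathcal A}\colon h\in\mathcal H\}$ are algebraically independent, and part~(ii) gives that the algebra they generate is contained in $\mathcal{ND}_{\mathcal A}\cup\{0\}$. Since $\card(\mathcal H)=\mathfrak c$, these $\mathfrak c$ algebraically independent nets freely generate a $\mathfrak c$-generated free subalgebra $B$ of $\left(\mathbb R^{[0,1]}\right)^{\mathcal A}$ with $B\setminus\{0\}\subseteq\mathcal{ND}_{\mathcal A}$, which is precisely strong $\mathfrak c$-algebrability. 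I do not expect a genuine obstacle here, as the analytic heart of the argument is already carried by Lemma~\ref{lem:5}; the only points demanding care are the bookkeeping of the disjointness and containments among $I$, $J$ and $\sigma_A$, and checking that the convergence $\chi_{\sigma_A}\to\chi_J$ holds \emph{everywhere} (not merely a.e.), so that the hypotheses of the lemma are matched exactly.
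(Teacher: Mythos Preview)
Your proposal is correct and follows essentially the same approach as the paper's own proof: both derive the theorem directly from Lemma~\ref{lem:5}(ii) by taking $\mathcal A$ to be the finite subsets of a $\cov(\N)$-sized null-cover of a half of $[0,1]$, ordered by $\preccurlyeq$, and setting $\sigma_A=\bigcup A$. The only cosmetic difference is that the paper chooses $I=\left(\tfrac12,1\right]$ and $J=\left[0,\tfrac12\right]$ (so the null sets cover the left half), whereas you swap the roles of the two halves; this is immaterial to the argument.
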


\begin{proof}
	Let $\mathcal H$ be a Hamel basis of $\R$ over $\mathbb Q$ and, for every $h\in \mathcal H$, let 
	$$
	f_h(x):=e^{h x},
	$$
	for any $x\in [0,1]$.
	
	Take $X\subset \mathcal N$ witnessing $\cov(\N)$ such that $\bigcup X=\left[0,\frac{1}{2} \right]$.
	Let $\mathcal A$ be the set of all finite subsets of $X$ endowed with the binary relation $\preccurlyeq$ defined in $\circ$ before Lemma~\ref{lem:7}.
	Recall that $\card(\mathcal A)=\card(X)=\cov(\N)$.
	
	For every $h\in \mathcal H$, 
	let $(f_A^h)_{A\in \mathcal A}\in \left(\mathbb R^{[0,1]} \right)^{\mathcal A}$ be defined as follows: for each $A\in \mathcal A$,
	$$
	f_A^h:=f_h \cdot \chi_{\left(\frac{1}{2},1 \right] \cup \left(\bigcup A \right)}.
	$$
	
	Since $\sigma_A:= \bigcup A \in \N$ for every $A\in \mathcal A$, $\sigma_A \subset J\setminus I$ with $J:=\left[0,\frac{1}{2} \right]$ and $I:=\left(\frac{1}{2},1 \right]$, for every $A\in \mathcal A$, and $(\chi_{\sigma_A})_{A\in \mathcal A}$ converges pointwise everywhere to $\chi_J$, we have the desired result by the first part and the additional part (ii) of Lemma~\ref{lem:5}. 
\end{proof}

\begin{theorem}\label{thm:3} 
	There exists a directed set $\mathcal A$ that is not a cardinal number with $\card(\mathcal A)=\add(\N)$ such that $\mathcal{AN}_{\mathcal A}$ in $\left(\mathbb R^\R \right)^{\mathcal A}$ is strongly $\mathfrak c$-algebrable.
\end{theorem}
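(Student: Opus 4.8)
The plan is to deduce the statement from Lemma~\ref{lem:2} together with the first conclusion and part~(i) of Lemma~\ref{lem:5}, reducing everything to the construction of a net of null sets that climbs pointwise to a fixed nonmeasurable set of inner measure $0$. Fix the nondegenerate intervals $I:=\left(\frac12,1\right]$ and $J:=[0,1]$, so that $J\setminus I=\left[0,\frac12\right]$ has nonempty interior. Since $\add(\N)$ can be witnessed by pairwise disjoint null sets lying inside any prescribed nondegenerate interval, fix such a family $X$ with $\bigcup X\subseteq J\setminus I$. Lemma~\ref{lem:2} then supplies a nonmeasurable set $V\subseteq\bigcup X\subseteq J\setminus I$ of inner measure $0$ whose outer measure equals that of $\bigcup X$; in particular $V$ is non-null.

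Next I would set up the index set and net. Writing $X=\{Y_\beta\colon\beta<\add(\N)\}$, put $V_\beta:=V\cap Y_\beta$: each $V_\beta$ is null by completeness of Lebesgue measure, the $V_\beta$ are pairwise disjoint, and $V=\bigcup_{\beta<\add(\N)}V_\beta$. Let $\mathcal A$ be the set of all finite subsets of $\{V_\beta\colon V_\beta\neq\emptyset\}$, endowed with the preorder $\preccurlyeq$ from $\circ$ before Lemma~\ref{lem:7}, and for $A\in\mathcal A$ set $\sigma_A:=\bigcup A$. Three routine checks remain: first, the base family has cardinality exactly $\add(\N)$, since $V$ is non-null and therefore cannot be a union of fewer than $\add(\N)$ null sets, whence $\card(\mathcal A)=\add(\N)$; and as $\mathcal A$ contains $\preccurlyeq$-incomparable elements it is not linearly ordered, hence not a cardinal number. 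Second, each $\sigma_A$ is a finite union of null subsets of $[0,1]$, so $\sigma_A\in\N$ and $\sigma_A\subseteq V\subseteq J\setminus I$. Third, $(\chi_{\sigma_A})_{A\in\mathcal A}$ converges pointwise everywhere to $\chi_V$: if $x\in V$ then $x\in V_{\beta_0}$ for some $\beta_0$ and $\chi_{\sigma_A}(x)=1$ whenever $\{V_{\beta_0}\}\preccurlyeq A$, while if $x\notin V$ then $x\notin\sigma_A$ for every $A$ because $\sigma_A\subseteq V$.

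Finally I would invoke Lemma~\ref{lem:5}. Let $\mathcal H$ be a Hamel basis of $\R$ over $\mathbb Q$, so $\card(\mathcal H)=\mathfrak c$, put $f_h(x):=e^{hx}$, and define $f_A^h:=f_h\cdot\chi_{\sigma_A\cup I}$. The first conclusion of Lemma~\ref{lem:5} gives that the nets $\{(f_A^h)_{A\in\mathcal A}\colon h\in\mathcal H\}$ are algebraically independent, and part~(i) of Lemma~\ref{lem:5} --- applicable precisely because of the second and third checks above, with $V\subseteq J\setminus I$ nonmeasurable of inner measure $0$ --- shows that the algebra they generate is contained in $\mathcal{AN}_{\mathcal A}\cup\{0\}$. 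Together these produce a free algebra on $\mathfrak c$ generators whose nonzero elements all belong to $\mathcal{AN}_{\mathcal A}$, which is exactly strong $\mathfrak c$-algebrability. The one genuinely delicate point is the design of the net: the target limit must be the nonmeasurable $V$ rather than the set $\bigcup X$ (which may carry positive inner measure), and this forces the use of the traces $V_\beta=V\cap Y_\beta$ as building blocks; it is the minimality encoded in $\add(\N)$ that simultaneously lets the pointwise limit be nonmeasurable and pins down the cardinality of $\mathcal A$.
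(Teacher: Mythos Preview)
Your proposal is correct and follows essentially the same route as the paper's own proof: both take a pairwise-disjoint family witnessing $\add(\mathcal N)$, intersect with the nonmeasurable set $V$ supplied by Lemma~\ref{lem:2}, use finite subfamilies of the resulting traces under $\preccurlyeq$ as the directed set, and then invoke the first part together with case~(i) of Lemma~\ref{lem:5}. The only cosmetic difference is your choice of intervals $I=(1/2,1]$, $J=[0,1]$ in place of the paper's $I=(1,2)$, $J=[0,1]$, which is immaterial to the argument.
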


\begin{proof}
	Let $\mathcal H$ be a Hamel basis of $\R$ over $\mathbb Q$.
	For every $h\in \mathcal H$, let 
	$$
	f_h(x):=e^{h x},
	$$
	for any $x\in \R$.
	
	Take $X\subset \N$ witnessing $\add(\N)$ such that each set that belongs to $X$ is contained in 
	$[0,1]$ and the sets of $X$ are pairwise disjoint.
	By Lemma~\ref{lem:2}, there is a nonmeasurable $V\subset \bigcup X$ that has inner measure $0$ and same outer measure as $\bigcup X$. 
	Now let $X^\prime$ be the family $\{Y\cap V \colon Y\in X \}$.
	Cleary $V=\bigcup X^\prime$.
	
	Let $\mathcal A$ be the set of all finite subsets of $X^\prime$ endowed with the binary relation $\preccurlyeq$ defined in $\circ$ before Lemma~\ref{lem:7}.
	Notice that $\card(\mathcal A)=\card(X^\prime)= \card(X)=\add(\N)$.
	Indeed, observe that $X^\prime$ is infinite since otherwise $V$ would be a null set, so $\card(\mathcal A)=\card(X^\prime)$.
	On the other hand, it is clear that $\card(X^\prime)\leq \add(\N)$.
	If we assume that $\card(X^\prime)< \add(\N)$, then $V\in \N$ by definition of $\add(\N)$, which is absurd.
	
	For every $h\in \mathcal H$, let $(f_A^h)_{A\in \mathcal A}\in \left(\mathbb R^\R \right)^{\mathcal A}$
	be defined as follows: for each $A\in \mathcal A$,
	$$
	f_A^h:=f_h \cdot \chi_{(1,2) \cup \left(\bigcup A \right)}.
	$$
	Since $\sigma_A:=\bigcup A \in \N$ for every $A\in \mathcal A$, $\sigma_A\subset J\setminus I$ with $J:=[0,1]$ and $I:=(1,2)$, for every $A\in \mathcal A$, and $(\chi_{\sigma_A})_{A\in \mathcal A}$ converges pointwise everyhwere to $\chi_V$ where $V\subset J\setminus I$ is nonmeasurable and has inner measure $0$, we have the desired result by the first part and the additional part (i) of Lemma~\ref{lem:5}.
\end{proof}

Finally, to finish this section, we will prove in Theorem~\ref{thm:2} that by (maybe) increasing the size of the directed set $\mathcal A$ in $\mathcal{AN_A}$ we can improve the algebraic structure of Theorem~\ref{thm:3}.
In particular, we will show that there is a directed set $\mathcal A$ of cardinality $\cov(\N)$ that is not a cardinal number such that $\mathcal{AN_A}$ is strongly $2^{\mathfrak c}$-algebrable.

For Theorems~\ref{thm:2}, \ref{thm:6_1_1} and~\ref{thm:6_1_2}, we will use Theorem~\ref{CiRoSe} and Lemma~\ref{lem:1} below. 
Theorem~\ref{CiRoSe} can be deduced from \cite{CiRoSe} and, for simplicity, we will use the notation from \cite{CiRoSe}.
For any $n\in \mathbb N$, let $\mathcal P^n$ denote the family of polynomials in $n$ variables, with real coefficients and without independent term; and $n^\mathbb N:=\{0,\ldots,n-1 \}^{\mathbb N}$.
Observe that $\mathcal P:=\bigcup_{n\in \mathbb N} \mathcal P^n \times n^{\mathbb N}$ has cardinality $\mathfrak c$ and the Stone-Čech compactification $\beta \mathbb N$ of $\mathbb N$ has cardinality $2^{\mathfrak c}$ (see \cite{En} for more information on the Stone-Čech compactification).

\begin{theorem}[\cite{CiRoSe}]\label{CiRoSe}
	There exists a family of $2^{\mathfrak c}$-many algebraically independent functions $\{g_{\mathcal U}\colon \mathcal U\in \beta \mathbb N \}$ and $\mathfrak c$-many distinct real numbers $\{x_{P,q} \colon \la P,q \ra\in \mathcal P \}$ such that:	
		\begin{itemize}
			\item[$\star$] For distinct $\mathcal U_1,\ldots,\mathcal U_n\in \beta \mathbb N$ and $P\in \mathcal P^n$, there exists $q\in n^{\mathbb N}$ with $P(g_{\mathcal U_1},\ldots,g_{\mathcal U_n})(x_{P,q})\neq 0$.
		\end{itemize}
\end{theorem}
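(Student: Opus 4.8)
The plan is to realize each $g_{\mathcal U}$ as a pointwise ultralimit along the ultrafilter $\mathcal U$ of a bounded, finitely-valued sequence tailored to each witness point. First I would identify $\beta\mathbb N$ with the space of all ultrafilters on $\mathbb N$, so that for every bounded real sequence $(a_k)_{k\in\mathbb N}$ and every $\mathcal U\in\beta\mathbb N$ the ultralimit $\lim_{\mathcal U}a_k$ is a well-defined real number, and I would record the elementary fact that if $(a_k)$ takes only finitely many values then $\lim_{\mathcal U}a_k$ equals the common value of $(a_k)$ on the unique ``colour class'' lying in $\mathcal U$. I would then fix once and for all an injection $\langle P,q\rangle\mapsto x_{P,q}$ of $\mathcal P$ into $\mathbb R$, which is possible since $\card(\mathcal P)=\mathfrak c$; this produces the required $\mathfrak c$-many distinct reals, and I will take the common domain of all the $g_{\mathcal U}$ to be this set of points (extending by $0$ elsewhere if a function on all of $\mathbb R$ is wanted).

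Next I would set up the values. For each $n\in\mathbb N$ and each \emph{nonzero} $P\in\mathcal P^n$, a nonzero polynomial cannot vanish identically on $\mathbb R^n$, so I fix a tuple $s^P=(s^P_0,\dots,s^P_{n-1})\in\mathbb R^n$ with $P(s^P_0,\dots,s^P_{n-1})\neq 0$. For every colouring $q\in n^{\mathbb N}$ I define the bounded sequence $a^{P,q}_k:=s^P_{q(k)}$ (and any fixed sequence in the degenerate case $P=0$), and I declare $g_{\mathcal U}(x_{P,q}):=\lim_{\mathcal U}a^{P,q}_k$ for all $\mathcal U\in\beta\mathbb N$. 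By the finitely-valued remark above, $g_{\mathcal U}(x_{P,q})=s^P_j$, where $j$ is the unique colour with $q^{-1}(j)\in\mathcal U$.

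The heart of the argument is the verification of $\star$. Given distinct $\mathcal U_1,\dots,\mathcal U_n$ and nonzero $P\in\mathcal P^n$, I would invoke the standard separation lemma for ultrafilters: $n$ pairwise distinct ultrafilters admit pairwise disjoint sets $A_1,\dots,A_n\subseteq\mathbb N$ with $A_i\in\mathcal U_i$. Colouring $\mathbb N$ by $q(k):=i-1$ for $k\in A_i$ and $q(k):=0$ off $\bigcup_iA_i$ yields $q\in n^{\mathbb N}$ with $q^{-1}(i-1)\in\mathcal U_i$ for every $i$, whence $g_{\mathcal U_i}(x_{P,q})=s^P_{i-1}$ and therefore $P(g_{\mathcal U_1},\dots,g_{\mathcal U_n})(x_{P,q})=P(s^P_0,\dots,s^P_{n-1})\neq 0$. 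This is exactly $\star$, and it at once forces $\{g_{\mathcal U}\colon\mathcal U\in\beta\mathbb N\}$ to be algebraically independent (take $P$ to be a witnessing polynomial); applying it to $P(y_1,y_2)=y_1-y_2$ also shows the $g_{\mathcal U}$ are pairwise distinct, so the family has cardinality $\card(\beta\mathbb N)=2^{\mathfrak c}$.

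I expect the main obstacle to be the bookkeeping in the separation step: matching the ordering of the arguments of $P$ with the colour classes so that each $g_{\mathcal U_i}$ lands on the intended coordinate $s^P_{i-1}$, and checking that the residual colour $0$ assigned off $\bigcup_iA_i$ does not spoil the relations $q^{-1}(i-1)\in\mathcal U_i$ (it only enlarges $q^{-1}(0)$, which already contains $A_1\in\mathcal U_1$). Everything else---existence of ultralimits, the non-identical-vanishing of nonzero polynomials, and the cardinality count---is routine, so the proof reduces to combining the finitely-valued ultralimit computation with the ultrafilter separation lemma.
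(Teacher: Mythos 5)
Your proof is correct; the paper itself gives no proof of this statement (it only imports it from \cite{CiRoSe}), and your construction---realizing each $g_{\mathcal U}$ as the $\mathcal U$-ultralimit of the finitely-valued sequences $s^P_{q(k)}$ and verifying $\star$ via the disjoint-sets separation of finitely many distinct ultrafilters---is essentially the argument of that reference. The only point worth noting, which you handled correctly, is that $\star$ must be read as quantifying over \emph{nonzero} $P\in\mathcal P^n$, since it fails trivially for $P=0$.
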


The existence of the family $\{x_{P,q} \colon \la P,q \ra\in \mathcal P \}$ can be found in \cite[lemma 2.5]{CiRoSe}, and the family of algebraic independent functions $\{g_{\mathcal U}\colon \mathcal U\in \beta \mathbb N \}$ can be deduced from the proof of \cite[theorem 3.1]{CiRoSe}.

Since $\card(\mathfrak c\times \mathcal P)=\mathfrak c$, there exists a family $\{B_{\xi,P,q}^\R \colon \la \xi, \la P,q \ra \ra \in \mathfrak c\times \mathcal P \}$ of disjoint Bernstein subsets of $\R$.
For any $\xi<\mathfrak c$, let $B_\xi^\R:=\bigcup_{\la P,q \ra \in \mathcal P} B_{\xi,P,q}^\R$.
Clearly $\{B_\xi^\R \colon \xi< \mathfrak c \}$ is also a family of disjoint Bernstein subsets of $\R$.
Define $\varphi \colon \R\to \R$ so that  $\varphi[B_{\xi,P,q}^\R]=\{x_{P,q} \}$ for  
every $\la \xi,\la P,q \ra \ra \in \mathfrak c\times \mathcal P$.

\begin{lemma}\label{lem:1}
	Let $\psi\colon \beta \mathbb N \to 2^{\mathfrak c}$ be a bijection, $V_{\psi(\mathcal U)}^\R$ the sets defined in \eqref{equ:1} at the beginning of Section~\ref{sec:2}, $\mathcal A$ a directed set and $\left(\sigma_A^{\mathcal U}\right)_{A\in \mathcal A}$ a net of subsets of $\R$ for every $\mathcal U\in \beta \mathbb N$.
	If $\left(\chi_{\sigma_A^{\mathcal U}}\right)_{A\in \mathcal A}$ converges pointwise everywhere to $\chi_{V_{\psi(\mathcal U)}^\R}$ for every $\mathcal U\in \beta \mathbb N$, then the nets in 
		$$
		\mathcal S:=\left\{ \left((g_{\mathcal U} \circ \varphi) \cdot \chi_{\sigma_A^{\mathcal U}}\right)_{A\in \mathcal A} \colon \mathcal U\in \beta\mathbb N \right\}
		$$ 
	are algebraically independent, where $\varphi$ is the function defined in the paragraph before Lemma~\ref{lem:1} and the functions in $\{g_{\mathcal U} \colon \mathcal U\in \beta \mathbb N \}$ satisfy Theorem~\ref{CiRoSe}.
	
	Moreover, if $\sigma_A^\mathcal U\in \mathcal N$ for every $A\in \mathcal A$ and $\mathcal U\in \beta \mathbb N$, then the algebra generated by $\mathcal S$ is contained in $\mathcal{AN_A}\cup \{0\}$.
\end{lemma}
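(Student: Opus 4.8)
The statement splits into two claims—algebraic independence of the nets in $\mathcal S$, and the inclusion of the generated algebra in $\mathcal{AN}_{\mathcal A}\cup\{0\}$—and the plan is to treat them in that order, since the inclusion will lean on the independence.

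For algebraic independence, I would fix distinct $\mathcal U_1,\dots,\mathcal U_n\in\beta\mathbb N$ and a nonzero $P\in\mathcal P^n$ and exhibit a single index $A_0\in\mathcal A$ and a single point at which the $A_0$-th function of $P\big(((g_{\mathcal U_1}\circ\varphi)\chi_{\sigma_A^{\mathcal U_1}})_A,\dots\big)$ is nonzero. By Theorem~\ref{CiRoSe} applied to $P$ and the $\mathcal U_i$, there is $q\in n^{\mathbb N}$ with $c:=P(g_{\mathcal U_1},\dots,g_{\mathcal U_n})(x_{P,q})\neq 0$. Since $\psi$ is a bijection, the sets $C_{\psi(\mathcal U_1)},\dots,C_{\psi(\mathcal U_n)}$ are distinct members of the independent family, so by Definition~\ref{def:1} there is $\xi_0\in\bigcap_{i=1}^n C_{\psi(\mathcal U_i)}$, whence $B_{\xi_0}^{\R}\subseteq V_{\psi(\mathcal U_i)}^{\R}$ for every $i$. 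Picking any $y\in B_{\xi_0,P,q}^{\R}$, we get $\varphi(y)=x_{P,q}$ and $\chi_{V_{\psi(\mathcal U_i)}^{\R}}(y)=1$ for all $i$; the hypothesis $(\chi_{\sigma_A^{\mathcal U_i}})_A\to\chi_{V_{\psi(\mathcal U_i)}^{\R}}$ pointwise, together with directedness of $\mathcal A$, then produces an $A_0$ with $\chi_{\sigma_{A_0}^{\mathcal U_i}}(y)=1$ for all $i$ simultaneously (a $\{0,1\}$-valued net converging to $1$ is eventually $1$). Evaluating the $A_0$-th function at $y$ collapses to $c\neq0$, so the net is not the zero net, which is exactly algebraic independence.

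For the inclusion, a nonzero element of the algebra is, after collecting terms, a net $(F_A)_{A\in\mathcal A}$ with $F_A=P\big((g_{\mathcal U_1}\circ\varphi)\chi_{\sigma_A^{\mathcal U_1}},\dots\big)$ for some nonzero $P\in\mathcal P^n$ and distinct $\mathcal U_i$ (the correspondence to a nonzero $P$ being guaranteed by the first part). Measurability of each $F_A$ is immediate: since every $\sigma_A^{\mathcal U_i}\in\mathcal N$ and $P$ has no free term, $F_A$ vanishes off the null set $\bigcup_i\sigma_A^{\mathcal U_i}$, so $F_A=0$ a.e. The same eventual-constancy observation shows, for each fixed $x$, that $(F_A(x))_A$ is eventually constant, so $(F_A)_A$ converges pointwise everywhere to $F:=P\big((g_{\mathcal U_1}\circ\varphi)\chi_{V_{\psi(\mathcal U_1)}^{\R}},\dots\big)$. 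It remains to prove that $F$ is nonmeasurable, which I expect to be the main obstacle.

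I would settle the nonmeasurability by playing two Bernstein sets against each other. Using independence once more, choose $\xi_0\in\bigcap_i C_{\psi(\mathcal U_i)}$ and $\xi_1\in\bigcap_i(\mathfrak c\setminus C_{\psi(\mathcal U_i)})$. On $B_{\xi_0,P,q}^{\R}$ every $\chi_{V_{\psi(\mathcal U_i)}^{\R}}$ equals $1$ and $\varphi\equiv x_{P,q}$, so $F\equiv c\neq0$ there; on $B_{\xi_1}^{\R}$ every $\chi_{V_{\psi(\mathcal U_i)}^{\R}}$ equals $0$, so $F\equiv0$ there (again because $P$ has no free term). Both $B_{\xi_0,P,q}^{\R}$ and $B_{\xi_1}^{\R}$ meet every perfect subset of $\R$, being Bernstein sets (or supersets of one). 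The key elementary fact is that a measurable set $M$ containing a set that meets every perfect set must be co-null: if $\R\setminus M$ had positive measure it would contain a perfect set disjoint from $M$, contradicting the meeting property. Were $F$ measurable, applying this to $\{F=c\}\supseteq B_{\xi_0,P,q}^{\R}$ and to $\{F=0\}\supseteq B_{\xi_1}^{\R}$ would force both to be co-null; but they are disjoint since $c\neq0$, which is impossible. Hence $F$ is nonmeasurable, $(F_A)_A\in\mathcal{AN}_{\mathcal A}$, and the generated algebra lies in $\mathcal{AN}_{\mathcal A}\cup\{0\}$.
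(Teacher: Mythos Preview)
Your proof is correct and follows essentially the same route as the paper's: you use independence of the $C_\alpha$'s to locate a Bernstein set inside $\bigcap_i V_{\psi(\mathcal U_i)}^{\R}$, invoke Theorem~\ref{CiRoSe} to pick the right $q$, and use the pointwise convergence (your ``$\{0,1\}$-valued nets are eventually constant'' observation is exactly the paper's claim~$\triangle$ in compressed form) both for algebraic independence and for identifying the limit $F$. Your nonmeasurability argument---two disjoint Bernstein sets forcing two disjoint level sets to be simultaneously co-null---is the same as the paper's, which instead phrases it as $F^{-1}(0)$ and its complement each containing a Bernstein set and hence each having full outer measure.
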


\begin{proof}
	Let $\mathcal U_1,\ldots,\mathcal U_n \in \beta \mathbb N$ be distinct.
	We will begin by proving the first part.
	To do so, we will prove first the following:
	\begin{itemize}
		\item[$\triangle$] If 
			$$
			x_0\in V_m:=\bigcap_{j=1}^m V_{\psi(\mathcal U_{i_j})}^\R \setminus \bigcup_{i\in \{1,\ldots,n \}\setminus \{i_1,\ldots,i_m \}} V_{\psi(\mathcal U_i)}^\R,
			$$
		with $m\in \{1,\ldots,n \}$ and $i_1,\ldots,i_m\in \{1,\ldots,n \}$ distinct, there is an $A_0\in \mathcal A$ such that 
			$$
			x_0\in V_m^A:=\bigcap_{j=1}^m \sigma_A^{\mathcal U_{i_j}} \setminus \bigcup_{i\in \{1,\ldots,n \}\setminus \{i_1,\ldots,i_m \}} \sigma_A^{\mathcal U_i}
			$$ 
		for every $A\in \mathcal A$ with $A\geq A_0$.
		If $x_0\in \R\setminus \bigcup_{i=1}^n V_{\psi(\mathcal U_i)}^\R$, then there is an $A_0\in \mathcal A$ such that $x_0 \notin \sigma_A^{\mathcal U_i}$ for every $A\in \mathcal A$ with  $A\geq A_0$ and every $i\in \{1,\ldots,n \}$.
	\end{itemize}
	If $x_0\in V_m$, then, since $\left(\chi_{\sigma_A^{\mathcal U_i}}\right)_{A\in \mathcal A}$ converges pointwise everywhere to $\chi_{V_{\psi(\mathcal U_i)}^\R}$ for every $i\in \{1,\ldots,n \}$, we have that for every $j\in \{1,\ldots,m \}$ there is an $A_{i_j} \in \mathcal A$ such that $x_0\in \sigma_A^{\mathcal U_{i_j}}$ for every $A\in \mathcal A$ with $A\geq A_{i_j}$, and for every $i\in \{1,\ldots,n \}\setminus \{1,\ldots,m \}$ there is an $A_i \in \mathcal A$ such that $x_0\notin \sigma_A^{\mathcal U_i}$ for every $A\in \mathcal A$ with $A\geq A_i$.
	Now, as $\mathcal A$ is directed set we can take $A_0\in \mathcal A$ such that $A_0\geq A_i$ for every $i\in \{1,\ldots,n \}$.
	Hence, $x_0\in V_m^A$ for every $A\geq A_0$.
	Analogously observe that if $x_0\in \R\setminus \bigcup_{i=1}^n V_{\psi(\mathcal U_i)}^\R$, then there is an $A_0\in \mathcal A$ such that $x_0 \notin \sigma_A^{\mathcal U_i}$ for every $A\geq A_0$ and $i\in \{1,\ldots,n \}$.
	
	Let $P\in \mathcal P^n$.
	By definition of a family of independent subsets, notice that there exists $\xi<\mathfrak c$ such that $B_\xi^\R \subset \bigcap_{i=1}^n V_{\psi(\mathcal U_i)}^\R$.
	Let $q\in n^{\mathbb N}$ witness $\star$ in Theorem~\ref{CiRoSe} and take any arbitrary $x\in B_{\xi,P,q}^\R \subset B_\xi^\R \subset \bigcap_{i=1}^n V_{\psi(\mathcal U_i)}^\R$.
	Recall that $\varphi(x)=x_{P,q}$.
	By $\star$ in Theorem~\ref{CiRoSe} and $\triangle$, there is an $A_0\in \mathcal A$ such that
	\begin{align*}
		P(g_{\mathcal U_1} (\varphi(x)) \cdot \chi_{\sigma_{A_0}^{\mathcal U_1}}(x),\ldots,g_{\mathcal U_n} (\varphi(x)) \cdot \chi_{\sigma_{A_0}^{\mathcal U_n}}(x))= P(g_{\mathcal U_1} (x_{P,q}),\ldots,g_{\mathcal U_n} (x_{P,q}))\neq 0.
	\end{align*}

	For the second part, observe that
		\begin{equation}\label{equ:2}
			\left(P\left(\left(g_{\mathcal U_1} \circ \varphi\right) \cdot \chi_{\sigma_A^{\mathcal U_1}},\ldots,\left(g_{\mathcal U_n} \circ \varphi\right) \cdot \chi_{\sigma_A^{\mathcal U_n}}\right)\right)_{A\in \mathcal A}
		\end{equation}
	is a net of measurable functions since $\sigma_A^{\mathcal U_i}\in \N$ for every $i\in \{1,\ldots,n \}$ and $A\in \mathcal A$.
	Moreover, by $\triangle$, the net \eqref{equ:2} converges pointwise everywhere to 
		$$
		F:=P\left(\left(g_{\mathcal U_1} \circ \varphi\right) \cdot \chi_{V_{\psi(\mathcal U_1)}^\R},\ldots,\left(g_{\mathcal U_n} \circ \varphi\right) \cdot \chi_{V_{\psi(\mathcal U_n)}^\R}\right).
		$$
	It remains to prove that $F$ is nonmeasurable.
	%	\ck{I did not check all details of the proof of Thm 2.7. But it looks OK.\\}
	Note that by construction $F(x)\neq 0$ for any $x\in B_{\xi,P,q}^\R$, that is, $B_{\xi,P,q}^\R \subseteq \R \setminus F^{-1}(0)$.
	Once again by definition of a family of independent subsets, there exists $\xi_0<\mathfrak c$ such that $B_{\xi_0}^\R \subset \R\setminus \bigcup_{i=1}^n V_{\psi(\mathcal U_i)}^\R$. 
	Hence, $B_{\xi_0}^\R\cap B_{\xi,P,q}^\R= \emptyset$ and $B_{\xi_0}^\R \subseteq F^{-1}(0)$.
	As $B_\xi$ and $B_{\xi_0}$ are disjoint and have full outer measure in $\R$, $F^{-1}(0)$ is nonmeasurable, so $F$ is nonmeasurable.
\end{proof}

\begin{theorem}\label{thm:2}
	There exists a directed set $\mathcal A$ that is not a cardinal number with $\card(\mathcal A)=\cov(\N)$ such that $\mathcal{AN}_{\mathcal A}$ in $\left(\mathbb R^\R \right)^{\mathcal A}$ is strongly $2^{\mathfrak c}$-algebrable.
\end{theorem}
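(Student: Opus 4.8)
The plan is to reduce Theorem~\ref{thm:2} to a direct application of Lemma~\ref{lem:1}, so that essentially all of the work goes into manufacturing a single directed set $\mathcal A$ of cardinality $\cov(\N)$ together with $2^{\mathfrak c}$-many nets of null sets $(\sigma_A^{\mathcal U})_{A\in\mathcal A}$, one for each $\mathcal U\in\beta\mathbb N$, whose characteristic functions converge pointwise everywhere to the characteristic functions of the Bernstein sets $V_{\psi(\mathcal U)}^\R$ from \eqref{equ:1}. Fix at the outset a bijection $\psi\colon\beta\mathbb N\to 2^{\mathfrak c}$, which exists since $\card(\beta\mathbb N)=2^{\mathfrak c}$.

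First I would fix a family $X$ of Lebesgue null subsets of $\R$ with $\bigcup X=\R$ and $\card(X)=\cov(\N)$; such a family exists because $[0,1]$ can be covered by $\cov(\N)$-many null sets by the definition of $\cov(\N)$, and then $\R=\bigcup_{n\in\mathbb Z}(n+[0,1])$ is covered by $\omega\cdot\cov(\N)=\cov(\N)$-many translates of those sets. Let $\mathcal A$ be the set of all finite subsets of $X$ endowed with the preorder $\preccurlyeq$ from $\circ$ before Lemma~\ref{lem:7}; exactly as there, $\mathcal A$ is a directed set that is not linearly ordered, hence not a cardinal number, and $\card(\mathcal A)=\card(X)=\cov(\N)$ since $X$ is infinite.

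The key construction is to set, for every $\mathcal U\in\beta\mathbb N$ and every $A\in\mathcal A$,
\[
\sigma_A^{\mathcal U}:=\Big(\bigcup A\Big)\cap V_{\psi(\mathcal U)}^\R .
\]
Then each $\sigma_A^{\mathcal U}$ is a subset of the null set $\bigcup A$, hence null by completeness of Lebesgue measure, which is the standing hypothesis of the second part of Lemma~\ref{lem:1}. It remains to verify the convergence hypothesis, namely that $(\chi_{\sigma_A^{\mathcal U}})_{A\in\mathcal A}$ converges pointwise everywhere to $\chi_{V_{\psi(\mathcal U)}^\R}$, which I would check pointwise in two cases. If $x\notin V_{\psi(\mathcal U)}^\R$, then $x\notin\sigma_A^{\mathcal U}$ for every $A$, so the net is constantly $0$; and if $x\in V_{\psi(\mathcal U)}^\R$, then, using $\bigcup X=\R$, I pick $Y\in X$ with $x\in Y$ and observe that for every $A\succcurlyeq\{Y\}$ we have $x\in\bigcup A$ and hence $x\in\sigma_A^{\mathcal U}$, so the net is eventually $1$. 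In either case the limit equals $\chi_{V_{\psi(\mathcal U)}^\R}(x)$. With both hypotheses in place, Lemma~\ref{lem:1} applies verbatim: the nets in $\mathcal S=\{((g_{\mathcal U}\circ\varphi)\cdot\chi_{\sigma_A^{\mathcal U}})_{A\in\mathcal A}\colon\mathcal U\in\beta\mathbb N\}$ are algebraically independent and the algebra they generate is contained in $\mathcal{AN}_{\mathcal A}\cup\{0\}$. Since algebraic independence of these nets is exactly the statement that they are free generators, this algebra is a free algebra on $\card(\mathcal S)=\card(\beta\mathbb N)=2^{\mathfrak c}$ generators whose nonzero elements all lie in $\mathcal{AN}_{\mathcal A}$, which is precisely strong $2^{\mathfrak c}$-algebrability.

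The main obstacle is conceptual rather than computational: one must see that a single common directed set can simultaneously reach all $2^{\mathfrak c}$ of the uncountably-indexed Bernstein targets. This is exactly what the intersection trick accomplishes --- the generic null exhaustion $(\bigcup A)_{A\in\mathcal A}$ of $\R$ sweeps out every subset of $\R$ in the limit, so intersecting it with each $V_{\psi(\mathcal U)}^\R$ yields a null net converging to that Bernstein set, all against the same index set $\mathcal A$ of size $\cov(\N)$. Once this is recognized, the nonmeasurability of the limit functions and the algebraic independence of the generators are entirely furnished by Lemma~\ref{lem:1} and Theorem~\ref{CiRoSe}, and no further estimates are needed.
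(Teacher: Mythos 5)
Your proposal is correct and follows essentially the same route as the paper: the paper's proof also takes $X\subset\mathcal N$ witnessing $\cov(\N)$ with $\bigcup X=\R$, forms $\mathcal A$ as the finite subsets of $X$ under $\preccurlyeq$, sets $\sigma_A^{\mathcal U}:=V_{\psi(\mathcal U)}^\R\cap\bigl(\bigcup A\bigr)$, and concludes by Lemma~\ref{lem:1}. Your added verifications (existence of $X$ covering all of $\R$ and the two-case pointwise convergence check) are details the paper leaves implicit but are carried out correctly.
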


\begin{proof}
	Let $\psi \colon \beta \mathbb N\to 2^{\mathfrak c}$ be a bijection.
	Take $X\subset \N$ witnessing $\cov(\N)$ such that $\R=\bigcup X$.
	Let $\mathcal A$ be the family of all finite subsets of $X$ endowed with the binary relation $\preccurlyeq$ defined in $\circ$ before Lemma~\ref{lem:7}.
	Hence $\mathcal A$ is a directed set that is not linearly ordered such that $\card(\mathcal A)=\cov(\N)$.
	For every $\mathcal U\in \beta \mathbb N$, let $(f_A^{\mathcal U})_{A\in \mathcal A}\subset \R^\R$ be 
	the
%	\ck{Something wrong:  $V_{\psi(\mathcal U)}^\R$ is not correctly defined, since 
%	\\} 
	following net: for each $A\in \mathcal A$ and every $x\in \R$,
		$$
		f_A^{\mathcal U}(x)=g_{\mathcal U} (\varphi(x)) \cdot \chi_{V_{\psi(\mathcal U)}^\R\cap \left(\bigcup A\right)}(x),
		$$
	where the sets $V_{\psi(\mathcal U)}^\R$ are defined in \eqref{equ:1} at the beginning of Section~\ref{sec:2}, the functions $g_{\mathcal U}$ satisfy Theorem~\ref{CiRoSe}, and $\varphi$ is defined in the paragraph before Lemma~\ref{lem:1}.
	
	Since $\sigma_A^{\mathcal U}:=V_{\psi(\mathcal U)}^\R\cap \left(\bigcup A\right)\in \mathcal N$ for every $A\in \mathcal A$ and $\mathcal U\in \beta\mathbb N$, and $\left(\chi_{\sigma_A^{\mathcal U}}\right)_{A\in \mathcal A}$ converges pointwise everywhere to $\chi_{V_{\psi(\mathcal U)}^\R}$, we have the desired result by Lemma~\ref{lem:1}.
\end{proof}

\section{On uncountable sequences of functions}\label{sec:3}

Let us begin by showing that only $\kappa$-sequences when $\kappa$ is a regular cardinal are of true interest.

\begin{proposition}\label{prop:1}
	Let $\kappa$ be an infinite cardinal number, $X$ a topological space, $(x_\alpha)_{\alpha<\kappa}$ a $\kappa$-sequence in $X$ and
%	\ck{The proposition is true for ANY topological space $X$.\\} 
%	\ch{\textcolor{blue}{Yes, I know. I only considered metric spaces because it is the setting that I am working on. I changed the definition of convergence of transfinite sequences to include arbitrary topological spaces and Proposition \ref{prop:1}.}\\}
		\begin{align*}
		f \colon \cof(\kappa) &\to \kappa\\
		\xi &\mapsto f(\xi)=:\alpha_\xi
		\end{align*}
	a strictly increasing cofinal map in $\kappa$.
	If $(x_\alpha)_{\alpha<\kappa}$ converges to $x\in X$, then $(x_{\alpha_\xi})_{\xi<\cof(\kappa)}$ converges to $x$.
\end{proposition}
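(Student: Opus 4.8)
The plan is to unwind the definition of convergence for $\kappa$-sequences and then exploit the two defining features of the map $f$: that it is strictly increasing and that its range $\{\alpha_\xi:\xi<\cof(\kappa)\}$ is cofinal in $\kappa$. Convergence of $(x_\alpha)_{\alpha<\kappa}$ to $x$ means that for every neighborhood $U^x$ of $x$ there is a threshold $\alpha_0<\kappa$ beyond which the whole sequence stays inside $U^x$. The goal is to manufacture, from each such $U^x$, a corresponding threshold $\xi_0<\cof(\kappa)$ witnessing that the reindexed sequence $(x_{\alpha_\xi})_{\xi<\cof(\kappa)}$ is eventually in $U^x$; since $U^x$ is arbitrary, this is exactly convergence of the reindexed sequence to $x$.

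Concretely, I would fix an arbitrary neighborhood $U^x$ of $x$ and use convergence of the original $\kappa$-sequence to obtain $\alpha_0<\kappa$ with $x_\alpha\in U^x$ for all $\alpha$ satisfying $\alpha_0<\alpha<\kappa$. Next I would invoke the cofinality of $f$: because $\kappa$ is a limit ordinal and the range of $f$ is unbounded in $\kappa$, and because $\alpha_0<\kappa$, there exists $\xi_0<\cof(\kappa)$ with $\alpha_{\xi_0}=f(\xi_0)>\alpha_0$. Finally I would use strict monotonicity of $f$: for every $\xi$ with $\xi_0<\xi<\cof(\kappa)$ one has $\alpha_\xi=f(\xi)>f(\xi_0)=\alpha_{\xi_0}>\alpha_0$, whence $x_{\alpha_\xi}\in U^x$ by the choice of $\alpha_0$. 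Thus $\xi_0$ is the desired threshold, and the reindexed sequence converges to $x$.

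The argument is essentially a formal verification matching the quantifier structure of the two convergence definitions, so I do not expect a genuine obstacle. The only point deserving a word of care is the extraction of $\xi_0$ with a strict inequality $f(\xi_0)>\alpha_0$: this is immediate here since a cofinal subset of a limit ordinal is unbounded, so some value of $f$ strictly exceeds $\alpha_0$, and strict monotonicity then propagates the bound $\alpha_\xi>\alpha_0$ to all indices $\xi>\xi_0$ at once.
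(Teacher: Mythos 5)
Your proof is correct and follows essentially the same argument as the paper's: fix a neighborhood, extract the threshold $\alpha_0$ from convergence of the original $\kappa$-sequence, use cofinality of $f$ to find $\xi_0$ with $f(\xi_0)$ at or above $\alpha_0$, and propagate via strict monotonicity. The only cosmetic difference is that you arrange $f(\xi_0)>\alpha_0$ strictly while the paper settles for $f(\xi_0)\geq\alpha_0$ and gets strictness from monotonicity at the next step; both are fine.
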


\begin{proof}
	Fix a neighborhood $U^x\subset X$ of $x$ arbitrary.
	Since $(x_\alpha)_{\alpha<\kappa}$ converges to $x$, there is an $\alpha_0<\kappa$ such that $x_\alpha \in U^x$ for any $\alpha>\alpha_0$ with $\alpha<\kappa$.
	As $f$ is a cofinal map in $\kappa$, there exists $\xi_0<\cof(\kappa)$ such that $\alpha_{\xi_0}=f(\xi_0)\geq \alpha_0$.
	Moreover, since $f$ is strictly increasing, for every $\xi>\xi_0$ with $\xi<\cof(\kappa)$ we have that $\alpha_\xi=f(\xi)>f(\xi_0)=\alpha_{\xi_0}\geq \alpha_0$.
	Hence, for any $\xi>\xi_0$ with $\xi<\cof(\kappa)$, we arrive at $x_{\alpha_\xi}\in U^x$ where $U^x$ is an arbitrary neighborhood of $x$, that is, $(x_{\alpha_\xi})_{\xi<\cof(\kappa)}$ converges to $x$.
\end{proof}

In view of Proposition~\ref{prop:1}, the convergence of $\kappa$-sequences $(x_\alpha)_{\alpha<\kappa}$ for $\kappa\geq \omega$ can be reduced to studying the convergence of the $\cof(\kappa)$-subsequence $(x_{\alpha_\xi})_{\xi<\cof(\kappa)}$, since it also converges to the same point.
Moreover Proposition~\ref{prop:1} is trivially an equivalence if $\kappa$ is regular.
However, if $\kappa$ is singular, then the reverse implication in Proposition~\ref{prop:1} may not be true even in the real line as the following result shows.

\begin{proposition}\label{prop:2}
	Assume that $\mathfrak c$ is a singular cardinal.
	If 
		\begin{align*}
		f \colon \cof(\mathfrak c) &\to \mathfrak c\\
		\xi &\mapsto f(\xi)=:\alpha_\xi
		\end{align*}
	is a strictly increasing cofinal map in $\mathfrak c$,
	then there exists a $\mathfrak c$-sequence $(x_\alpha)_{\alpha<\mathfrak c}$ in $\R$ that does not converge to $0$ such that $(x_{\alpha_\xi})_{\xi<\cof(\mathfrak c)}$ converges to $0$.
\end{proposition}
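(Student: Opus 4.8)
The plan is to make the sequence vanish exactly along the cofinal map $f$ and take a fixed nonzero value everywhere else. Concretely, I would set $R:=\rng(f)=\{\alpha_\xi:\xi<\cof(\mathfrak c)\}$ and define
$x_\alpha:=0$ for $\alpha\in R$ and $x_\alpha:=1$ for $\alpha\in\mathfrak c\setminus R$.
By construction $x_{\alpha_\xi}=0$ for every $\xi<\cof(\mathfrak c)$, so the subsequence $(x_{\alpha_\xi})_{\xi<\cof(\mathfrak c)}$ is constantly $0$ and hence converges to $0$; this half needs no work at all. All the content therefore lies in showing that the full $\mathfrak c$-sequence $(x_\alpha)_{\alpha<\mathfrak c}$ does not converge to $0$.

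For the non-convergence it suffices to prove that the value $1$ is attained cofinally often, i.e. that $\mathfrak c\setminus R$ is unbounded in $\mathfrak c$. This is precisely where the singularity of $\mathfrak c$ is used: the range $R$ of a map defined on $\cof(\mathfrak c)$ has cardinality at most $\cof(\mathfrak c)$, and $\cof(\mathfrak c)<\mathfrak c$ exactly because $\mathfrak c$ is singular. First I would fix an arbitrary $\beta<\mathfrak c$ and note that the tail $[\beta,\mathfrak c)$ has cardinality $\mathfrak c$ (since $\mathfrak c$ is a cardinal and $\card(\beta)<\mathfrak c$); removing from it the set $R$ of cardinality $<\mathfrak c$ leaves a set of cardinality $\mathfrak c$, in particular nonempty. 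Hence there is $\alpha\in[\beta,\mathfrak c)\setminus R$ with $x_\alpha=1$, so $(x_\alpha)_{\alpha<\mathfrak c}$ is never eventually inside the neighborhood $\left(-\tfrac12,\tfrac12\right)$ of $0$ and therefore fails to converge to $0$.

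The main (indeed the only genuine) obstacle is this cardinality bookkeeping; everything else is immediate from the definition of the sequence. It is worth recording that singularity is essential and the construction is sharp: if $\mathfrak c$ were regular, the identity would itself be a strictly increasing cofinal map whose range exhausts $\mathfrak c$, leaving no room for a nonzero tail. If one preferred, the non-convergence could instead be read off in the spirit of Remark~\ref{rem:1}, since a convergent transfinite sequence in the first countable space $\R$ is eventually constant (the index set $\cof(\mathfrak c)$ having uncountable cofinality by K\"onig's theorem) while our sequence is not eventually constant; but the direct cofinality argument above is shorter and avoids appealing to the regularity of $\cof(\mathfrak c)$.
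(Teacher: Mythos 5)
Your proposal is correct and follows essentially the same route as the paper: define the sequence on the range of $f$ so that the $\cof(\mathfrak c)$-subsequence converges to $0$, and use the fact that $\card(\rng(f))\leq\cof(\mathfrak c)<\mathfrak c$ to place values bounded away from $0$ on a complement that is necessarily cofinal in $\mathfrak c$. Your version even spells out the cardinality bookkeeping (that every tail $[\beta,\mathfrak c)$ meets $\mathfrak c\setminus\rng(f)$) more explicitly than the paper, which merely enumerates $[1,2]$ on the complement and declares non-convergence ``clear.''
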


\begin{proof}
	Let $(x_{\alpha_\xi})_{\xi<\cof(\mathfrak c)}$ be any $\cof(\mathfrak c)$-sequence converging to $0$.
	Since $\cof(\mathfrak c)<\mathfrak c$ we have that $\card(\mathfrak c\setminus \cof(\mathfrak c))=\mathfrak c$.
	Hence we can enumerate $[1,2]$, without repetition, as $\{x_\alpha \colon \alpha\in \mathfrak c\setminus \cof(\mathfrak c) \}$.
	Clearly, the $\mathfrak c$-sequence $(x_\alpha)_{\alpha<\mathfrak c}$ does not converge to $0$.
\end{proof}

In view of Propositions~\ref{prop:1} and~\ref{prop:2}, we will be interested in our cases for practical purposes when the $\kappa$-sequences are such that $\kappa$ is regular.
Thus, for the rest of this paper, we shall assume that the cardinal number $\kappa$ indexing the sequences is chosen to be regular.

In the following four theorems we study the coneability of the families of $\kappa$-sequences $\mathcal{NM}_\kappa$, $\mathcal{NF}_\kappa$, $\mathcal{ND}_\kappa$ and $\mathcal{ANM}_\kappa\setminus \mathcal{ND}_{\kappa}$.
In particular, in Theorem~\ref{thm:4}, we show that $\mathcal{NM}_{\mathfrak c}$, $\mathcal{NF}_{\mathfrak c}$ and $\mathcal{ND}_{\mathfrak c}$ are positively $2^{\mathfrak c}$-coneable assuming that $\non(\N)=\mathfrak c$; while in Theorem~\ref{thm:4_1} it is proven that $\mathcal{NM}_{\add(\N)}$, $\mathcal{NF}_{\add(\N)}$ and $\mathcal{ND}_{\add(\N)}$ are positively $2^{\mathfrak c}$-coneable assuming that $\add(\N)=\cov(\N)$.
In Theorem~\ref{thm:8_1}, we show that $\mathcal{ANM}_{\mathfrak c}\setminus \mathcal{ND}_{\mathfrak c}$ is $2^{\mathfrak c}$-coneable assuming that $\non(\N)=\mathfrak c$; and in Theorem~\ref{thm:8_2}, we prove that $\mathcal{ANM}_{\add(\N)}\setminus \mathcal{ND}_{\add(\N)}$ is $2^{\mathfrak c}$-coneable assuming that $\add(\N)=\cov(\N)$.

In view of the fact that $\mathcal{AN}_{\omega_1}=\emptyset$ if, and only if, $\add(\N)>\omega_1$ (see \cite{Di}, and also \cite{Na1}), it is natural to think that the following results may not be true for the families $\mathcal{NM}_{\omega_1}$, $\mathcal{NF}_{\omega_1}$, $\mathcal{ND}_{\omega_1}$ and $\mathcal{ANM}_{\omega_1}\setminus \mathcal{ND}_{\omega_1}$.

\begin{theorem}\label{thm:4}
	If $\non(\N)=\mathfrak c$, the sets $\mathcal{NM}_{\mathfrak c}$, $\mathcal{NF}_{\mathfrak c}$ and $\mathcal{ND}_{\mathfrak c}$ in $\left(\mathbb R^{[0,1]} \right)^{\mathfrak c}$ are positively $2^{\mathfrak c}$-coneable.
\end{theorem}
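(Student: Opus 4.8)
The plan is to recognize a $\mathfrak c$-sequence as the special case of a net indexed by the directed set $\mathcal A=\mathfrak c$ (the ordinal $\mathfrak c$ with its usual order is linearly ordered, hence directed), and then to reduce everything to Lemma~\ref{lem:7}. Concretely, I would fix a measure-zero Cantor set $\mathfrak{C}\subset\left(\tfrac12,1\right]$ and the nondegenerate interval $I:=\left[0,\tfrac12\right]\subset[0,1]$, which is disjoint from $\mathfrak{C}$, and then produce an $\subseteq$-increasing family $(\sigma_\alpha)_{\alpha<\mathfrak c}$ of Lebesgue-null subsets of $I$ whose characteristic functions converge pointwise everywhere to $\chi_I$. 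Taking $\sigma_A=\sigma_\alpha$ for $A=\alpha<\mathfrak c$, each $\sigma_\alpha\subseteq I$ is disjoint from $\mathfrak{C}$, so the hypotheses of the ``furthermore'' part of Lemma~\ref{lem:7} will all be met, and that lemma immediately yields that the $2^{\mathfrak c}$ nets in $\mathcal B=\{(\chi_{V_\alpha^{\mathfrak C}\cup\sigma_\beta})_{\beta<\mathfrak c}\colon\alpha<2^{\mathfrak c}\}$ are linearly independent while their positive cone is contained in $\mathcal{NM}_{\mathfrak c}$, $\mathcal{NF}_{\mathfrak c}$ and $\mathcal{ND}_{\mathfrak c}$; this is exactly positive $2^{\mathfrak c}$-coneability. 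This mirrors the structure of the proof of Theorem~\ref{thm:1}, with the $\cov(\N)$-indexed family of finite sets replaced by a $\mathfrak c$-indexed chain.

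The key construction, and the only place where the hypothesis $\non(\N)=\mathfrak c$ enters, is the family $(\sigma_\alpha)$. Since $\card(I)=\mathfrak c$, I would enumerate $I=\{x_\beta\colon\beta<\mathfrak c\}$ without repetition and set $\sigma_\alpha:=\{x_\beta\colon\beta<\alpha\}$ for each $\alpha<\mathfrak c$, i.e.\ the proper initial segments of this enumeration. Because $\mathfrak c$ is an initial ordinal, every $\alpha<\mathfrak c$ satisfies $\card(\alpha)<\mathfrak c$, so $\card(\sigma_\alpha)<\mathfrak c=\non(\N)$; by the definition of $\non(\N)$ as the least cardinality of a non-null subset of $[0,1]$, each $\sigma_\alpha$ is therefore Lebesgue-null. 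The chain is $\subseteq$-increasing by construction, and $\bigcup_{\alpha<\mathfrak c}\sigma_\alpha=I$.

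It remains to check pointwise convergence $\chi_{\sigma_\alpha}\to\chi_I$. Fixing $x\in[0,1]$: if $x=x_\beta\in I$ then $x\in\sigma_\alpha$ for every $\alpha>\beta$, so $\chi_{\sigma_\alpha}(x)=1$ eventually; if $x\notin I$ then $x\notin\sigma_\alpha$ for all $\alpha$ since $\sigma_\alpha\subseteq I$, so $\chi_{\sigma_\alpha}(x)=0$. In either case $(\chi_{\sigma_\alpha}(x))_{\alpha<\mathfrak c}$ is eventually constant equal to $\chi_I(x)$, which by Remark~\ref{rem:1} is precisely convergence to $\chi_I(x)$. With all hypotheses of Lemma~\ref{lem:7} verified, the conclusion follows. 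The main obstacle is exactly securing the nullity of all the proper initial segments $\sigma_\alpha$ simultaneously: this is what $\non(\N)=\mathfrak c$ buys, since it forces every set of size $<\mathfrak c$ to be null, and it is the reason the argument (and, as suggested in the discussion preceding the theorem, the statement itself) is expected to break down when $\non(\N)<\mathfrak c$.
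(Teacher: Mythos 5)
Your proposal is correct and follows essentially the same route as the paper: enumerate $\left[0,\tfrac12\right]$ as $\{x_\beta\colon\beta<\mathfrak c\}$, take the initial segments as the null sets $\sigma_\alpha$ (the paper uses $\{x_\gamma\colon\gamma\le\beta\}$ rather than $\{x_\gamma\colon\gamma<\beta\}$, an immaterial difference), invoke $\non(\N)=\mathfrak c$ to see each has cardinality $<\mathfrak c$ and is hence null, and feed this into Lemma~\ref{lem:7} with a Cantor set $\mathfrak C\subset\left(\tfrac12,1\right]$. No gaps.
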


\begin{proof}
	Let $\{x_\beta \colon \beta<\mathfrak c \}$ be an enumeration, without repetition, of $\left[0,\frac{1}{2} \right]$.
	Take $\mathfrak C$ a Cantor set of measure $0$ contained in  $\left(\frac{1}{2},1 \right]$.
	For each $\alpha<2^{\mathfrak c} $, let us define the $\mathfrak c$-sequence of functions $(f_\beta^\alpha)_{\beta < \mathfrak c}$ as follows: for every $\beta<\mathfrak c$,
	$$
	f_\beta^\alpha:=\chi_{V_\alpha^{\mathfrak C} \cup \{x_\gamma \colon \gamma\leq \beta \}},
	$$
	where the sets $V_\alpha^{\mathfrak C}$ are defined in \eqref{equ:1} at the beginning of Section~\ref{sec:2}.
	Observe that $\card(\{x_\gamma \colon \gamma\leq \beta \})=\card(\beta+1)<\mathfrak c=\non(\N)$ (recall that infinite cardinal numbers are limit ordinals), which implies that the set $\{x_\gamma \colon \gamma\leq \beta \}$ is a null set. 
	
	Since $\sigma_\beta:=\{x_\gamma \colon \gamma\leq \beta \}\in \N$ for every $\beta<\non(\N)$, $\sigma_\xi\subseteq \sigma_\beta$ if $\xi\leq \beta<\non(\N)$, $(\chi_{\sigma_\beta})_{\beta<\non(\N)}$ converges pointwise everywhere to $\chi_I$ where $I:=\left[0,\frac{1}{2} \right]$, and $\sigma_\beta \cap \mathfrak C=\emptyset$ for every $\beta<\non(\N)$, we have the desired result by Lemma~\ref{lem:7}.
\end{proof}

\begin{theorem}\label{thm:4_1}
	If $\add(\N)=\cov(\N)$, then $\mathcal{NM}_{\add(\N)}$, $\mathcal{NF}_{\add(\N)}$ and $\mathcal{ND}_{\add(\N)}$ in $\left(\mathbb R^{[0,1]} \right)^{\add(\N)}$ are positively $2^{\mathfrak c}$-coneable.
%	\ck{The argument for this seems very similar to one I suggested in previous remark. Perhaps, the theorems have 
%		common strengthening. 
%		\\}
%	\ch{\rv{As before, I was not able to do it.}\\}
\end{theorem}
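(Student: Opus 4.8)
The plan is to reduce everything to Lemma~\ref{lem:7}, exactly as in Theorem~\ref{thm:4}, but to replace the point-by-point enumeration of $\left[0,\frac12\right]$ (which produced null initial segments because sets of size $<\non(\N)$ are null) by an increasing chain of null sets obtained from a \emph{covering} of $\left[0,\frac12\right]$, whose partial unions are null because unions of fewer than $\add(\N)$ null sets are null. First I would fix the data: take $X\subset\N$ witnessing $\cov(\N)$ with $\bigcup X=\left[0,\frac12\right]$ (as in Theorem~\ref{thm:1}), and fix a measure-zero Cantor set $\mathfrak C\subset\left(\frac12,1\right]$ together with the sets $\{V_\alpha^{\mathfrak C}\colon\alpha<2^{\co}\}$ from \eqref{equ:1}. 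The assumption $\add(\N)=\cov(\N)$ enters precisely here: it lets me enumerate $X=\{M_\beta\colon\beta<\add(\N)\}$ by the ordinal $\add(\N)$, so that the cover itself has length equal to the index set of the sequences under study.

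Next I would set $\sigma_\beta:=\bigcup_{\gamma\le\beta}M_\gamma$ for each $\beta<\add(\N)$ and define the $\add(\N)$-sequences $f_\beta^\alpha:=\chi_{V_\alpha^{\mathfrak C}\cup\sigma_\beta}$ for $\alpha<2^{\co}$. The crucial verification is that each $\sigma_\beta$ is null: it is a union of $\card(\beta+1)$ null sets, and $\card(\beta+1)<\add(\N)$ since $\add(\N)$ is an infinite cardinal and $\beta<\add(\N)$, so nullity follows at once from the definition of $\add(\N)$. The remaining hypotheses of Lemma~\ref{lem:7} are then routine: the chain $(\sigma_\beta)_{\beta<\add(\N)}$ is $\subseteq$-increasing; each $\sigma_\beta\subseteq\left[0,\frac12\right]$ is disjoint from $\mathfrak C$; and $(\chi_{\sigma_\beta})_{\beta<\add(\N)}$ converges pointwise everywhere to $\chi_I$ with $I:=\left[0,\frac12\right]$, because every $x\in\left[0,\frac12\right]$ lies in some $M_{\gamma_0}$ and hence in $\sigma_\beta$ for all $\beta\ge\gamma_0$, while points outside $\left[0,\frac12\right]$ never enter any $\sigma_\beta$. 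Here $I$ is a nondegenerate interval disjoint from $\mathfrak C$, and $\mathfrak C\subset[0,1]$, as required.

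With these hypotheses in place, Lemma~\ref{lem:7} delivers both conclusions simultaneously: the family $\mathcal B=\{(\chi_{V_\alpha^{\mathfrak C}\cup\sigma_\beta})_{\beta<\add(\N)}\colon\alpha<2^{\co}\}$ consists of $2^{\co}$-many linearly independent nets, and its positive cone is contained in $\mathcal{NM}_{\add(\N)}$, $\mathcal{NF}_{\add(\N)}$ and $\mathcal{ND}_{\add(\N)}$; this is exactly positive $2^{\co}$-coneability for all three families. The single genuine obstacle is the nullity of the partial unions $\sigma_\beta$, and this is precisely what the standing hypothesis buys: without $\cov(\N)=\add(\N)$ one is not guaranteed a cover of $\left[0,\frac12\right]$ by $\add(\N)$-many null sets whose proper initial unions all remain null, so the construction of the required increasing, pointwise-converging chain of null sets could fail. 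Everything else is the same bookkeeping as in Theorem~\ref{thm:4}.
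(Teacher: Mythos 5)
Your proposal is correct and follows essentially the same route as the paper's own proof: both take a family of null sets witnessing $\cov(\N)$ covering $\left[0,\frac{1}{2}\right]$, use the hypothesis $\add(\N)=\cov(\N)$ to enumerate it in order type $\add(\N)$ so that the increasing partial unions $\sigma_\beta=\bigcup_{\gamma\le\beta}M_\gamma$ remain null, and then feed the resulting data into Lemma~\ref{lem:7}. Your verification that $\card(\beta+1)<\add(\N)$ guarantees nullity of each $\sigma_\beta$, and that the chain converges pointwise to $\chi_{[0,1/2]}$, matches the paper exactly.
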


\begin{proof}
	Let $\{N_\beta \colon \beta < \cov(\N) \}$ be a collection of null sets witnessing $\cov(\N)$ contained in $\left[0,\frac{1}{2}\right]$ whose union is $\left[0,\frac{1}{2}\right]$,
%	\ck{You cannot ensure that the union is  $\left[0,\frac{1}{2}\right]$
%	unless you make stronger assumption that $\add(\N)=\mathfrak c$.\\
%	I do not see how this construction can give your theorem under weaker assumptions.
%	(Proof of Thm 3.11 from previous version had also same problem, that I missed earlier.\\}
	and $\mathfrak C$ a Cantor set of measure $0$ contained in  $\left(\frac{1}{2},1 \right]$.
	For each $\alpha<2^{\mathfrak c} $, let us define the $\add(\N)$-sequence of functions $(f_\beta^\alpha)_{\beta < \add(\N)}$ as follows: for every $\beta < \add(\N)$,
	$$
	f_\beta^\alpha:=\chi_{V_\alpha^{\mathfrak C} \cup \left(\bigcup_{\gamma\leq \beta} N_\beta \right)},
	$$
	where the sets $V_\alpha^{\mathfrak C}$ are defined in \eqref{equ:1} at the beginning of Section~\ref{sec:2}.
	As $\add(\N)=\cov(\N)$ and $\beta<\add(\N)$ it is clear that $\bigcup_{\gamma\leq \beta} N_\beta$ is a null set.
	
	Since $\sigma_\beta:=\bigcup_{\gamma\leq \beta} N_\beta\in \N$ for every $\beta<\add(\N)$, $\sigma_\xi\subseteq \sigma_\beta$ if $\xi\leq \beta<\add(\N)$, $(\chi_{\sigma_\beta})_{\beta<\add(\N)}$ converges pointwise everywhere to $\chi_I$ where $I:=\left[0,\frac{1}{2} \right]$, and $\sigma_\beta \cap \mathfrak C=\emptyset$ for every $\beta<\add(\N)$, we have the desired result by Lemma~\ref{lem:7}.
\end{proof}

\begin{theorem}\label{thm:8_1}
	If $\non(\N)=\mathfrak c$, the set $\mathcal{ANM}_{\mathfrak c} \setminus \mathcal{ND}_{\mathfrak c}$ in $\left(\mathbb R^{[0,1]} \right)^{\mathfrak c}$ is $2^{\mathfrak c}$-coneable.
\end{theorem}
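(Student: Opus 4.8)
The plan is to regard a $\mathfrak c$-sequence as a net indexed by the linearly ordered directed set $\mathfrak c$ and to invoke Lemma~\ref{lem:8}, exactly as Theorem~\ref{thm:7} applied that lemma to the directed set of finite subsets of a covering family. First I would fix an enumeration $\{x_\beta\colon \beta<\mathfrak c\}$, without repetition, of the interval $\left(0,\frac12\right]$, together with a Cantor set $\mathfrak C$ of measure $0$ contained in $\left(\frac12,1\right]$. For each $\alpha<2^{\mathfrak c}$ I would then define the $\mathfrak c$-sequence $(f_\beta^\alpha)_{\beta<\mathfrak c}$ by setting, for every $\beta<\mathfrak c$,
	$$
	f_\beta^\alpha(x):=\begin{cases}
	\frac{1}{x}\cdot \chi_{V_\alpha^{\mathfrak C}\cup \{x_\gamma\colon \gamma\le \beta\}}(x) & \text{if } x\neq 0,\\
	0 & \text{if } x=0,
	\end{cases}
	$$
where the sets $V_\alpha^{\mathfrak C}$ are those from \eqref{equ:1}. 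This is the verbatim analogue of the construction in Theorem~\ref{thm:7}, with the finite unions $\bigcup A$ replaced by the initial segments of the enumeration.

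The role of the hypothesis $\non(\N)=\mathfrak c$ is precisely to guarantee that the initial segments $\sigma_\beta:=\{x_\gamma\colon \gamma\le \beta\}$ are Lebesgue null: since $\card(\sigma_\beta)=\card(\beta+1)<\mathfrak c=\non(\N)$, no such set can fail to be null. With $\mathcal A=\mathfrak c$ and this net $(\sigma_\beta)_{\beta<\mathfrak c}$ I would then verify the remaining hypotheses of Lemma~\ref{lem:8}: the sets $\sigma_\beta$ are increasing in $\beta$ and are contained in $[0,1]$; they are disjoint from $\mathfrak C$, because $\sigma_\beta\subset\left(0,\frac12\right]$ while $\mathfrak C\subset\left(\frac12,1\right]$; and $(\chi_{\sigma_\beta})_{\beta<\mathfrak c}$ converges pointwise everywhere to $\chi_I$ with $I:=\left(0,\frac12\right]$, since every $x\in I$ equals some $x_{\beta_0}$ and hence lies in $\sigma_\beta$ for all $\beta\ge \beta_0$, whereas every $x\notin I$ lies in no $\sigma_\beta$. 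Finally $I$ is a nondegenerate interval with left endpoint $0$, contained in $(0,1]$ and disjoint from $\mathfrak C$, exactly as the lemma requires.

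With these verifications in place, Lemma~\ref{lem:8} yields at once that the family $\mathcal B=\left\{(f_\beta^\alpha)_{\beta<\mathfrak c}\colon \alpha<2^{\mathfrak c}\right\}$ consists of $2^{\mathfrak c}$ linearly independent $\mathfrak c$-sequences and that both the positive and the negative cone generated by $\mathcal B$ are contained in $\mathcal{ANM}_{\mathfrak c}\setminus \mathcal{ND}_{\mathfrak c}$; hence this set is $2^{\mathfrak c}$-coneable. I expect no genuine obstacle beyond the bookkeeping, since the substantive analytic content (measurability of each $f_\beta^\alpha$, convergence to a nonintegrable limit, and the fact that the limits stay bounded below in measure so that $\mathcal{ND}_{\mathfrak c}$ is avoided while $\mathcal{ANM}_{\mathfrak c}$ is entered) is already packaged inside Lemma~\ref{lem:8}. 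The one point that truly needs care is the present step, arranging the enumeration so that \emph{every} initial segment is null, and this is exactly where $\non(\N)=\mathfrak c$ is indispensable: an enumeration of a set of size $\non(\N)<\mathfrak c$ could produce a non-null initial segment and break the monotone, null-valued structure that the lemma demands.
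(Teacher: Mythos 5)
Your proposal is correct and coincides with the paper's own proof: the same enumeration $\{x_\beta\colon\beta<\mathfrak c\}$ of $\left(0,\frac12\right]$, the same functions $f_\beta^\alpha$ built from $V_\alpha^{\mathfrak C}\cup\{x_\gamma\colon\gamma\le\beta\}$, the same use of $\non(\N)=\mathfrak c$ to make the initial segments null, and the same appeal to Lemma~\ref{lem:8}. No further comment is needed.
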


\begin{proof}
	Let $\{x_\beta \colon \beta<\mathfrak c \}$ be an enumeration, without repetition, of 
	$\left(0,\frac{1}{2} \right]$,  %\ck{Why not $\left[0,\frac{1}{2} \right]$?\\}
	and $\mathfrak C$ be a Cantor set of measure $0$ contained in  $\left(\frac{1}{2},1 \right]$.
	For every $\alpha<2^{\mathfrak c} $, define the $\mathfrak c$-sequence of functions $(f_\beta^\alpha)_{\beta < \mathfrak c}$ in the following way: for every $x\in [0,1]$ and $\beta<\mathfrak c$
	$$
	f_\beta^\alpha(x):=\begin{cases}
	\frac{1}{x} \cdot \chi_{V_\alpha^{\mathfrak C} \cup \{x_\gamma \colon \gamma\leq \beta \}}(x) & \text{if } x\neq 0, \\
	0 & \text{if } x=0,
	\end{cases}
	$$
	where the sets $V_\alpha^{\mathfrak C}$ are defined in \eqref{equ:1} at the beginning of Section~\ref{sec:2}.
	It is clear that $\{x_\gamma \colon \gamma\leq \beta \}\in \N$ from the fact that $\card(\{x_\gamma \colon \gamma\leq \beta \})=\card(\beta+1)<\mathfrak c=\non(\N)$. 
	
	By taking $\sigma_\beta:=\{x_\gamma \colon \gamma\leq \beta \}\in \N$ and $I:=\left(0,\frac{1}{2} \right]$ we have that $\sigma_\beta\subset \left(0,\frac{1}{2} \right]$ for every $\beta<\mathfrak c$ and
	$(\chi_{\sigma_\beta})_{\beta<\mathfrak c}$
	converges pointwise everywhere to $\chi_I$.
	Also
	$\sigma_\xi \subseteq \sigma_\beta$ for every $\xi \leq \beta<\mathfrak c$, and $\sigma_\beta\cap \mathfrak C=\emptyset$ for every $\beta<\mathfrak c$ where $\mathfrak C\subset \left(\frac{1}{2},1 \right]$.
	Thus, by Lemma~\ref{lem:8}, the result follows.
\end{proof}

\begin{theorem}\label{thm:8_2}
	Assuming that $\add(\N)=\cov(\N)$, the set $\mathcal{ANM}_{\add(\N)} \setminus \mathcal{ND}_{\add(\N)}$ in $\left(\mathbb R^{[0,1]} \right)^{\add(\N)}$ is $2^{\mathfrak c}$-coneable.
%	\ck{Remove this theorem.
%	As in case of Thm 3.4, the proof works only under assumption that $\add(\N)=\mathfrak c$.
%	But such result follows from thm 3.5.\\}
	\end{theorem}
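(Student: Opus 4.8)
The plan is to combine the constructions behind Theorems~\ref{thm:4_1} and~\ref{thm:8_1}: I would build the same $\add(\N)$-sequence of ``partial-union'' supports used in Theorem~\ref{thm:4_1}, weight the characteristic functions by $\frac1x$ exactly as in Theorem~\ref{thm:8_1}, and then invoke Lemma~\ref{lem:8}.

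Concretely, first I would fix a family $\{N_\beta : \beta<\cov(\N)\}$ of null sets witnessing $\cov(\N)$ with each $N_\beta\subset\left(0,\frac12\right]$ and $\bigcup_{\beta<\cov(\N)}N_\beta=\left(0,\frac12\right]$, together with a measure-zero Cantor set $\mathfrak C\subset\left(\frac12,1\right]$. For each $\alpha<2^{\mathfrak c}$ I would set $\sigma_\beta:=\bigcup_{\gamma\le\beta}N_\gamma$ and define the $\add(\N)$-sequence $(f_\beta^\alpha)_{\beta<\add(\N)}$ by $f_\beta^\alpha(x)=\frac1x\,\chi_{V_\alpha^{\mathfrak C}\cup\sigma_\beta}(x)$ for $x\ne0$ and $f_\beta^\alpha(0)=0$, where $V_\alpha^{\mathfrak C}$ is as in \eqref{equ:1}. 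This is literally the family $\mathcal B$ of Lemma~\ref{lem:8} with index set $\add(\N)$ and these $\sigma_\beta$.

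The crux is verifying the hypotheses of Lemma~\ref{lem:8}, and this is the only point where the assumption $\add(\N)=\cov(\N)$ is used. Since $\beta<\add(\N)$, the set $\sigma_\beta$ is a union of strictly fewer than $\add(\N)$ null sets, so $\sigma_\beta\in\N$; without the equality $\add(\N)=\cov(\N)$ the partial unions of a $\cov(\N)$-witnessing family need not stay null, so this equality is exactly what legitimizes the $\add(\N)$-indexing. The remaining hypotheses are routine: $\sigma_\xi\subseteq\sigma_\beta$ for $\xi\le\beta$ is immediate; $\sigma_\beta\cap\mathfrak C=\emptyset$ holds because $\sigma_\beta\subset\left(0,\frac12\right]$ while $\mathfrak C\subset\left(\frac12,1\right]$; and $(\chi_{\sigma_\beta})_{\beta<\add(\N)}$ converges pointwise everywhere to $\chi_I$ with $I=\left(0,\frac12\right]$, since every $x\in\left(0,\frac12\right]$ lies in some $N_{\beta_0}$ and hence in $\sigma_\beta$ for all $\beta\ge\beta_0$, whereas points outside $\left(0,\frac12\right]$ never enter any $\sigma_\beta$.

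With these checks in place, Lemma~\ref{lem:8} simultaneously yields linear independence of $\{(f_\beta^\alpha)_{\beta<\add(\N)}:\alpha<2^{\mathfrak c}\}$ and the containment of both the positive and negative cones they generate in $\mathcal{ANM}_{\add(\N)}\setminus\mathcal{ND}_{\add(\N)}$, which is precisely $2^{\mathfrak c}$-coneability. I do not expect a genuine obstacle: the argument is a direct transfer of the net-level Lemma~\ref{lem:8} to the regular-cardinal $\add(\N)$-sequence setting, so the entire mathematical content reduces to the nullity of the partial unions $\sigma_\beta$ guaranteed by $\add(\N)=\cov(\N)$.
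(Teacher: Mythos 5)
Your proposal is correct and coincides with the paper's own proof: the paper takes the same $\cov(\N)$-witnessing family of null sets in $\left(0,\frac{1}{2}\right]$, forms the same partial unions $\sigma_\beta=\bigcup_{\gamma\leq\beta}N_\gamma$ (null precisely because $\add(\N)=\cov(\N)$), weights by $\frac{1}{x}$, and invokes Lemma~\ref{lem:8}. No differences worth noting.
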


\begin{proof}
	Let $\{N_\beta \colon \beta < \cov(\N) \}$ be a collection of null sets witnessing $\cov(\N)$ contained in $\left(0,\frac{1}{2}\right]$ whose union is $\left(0,\frac{1}{2}\right]$, and $\mathfrak C$ a Cantor set of measure $0$ contained in  $\left(\frac{1}{2},1 \right]$.
	For every $\alpha<2^{\mathfrak c} $, we will define the $\add(\N)$-sequence of functions $(f_\beta^\alpha)_{\beta < \mathfrak c}$ in the following way: for every $x\in [0,1]$ and $\beta<\add(\N)$,,
	$$
	f_\beta^\alpha(x):=\begin{cases}
	\frac{1}{x} \cdot \chi_{V_\alpha^{\mathfrak C} \cup \left(\bigcup_{\gamma\leq \beta} N_\beta \right)}(x) & \text{if } x\neq 0, \\
	0 & \text{if } x=0,
	\end{cases}
	$$
	where the sets $V_\alpha^{\mathfrak C}$ are defined in \eqref{equ:1} at the beginning of Section~\ref{sec:2}.
	Since $\add(\N)=\cov(\N)$ and $\beta<\add(\N)$ note that $\bigcup_{\gamma\leq \beta} N_\beta\in N$. 
	
	Since $\sigma_\beta:=\bigcup_{\gamma\leq \beta} N_\beta\in \N$ satisfies that $\sigma_\beta\subset \left(0,\frac{1}{2} \right]$ for each $\beta<\mathfrak c$, $(\chi_\beta)_{\beta<\mathfrak c}$ converges pointwise everywhere to $\chi_I$ with $I:=\left(0,\frac{1}{2} \right]$, $\sigma_\xi\subseteq \sigma_\beta$ for every $\xi \leq \beta<\mathfrak c$, and $\sigma_\beta\cap \mathfrak C=\emptyset$ for any $\beta<\mathfrak c$ where $\mathfrak C\subset \left[\frac{1}{2},1 \right]$, we have the desired result by Lemma~\ref{lem:8}.
\end{proof}

As in Section~2, by comparing Theorem~\ref{thm:7} with Theorem~\ref{thm:8}, and Theorem~\ref{thm:1} with Thoerem~\ref{thm:3_1}, we can improve the algebraic structure of: (1) $\mathcal{ND}_{\mathfrak c}$ and $\mathcal{ANM}_{\mathfrak c}\setminus \mathcal{ND}_{\mathfrak c}$ from Theorems~\ref{thm:4} and~\ref{thm:8_1}, respectively (under $\non(\N)=\mathfrak c$), and (2) $\mathcal{ND}_{\add(\N)}$ and $\mathcal{ANM}_{\add(\N)}\setminus \mathcal{ND}_{\add(\N)}$ from Theorems~\ref{thm:4_1} and~\ref{thm:8_2}, respectively (under $\add(\N)=\cov(\N)$).
This is shown in Theorems~\ref{thm:9_1}, \ref{thm:9_2}, \ref{thm:6} and~\ref{thm:6_2} below.

\begin{theorem}\label{thm:9_1}
	If $\non(\N)=\mathfrak c$, then we have that $\mathcal{ANM}_{\mathfrak c}\setminus \mathcal{ND}_{\mathfrak c}$ in $\left(\mathbb R^{[0,1]}\right)^{\mathfrak c}$ is strongly $\mathfrak c$-algebrable.
\end{theorem}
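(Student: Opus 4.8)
The plan is to derive the result as a direct instance of Lemma~\ref{lem:9}, taking the directed set to be $\mathfrak c$ itself equipped with its usual well-ordering, which is linear and hence directed. First I would fix a Hamel basis $\mathcal H$ of $\R$ over $\mathbb Q$, recalling that $\card(\mathcal H)=\mathfrak c$, and choose the two intervals required by the lemma, say $I:=\left(0,\tfrac12\right]$ and $J:=\left[\tfrac23,1\right]$, so that $I=(0,a]$ with $a=\tfrac12<1$ and $J\subset[a,1]$ is a nondegenerate compact interval disjoint from $I$. The generators will be the $\mathfrak c$-sequences
$$
f_\beta^h(x):=\begin{cases} \dfrac{e^{hx}}{x}\cdot \chi_{I\cup\sigma_\beta}(x) & \text{if } x\neq 0,\\ 0 & \text{if } x=0,\end{cases}
$$
for $h\in\mathcal H$ and $\beta<\mathfrak c$, exactly mirroring the net version in Theorem~\ref{thm:8}, where $(\sigma_\beta)_{\beta<\mathfrak c}$ is the increasing family of null sets constructed below.

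The heart of the argument, and the only place the hypothesis $\non(\N)=\mathfrak c$ enters, is the construction of $(\sigma_\beta)_{\beta<\mathfrak c}$. I would enumerate $J$ without repetition as $\{x_\gamma\colon \gamma<\mathfrak c\}$ (possible since $\card(J)=\mathfrak c$) and set $\sigma_\beta:=\{x_\gamma\colon \gamma\leq\beta\}$. Then $\card(\sigma_\beta)=\card(\beta+1)<\mathfrak c=\non(\N)$, so by the very definition of $\non(\N)$ each $\sigma_\beta$ is a null set and therefore Lebesgue measurable; this is precisely the step that collapses without the assumption $\non(\N)=\mathfrak c$. Moreover $\sigma_\beta\subseteq J$ forces $\sigma_\beta\cap I=\emptyset$, the relation $\xi\leq\beta$ yields $\sigma_\xi\subseteq\sigma_\beta$, and $(\chi_{\sigma_\beta})_{\beta<\mathfrak c}$ converges pointwise everywhere to $\chi_J$: each $x\in J$ equals some $x_{\gamma_0}$ and hence lies in $\sigma_\beta$ for all $\beta\geq\gamma_0$, while no point outside $J$ ever enters any $\sigma_\beta$.

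With these $\sigma_\beta$ all the hypotheses of Lemma~\ref{lem:9} are satisfied, so the lemma delivers at once that the nets $\{(f_\beta^h)_{\beta<\mathfrak c}\colon h\in\mathcal H\}$ are algebraically independent and that the algebra they generate, after restriction to $[0,1]$, is contained in $(\mathcal{ANM}_{\mathfrak c}\setminus\mathcal{ND}_{\mathfrak c})\cup\{0\}$. Since $\card(\mathcal H)=\mathfrak c$, this exhibits a $\mathfrak c$-generated free algebra $B$ with $B\setminus\{0\}\subset \mathcal{ANM}_{\mathfrak c}\setminus\mathcal{ND}_{\mathfrak c}$, which is exactly strong $\mathfrak c$-algebrability. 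The only genuine obstacle is thus securing the measurability of each $\sigma_\beta$ together with the pointwise-everywhere convergence $\chi_{\sigma_\beta}\to\chi_J$, both of which rest solely on the standing assumption $\non(\N)=\mathfrak c$; everything else is the bookkeeping already encapsulated in Lemma~\ref{lem:9}.
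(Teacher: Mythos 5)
Your proposal is correct and follows essentially the same route as the paper's own proof: the same intervals $I=\left(0,\tfrac12\right]$ and $J=\left[\tfrac23,1\right]$, the same enumeration of $J$ in order type $\mathfrak c$ producing the increasing null sets $\sigma_\beta=\{x_\gamma\colon\gamma\leq\beta\}$ via $\non(\N)=\mathfrak c$, and the same invocation of Lemma~\ref{lem:9}. Nothing further is needed.
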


\begin{proof}
	Let $\mathcal H$ be a Hamel basis of $\R$ over $\mathbb Q$ and let $\{x_\alpha \colon \alpha<\mathfrak c \}$ be an enumeration, without repetition, of $\left[\frac{2}{3},1 \right]$.
	For every $h\in \mathcal H$, define the $\mathfrak c$-sequences $(f_\alpha^h)_{\alpha<\mathfrak c}$ as follows: for each $\alpha<\mathfrak c$,
	$$
	f_\alpha^h(x):=\begin{cases}
	f_h(x) \cdot \chi_{\left(0,\frac{1}{2}\right] \cup \{x_\beta \colon \beta\leq \alpha \}} & \text{if } x\neq 0,\\
	0 & \text{if } x=0,
	\end{cases}
	$$
	where $f_h(x):=\frac{e^{hx}}{x}$ for every $x\in [0,1]$.
	Note that $\{x_\beta \colon \beta\leq \alpha \}\in \N$ since $\alpha<\mathfrak c$ and $\non(\N)=\mathfrak c$.
	
	By taking $\sigma_\alpha:=\{x_\beta \colon \beta\leq \alpha \}\in \N$ for each $\alpha<\mathfrak c$, we have that $\sigma_\alpha \cap I=\emptyset$ for each $\alpha<\mathfrak c$ with $I:=\left(0,\frac{1}{2}\right]$, $\sigma_\alpha\subseteq \sigma_\beta$ for every $\alpha\leq \beta<\mathfrak c$, and $(\chi_{\sigma_\alpha})_{\alpha<\mathfrak c}$ converges pointwise everywhere to $\chi_J$ with $J:=\left[\frac{2}{3},1 \right]$.
	Thus, by Lemma~\ref{lem:9}, we have the desired result.
\end{proof}

\begin{theorem}\label{thm:9_2}
	If $\add(\N)=\cov(\N)$, we have that $\mathcal{ANM}_{\add(\N)}\setminus \mathcal{ND}_{\add(\N)}$ in $\left(\mathbb R^{[0,1]}\right)^{\add(\N)}$ is strongly $\mathfrak c$-algebrable.
\end{theorem}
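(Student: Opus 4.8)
The plan is to reduce the statement to Lemma~\ref{lem:9}, following the template of Theorem~\ref{thm:9_1} but replacing the pointwise enumeration of the target interval by a covering family witnessing $\cov(\N)$, exactly as the passage from Theorem~\ref{thm:8_1} to Theorem~\ref{thm:8_2} does. First I would fix a Hamel basis $\mathcal H$ of $\R$ over $\bQ$, so that $\card(\mathcal H)=\co$, and, invoking the hypothesis, a collection $\{N_\beta\colon\beta<\cov(\N)\}$ of null sets contained in $J:=\left[\frac{2}{3},1\right]$ whose union is all of $J$. For each $h\in\mathcal H$ I would set $f_h(x):=\frac{e^{hx}}{x}$ on $[0,1]$ and define the $\add(\N)$-sequence $(f_\beta^h)_{\beta<\add(\N)}$ by
$$
f_\beta^h(x):=\begin{cases} f_h(x)\cdot\chi_{\left(0,\frac{1}{2}\right]\cup\left(\bigcup_{\gamma\leq\beta}N_\gamma\right)}(x) & \text{if } x\neq 0,\\ 0 & \text{if } x=0.\end{cases}
$$
With $\sigma_\beta:=\bigcup_{\gamma\leq\beta}N_\gamma$, $I:=\left(0,\frac{1}{2}\right]$, the whole verification then amounts to checking that the data $(\sigma_\beta)_{\beta<\add(\N)}$, $I$ and $J$ satisfy the hypotheses of both parts of Lemma~\ref{lem:9}.

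Concretely, I would verify the following in order. Since $\beta<\add(\N)=\cov(\N)$, the set $\sigma_\beta$ is a union of fewer than $\add(\N)$ null sets, hence a null set, in particular Lebesgue measurable and contained in $[0,1]$; this is the single point at which the hypothesis $\add(\N)=\cov(\N)$ is genuinely used. The chain condition $\sigma_\xi\subseteq\sigma_\beta$ for $\xi\leq\beta$ is immediate from the definition, and $\sigma_\beta\subset J\subset\left[\frac{1}{2},1\right]$ gives $\sigma_\beta\cap I=\emptyset$, as required for the algebraic independence in the first part of Lemma~\ref{lem:9} (note $I=(0,a]$ with $a=\frac{1}{2}<1$ and $J\subset[a,1]$ nondegenerate compact). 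It remains to check that $(\chi_{\sigma_\beta})_{\beta<\add(\N)}$ converges pointwise everywhere to $\chi_J$: for $x\notin J$ each $N_\gamma\subset J$ forces $\chi_{\sigma_\beta}(x)=0=\chi_J(x)$ for all $\beta$, while for $x\in J$ the covering property yields $x\in N_{\gamma_0}$ for some $\gamma_0<\cov(\N)=\add(\N)$, so $\chi_{\sigma_\beta}(x)=1=\chi_J(x)$ for every $\beta\geq\gamma_0$. Having met all hypotheses, the first part of Lemma~\ref{lem:9} gives that the nets $(f_\beta^h)_{\beta<\add(\N)}$, $h\in\mathcal H$, are algebraically independent, and the additional part shows that the algebra they generate (restricted to $[0,1]$) lies in $\left(\mathcal{ANM}_{\add(\N)}\setminus\mathcal{ND}_{\add(\N)}\right)\cup\{0\}$; since $\card(\mathcal H)=\co$ this is a free algebra on $\co$ generators, giving strong $\co$-algebrability.

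The argument is essentially formulaic once Lemma~\ref{lem:9} is in place, so there is no deep obstacle; the only genuine subtlety—and the place where the proof could silently break—is the null-set verification for $\sigma_\beta$ together with the convergence to $\chi_J$, both of which hinge on the bound $\beta<\add(\N)=\cov(\N)$. I would therefore present these two checks explicitly (rather than asserting them), mirroring the corresponding remarks in the proofs of Theorems~\ref{thm:4_1} and~\ref{thm:8_2}, and then simply cite Lemma~\ref{lem:9} to conclude.
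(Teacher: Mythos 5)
Your proposal is correct and follows essentially the same route as the paper: the same choice of a covering family $\{N_\beta\colon\beta<\cov(\N)\}$ of null subsets of $\left[\frac{2}{3},1\right]$, the same sequences $f_\beta^h=f_h\cdot\chi_{\left(0,\frac{1}{2}\right]\cup\sigma_\beta}$ with $\sigma_\beta=\bigcup_{\gamma\leq\beta}N_\gamma$, and the same reduction to Lemma~\ref{lem:9}. Your explicit verification of the null-set bound and of the pointwise convergence to $\chi_J$ is slightly more detailed than the paper's, but the argument is the same.
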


\begin{proof}
	Let $\mathcal H$ be a Hamel basis of $\R$ over $\mathbb Q$ 
	and take $\{N_\alpha \colon \alpha < \cov(\N) \}$ a collection of null sets witnessing $\cov(\N)$ contained in $\left[\frac{2}{3},1\right]$ such that $\bigcup_{\alpha<\cov(\N)} N_\alpha=\left[\frac{2}{3},1\right]$.
%	\kc{???}\ck{Same problem as earlier. 
%	The proof works only under assumption that $\add(\N)=\mathfrak c$.\\}
	For each $h\in \mathcal H$, let us define the $\add(\N)$-sequences $(f_\alpha^h)_{\alpha<\add(\N)}$ as follows: for each $\alpha<\ad(\N)$,
	$$
	f_\alpha^h(x):=\begin{cases}
	f_h(x) \cdot \chi_{\left(0,\frac{1}{2}\right] \cup \left(\bigcup_{\beta\leq \alpha} N_\beta\right)} & \text{if } x\neq 0,\\
	0 & \text{if } x=0,
	\end{cases}
	$$
	where $f_h(x):=\frac{e^{hx}}{x}$ for every $x\in [0,1]$.
	If $\alpha<\add(\N)=\cov(\N)$, observe that $\bigcup_{\beta\leq \alpha} N_\beta\in \N$.
	
	By taking $\sigma_\alpha:=\bigcup_{\beta\leq \alpha} N_\beta\in \N$ for each $\alpha<\mathfrak c$, we have that $\sigma_\alpha \cap I=\emptyset$ for each $\alpha<\mathfrak c$ with $I:=\left(0,\frac{1}{2}\right]$, $\sigma_\alpha\subseteq \sigma_\beta$ for every $\alpha\leq \beta<\add(\N)$, and $(\chi_{\sigma_\alpha})_{\alpha<\mathfrak c}$ converges pointwise everywhere to $\chi_J$ with $J:=\left[\frac{2}{3},1 \right]$.
	Therefore, the result follows from Lemma~\ref{lem:9}.
	\end{proof}

\begin{theorem}\label{thm:6}
	If $\non(\mathcal N)=\mathfrak c$, then $\mathcal{ND}_{\mathfrak c}$ in $\left(\mathbb R^{[0,1]}\right)^{\mathfrak c}$ is strongly $\mathfrak c$-algebrable.
\end{theorem}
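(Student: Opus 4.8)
The plan is to reproduce the proof of Theorem~\ref{thm:3_1} almost verbatim, replacing the net indexed by finite subsets of a covering family with a genuine $\mathfrak c$-sequence, and then to invoke Lemma~\ref{lem:5}(ii) with the directed set taken to be the ordinal $\mathfrak c$ itself, which is linearly (hence directedly) ordered since any two $\alpha,\beta<\mathfrak c$ have the upper bound $\max(\alpha,\beta)<\mathfrak c$. The hypothesis $\non(\N)=\mathfrak c$ is precisely what allows one to manufacture an increasing $\mathfrak c$-sequence of null sets converging pointwise to the characteristic function of an interval; this is the only place the assumption enters.

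Concretely, I would first fix a Hamel basis $\mathcal H$ of $\R$ over $\mathbb Q$, recall that $\card(\mathcal H)=\mathfrak c$, and set $f_h(x):=e^{hx}$ for each $h\in\mathcal H$, exactly as in the statement of Lemma~\ref{lem:5}. I would then choose the two disjoint intervals $J:=\left[0,\frac12\right]$ and $I:=\left(\frac12,1\right]$, so that $J\setminus I=J$ has nonempty interior and $J,I\subseteq[0,1]$. Using $\non(\N)=\mathfrak c$, enumerate $J$ without repetition as $\{x_\beta\colon\beta<\mathfrak c\}$ and set
$$
\sigma_\beta:=\{x_\gamma\colon\gamma\leq\beta\}
$$
for every $\beta<\mathfrak c$.

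The key observation is that each $\sigma_\beta$ is a null set: since $\card(\sigma_\beta)=\card(\beta+1)<\mathfrak c=\non(\N)$, a set of that cardinality cannot be non-null, so $\sigma_\beta\in\N$. By construction $\sigma_\xi\subseteq\sigma_\beta$ whenever $\xi\leq\beta<\mathfrak c$, each $\sigma_\beta\subseteq J\setminus I$, and $(\chi_{\sigma_\beta})_{\beta<\mathfrak c}$ converges pointwise everywhere to $\chi_J$: for $x\in J$ we have $x=x_{\beta_0}$ for some $\beta_0<\mathfrak c$, whence $x\in\sigma_\beta$ for all $\beta\geq\beta_0$, while for $x\notin J$ we have $x\notin\sigma_\beta$ for every $\beta$. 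I would then define, for each $h\in\mathcal H$, the $\mathfrak c$-sequence $(f_\beta^h)_{\beta<\mathfrak c}\in\left(\R^{[0,1]}\right)^{\mathfrak c}$ by $f_\beta^h:=\bigl(f_h\cdot\chi_{I\cup\sigma_\beta}\bigr)\restriction[0,1]$.

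With these data in place Lemma~\ref{lem:5} does all the work: its first part gives that the $\mathfrak c$-many sequences $\{(f_\beta^h)_{\beta<\mathfrak c}\colon h\in\mathcal H\}$ are algebraically independent, and part (ii)—whose hypotheses ($J,I\subseteq[0,1]$, each $\sigma_\beta\in\N$, and $\chi_{\sigma_\beta}\to\chi_J$ pointwise everywhere) have just been verified—gives that the algebra they generate is contained in $\mathcal{ND}_{\mathfrak c}\cup\{0\}$. This algebra is therefore a $\mathfrak c$-generated free algebra witnessing that $\mathcal{ND}_{\mathfrak c}$ is strongly $\mathfrak c$-algebrable. I expect no genuine obstacle: the entire argument is a transcription of Theorem~\ref{thm:3_1} into the $\mathfrak c$-sequence setting, and the only substantive point is the cardinality computation $\card(\sigma_\beta)<\non(\N)$ forcing $\sigma_\beta$ to be null, which is exactly where $\non(\N)=\mathfrak c$ is used.
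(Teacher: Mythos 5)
Your proposal is correct and follows essentially the same route as the paper's own proof: the paper likewise enumerates $\left[0,\frac{1}{2}\right]$ as $\{x_\alpha\colon\alpha<\mathfrak c\}$, sets $\sigma_\alpha=\{x_\beta\colon\beta\leq\alpha\}$ (null since $\card(\alpha+1)<\mathfrak c=\non(\N)$), takes $J=\left[0,\frac{1}{2}\right]$, $I=\left(\frac{1}{2},1\right]$, defines $f_\alpha^h=f_h\cdot\chi_{\sigma_\alpha\cup I}$, and concludes via the first part and part (ii) of Lemma~\ref{lem:5}.
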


\begin{proof}
	Let $\mathcal H$ be a Hamel basis of $\mathbb R$ over $\mathbb Q$ and let $\{x_\alpha \colon \alpha<\mathfrak c \}$ be an enumeration, without repetition, of $\left[0,\frac{1}{2}\right]$.
	For every $h\in \mathcal H$, let us define a $\mathfrak c$-sequence of functions $(f_\alpha^h)_{\alpha<\mathfrak c}$ in the following way: for each $\alpha<\mathfrak c$,
	$$
	f_\alpha^h=f_h\cdot \chi_{\{x_\beta \colon \beta\leq \alpha \}\cup \left(\frac{1}{2},1\right]},
	$$
	where $f_h(x)=e^{hx}$ for every $x\in [0,1]$.
	As $\card(\{x_\beta \colon \beta\leq \alpha \})=\card(\alpha+1)<\mathfrak c$ and $\non(\mathcal N)=\mathfrak c$, we have that $\{x_\beta \colon \beta\leq \alpha \}$ is a null set.
	
	Observe that $\sigma_\alpha:=\{x_\beta \colon \beta\leq \alpha \}\in \N$ for every $\alpha<\mathfrak c$, $\sigma_\alpha\subset J\setminus I$ with $J=\left[0,\frac{1}{2}\right]$ and $I=\left(\frac{1}{2},1\right]$ for every $\alpha<\mathfrak c$, and $(\chi_{\sigma_\alpha})_{\alpha<\add(\N)}$ converges pointwise everywhere to $\chi_J$.
	Therefore, by the first part and the additional part (ii) of Lemma~\ref{lem:5}, we have the desired 
	result.
\end{proof}

\begin{theorem}\label{thm:6_2}
	If $\add(\N)=\cov(\N)$, we have that $\mathcal{ND}_{\add(\N)}$ in $\left(\mathbb R^{[0,1]}\right)^{\add(\N)}$ is strongly 
	$\mathfrak c$-algebrable.
\end{theorem}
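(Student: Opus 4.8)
The plan is to mirror the proof of Theorem~\ref{thm:6} almost verbatim, replacing the singleton-enumeration of $\left[0,\frac{1}{2}\right]$ (which relied on $\non(\N)=\mathfrak c$) by a family of null sets witnessing $\cov(\N)$ together with its partial unions, exactly as was done in passing from Theorem~\ref{thm:8_1} to Theorem~\ref{thm:8_2} and from Theorem~\ref{thm:9_1} to Theorem~\ref{thm:9_2}. Concretely, I would fix a Hamel basis $\mathcal H$ of $\R$ over $\mathbb Q$ and set $f_h(x):=e^{hx}$ for $x\in[0,1]$; then I would choose a collection $\{N_\beta\colon \beta<\cov(\N)\}$ of null sets witnessing $\cov(\N)$, all contained in $\left[0,\frac{1}{2}\right]$ and with $\bigcup_{\beta<\cov(\N)}N_\beta=\left[0,\frac{1}{2}\right]$. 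For each $h\in\mathcal H$ and each $\alpha<\add(\N)$ I would define the $\add(\N)$-sequence
$$
f_\alpha^h:=f_h\cdot\chi_{\left(\bigcup_{\beta\le\alpha}N_\beta\right)\cup\left(\frac{1}{2},1\right]}.
$$

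The key point, and the only place the hypothesis is used, is the verification that each approximating set $\sigma_\alpha:=\bigcup_{\beta\le\alpha}N_\beta$ is null: since $\alpha<\add(\N)=\cov(\N)$, the set $\sigma_\alpha$ is a union of fewer than $\add(\N)$ null sets and hence, by the very definition of $\add(\N)$, lies in $\mathcal N$. I would then check the structural hypotheses needed to invoke the additional part (ii) of Lemma~\ref{lem:5} with $J:=\left[0,\frac{1}{2}\right]$ and $I:=\left(\frac{1}{2},1\right]$: that $\sigma_\alpha\subseteq J\setminus I$ (immediate, since each $N_\beta\subseteq\left[0,\frac{1}{2}\right]$ and $J\setminus I=\left[0,\frac{1}{2}\right]$ has nonempty interior), that $\sigma_\alpha\subseteq\sigma_{\alpha'}$ whenever $\alpha\le\alpha'$ (monotonicity of partial unions), and that $(\chi_{\sigma_\alpha})_{\alpha<\add(\N)}$ converges pointwise everywhere to $\chi_J$. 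The last of these is where the covering condition $\bigcup_\beta N_\beta=\left[0,\frac{1}{2}\right]$ enters: for each $x\in J$ there is some $\beta_0<\cov(\N)=\add(\N)$ with $x\in N_{\beta_0}$, so $x\in\sigma_\alpha$ for every $\alpha\ge\beta_0$, while for $x\notin J$ we have $x\notin\sigma_\alpha$ for all $\alpha$.

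With these hypotheses in hand, the first part of Lemma~\ref{lem:5} gives that the nets $\{(f_\alpha^h)_{\alpha<\add(\N)}\colon h\in\mathcal H\}$ are algebraically independent, and part (ii) gives that the algebra they generate is contained in $\mathcal{ND}_{\add(\N)}\cup\{0\}$; since $\card(\mathcal H)=\mathfrak c$, this witnesses strong $\mathfrak c$-algebrability. I do not expect a genuine obstacle here: Lemma~\ref{lem:5} was designed to absorb all the measure-theoretic work (measurability of each $f_\alpha^h$, and the failure of $L^1$-convergence governed by $\int_{J\setminus I}|\sum_i\lambda_i e^{\beta_i x}|\,d\lambda\ne0$ via Lemma~\ref{lem:3}), so the proof reduces to the bookkeeping above. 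The one substantive ingredient is the cardinal-arithmetic observation that $\add(\N)=\cov(\N)$ simultaneously keeps every partial union $\sigma_\alpha$ null and lets the full family cover $\left[0,\frac{1}{2}\right]$; this is precisely the balance that forces the index $\add(\N)$ rather than a larger cardinal.
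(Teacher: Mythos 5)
Your proposal is correct and follows essentially the same route as the paper's own proof: the same choice of a covering family $\{N_\beta\colon\beta<\cov(\N)\}$ of null subsets of $\left[0,\frac{1}{2}\right]$, the same partial unions $\sigma_\alpha=\bigcup_{\beta\le\alpha}N_\beta$ kept null by $\add(\N)=\cov(\N)$, the same functions $f_h\cdot\chi_{\sigma_\alpha\cup\left(\frac{1}{2},1\right]}$, and the same appeal to the first part and part (ii) of Lemma~\ref{lem:5} with $J=\left[0,\frac{1}{2}\right]$ and $I=\left(\frac{1}{2},1\right]$. No gaps.
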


\begin{proof}
	Let $\mathcal H$ be a Hamel basis of $\mathbb R$ over $\mathbb Q$ and take $\{N_\alpha \colon \alpha < \cov(\N) \}$ a collection of null sets witnessing $\cov(\N)$ contained in $\left[0,\frac{1}{2}\right]$ such that $\bigcup_{\alpha<\cov(\N)} N_\alpha=\left[0,\frac{1}{2}\right]$.
%	\kc{???}\ck{Remove this theorem.
%	As in case of Thm 3.4, the proof works only under assumption that $\add(\N)=\mathfrak c$.
%	But such result follows from thm 3.9.\\}
	%	\ch{Remarks as above, it seems to me that using $\kappa=\non(\mathcal N)$, you can prove in ZFC your Theorem 3.3.\\} 
	For every $h\in \mathcal H$, let us define the $\add(\N)$-sequences of functions $(f_{\alpha}^h)_{\alpha<\add(\N)}$ in the following way: for each $\alpha<\add(\N)$,
	$$
	f_\alpha^h=f_h\cdot \chi_{\left(\frac{1}{2},1\right] \cup \left(\bigcup_{\beta\leq \alpha} N_\beta\right)},
	$$
	where $f_h(x)=e^{hx}$ for every $x\in [0,1]$.
	Since $\add(\N)=\cov(\N)$ and $\alpha+1<\add(\N)$ we have that the union $\bigcup_{\beta\leq \alpha} N_\beta$ is well defined and a null set.
	
	Note that $\sigma_\alpha:=\bigcup_{\beta\leq \alpha} N_\beta\in \N$ for every $\alpha<\add(\N)$, $\sigma_\alpha\subset J\setminus I$ with $J=\left[0,\frac{1}{2}\right]$ and $I=\left(\frac{1}{2},1\right]$, for every $\alpha<\add(\N)$ and $(\chi_{\sigma_\alpha})_{\alpha<\add(\N)}$ converges pointwise everywhere to $\chi_J$.
	Hence, by the first part and the additional part (ii) of Lemma~\ref{lem:5}, we have the desired 
	result.
\end{proof}

Recall that if the CH fails, it may not be true that there exists an algebraic structure contained in $\mathcal{AN}_{\omega_1}$, since $\mathcal{AN}_{\omega_1}=\emptyset$ if, and only if, $\add(\mathcal N)>\omega_1$.
Nonetheless, the next result shows that by considering sequences of size $\add(\N)$ we can obtain an algebraic structure contained in $\mathcal{AN}_{\add(\N)}$.
In particular, we prove that $\mathcal{AN}_{\add(\N)}$ in $\left(\mathbb R^\R \right)^{\add(\N)}$ is strongly $\mathfrak c$-algebrable.

\begin{theorem}\label{thm:6_1}
	The set $\mathcal{AN}_{\add(\N)}$ in $\left(\mathbb R^\R \right)^{\add(\N)}$ is strongly $\mathfrak c$-algebrable.%
\end{theorem}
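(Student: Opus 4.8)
The plan is to mirror the proof of Theorem~\ref{thm:3}, but to index the family by the regular cardinal $\add(\N)$ itself rather than by the directed set of finite subsets of a witnessing family. Since every hypothesis of Lemma~\ref{lem:5} is stated for an arbitrary nonempty directed set, and $\add(\N)$ (linearly ordered by its usual ordinal order, hence directed) is such a set, the whole argument reduces to producing a suitable increasing net $(\sigma_\beta)_{\beta<\add(\N)}$ of null sets whose characteristic functions converge pointwise everywhere to the characteristic function of a nonmeasurable set of inner measure $0$, and then invoking part (i) of that lemma. No extra set-theoretic hypothesis is needed, which is why the result lives in the ZFC column of Table~\ref{tab:2}.

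Concretely, I would fix a Hamel basis $\mathcal H$ of $\R$ over $\mathbb Q$ and set $f_h(x):=e^{hx}$ for $h\in \mathcal H$, recalling that $\card(\mathcal H)=\mathfrak c$. Choosing $J:=[0,1]$ and $I:=(1,2)$ gives nondegenerate intervals with $J\setminus I=[0,1]$ having nonempty interior. Next I would take a family of pairwise disjoint null sets witnessing $\add(\N)$, each contained in $[0,1]$, and apply Lemma~\ref{lem:2} to obtain a nonmeasurable $V\subset [0,1]$ with inner measure $0$ and the same outer measure as the union of that family. Exactly as in Theorem~\ref{thm:3}, intersecting each member of the family with $V$ yields a family $X'$ of pairwise disjoint null subsets of $V$ with $\bigcup X'=V$ and $\card(X')=\add(\N)$ (it is infinite, for otherwise $V$ would be null, and it has cardinality exactly $\add(\N)$ by definition of $\add(\N)$). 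I would then enumerate $X'$ without repetition as $\{N_\beta \colon \beta<\add(\N)\}$, put $\sigma_\beta:=\bigcup_{\gamma\leq \beta} N_\gamma$, and define, for every $h\in \mathcal H$ and $\beta<\add(\N)$, the $\add(\N)$-sequence
	$$
	f_\beta^h := f_h \cdot \chi_{(1,2)\cup \sigma_\beta}.
	$$

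It then remains to verify the hypotheses of Lemma~\ref{lem:5}(i). Each $\sigma_\beta$ is a union of $\card(\beta+1)<\add(\N)$ null sets, hence null; the chain is increasing, and $\sigma_\beta\subset V\subset [0,1]=J\setminus I$. Crucially, $(\chi_{\sigma_\beta})_{\beta<\add(\N)}$ converges pointwise everywhere to $\chi_V$: if $x\in V$ then by pairwise disjointness $x$ lies in a unique $N_{\beta_0}$, whence $x\in \sigma_\beta$ for all $\beta\geq \beta_0$; and if $x\notin V$ then $x$ lies in no $N_\gamma$, so $x\notin \sigma_\beta$ for every $\beta$. Since $V$ is nonmeasurable of inner measure $0$ and $V\subset J\setminus I$, Lemma~\ref{lem:5}(i) gives that the algebra generated by $\{(f_\beta^h)_{\beta<\add(\N)} \colon h\in \mathcal H\}$ is contained in $\mathcal{AN}_{\add(\N)}\cup\{0\}$, while the first part of the same lemma gives algebraic independence of these $\mathfrak c$-many free generators; together these establish strong $\mathfrak c$-algebrability.

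The only genuine point to get right — the analogue of the step replacing ``finite subsets'' by ``initial segments'' — is the combination of pointwise-everywhere convergence of $(\chi_{\sigma_\beta})$ to $\chi_V$ with the nullity of each $\sigma_\beta$. Both hinge on the regularity of $\add(\N)$, which guarantees that a union of fewer than $\add(\N)$ null sets remains null, together with completeness of Lebesgue measure (so that the pieces $N_\beta=Y_\beta\cap V$ are null). Once these are secured, the rest is a direct citation of Lemma~\ref{lem:5}, so I do not expect any serious obstacle beyond bookkeeping.
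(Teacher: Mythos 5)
Your proposal is correct and follows essentially the same route as the paper's own proof: both intersect a witnessing family of pairwise disjoint null sets with the nonmeasurable $V$ from Lemma~\ref{lem:2}, re-enumerate by $\add(\N)$, take initial-segment unions $\sigma_\beta=\bigcup_{\gamma\leq\beta}N_\gamma$ (null since $\card(\beta+1)<\add(\N)$), and invoke Lemma~\ref{lem:5}(i) with $J=[0,1]$, $I=(1,2)$. The only cosmetic difference is that the paper explicitly discards the empty intersections before re-enumerating, and the nullity of the $\sigma_\beta$ is really the definition of $\add(\N)$ rather than its regularity, but neither point affects the argument.
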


\begin{proof}
	Let $\mathcal H$ be a Hamel basis of $\R$ over $\mathbb Q$ and take, for every $h\in \mathcal H$,
		$$
		f_h(x):=e^{h x},
		$$
	for any $x\in \R$.
	
	Let $\mathcal A$ be a family of pairwise disjoint null sets such that $\card(\mathcal A)=\add(\N)$, $\bigcup \mathcal A\notin \N$ and each set of $\mathcal A$ is contained in $\left[0,1\right]$.
	By Lemma~\ref{lem:2}, take $V\subseteq \bigcup \mathcal A$ a nonmeasurable set of inner measure $0$ and same outer measure as $\bigcup \mathcal A$.
	Now let $\{N_\alpha \colon \alpha<\add(\N) \}$ be an enumeration, without repetition, of $\mathcal A$.
	For every $\beta<\add(\N)$, take $N_\beta^\prime :=N_\beta \cap V$ and let $\overline{\mathcal N}:=\{N_\beta^\prime \colon \beta<\add(\N) \}\setminus \{\emptyset\}$. (Note that there might exist $\beta<\add(\N)$ such that $N_\beta^\prime=\emptyset$.)
	Clearly, by construction, we have that $V=\bigcup \overline{\mathcal N}$.
	Moreover, $\text{card}(\overline{\mathcal N})=\add(\N)$.
	Indeed, it is obvious that $\text{card}(\overline{\mathcal N})\leq \add(\N)$, so assume that $\text{card}(\overline{\mathcal N})< \add(\N)$.
	Then, by definition of $\add(\N)$, the set $V$ would be a null set, a contradiction.

	Let $\{\overline{N}_\beta \colon \beta<\add(\N) \}$ be an enumeration, without repetition, of $\overline{\mathcal N}$.
	For every $h\in \mathcal H$, let us define an $\add(\N)$-sequence of functions $(f_\alpha^h)_{\alpha<\add(\N)}$ in the following way: for each $\alpha<\add(\N)$, 
		$$
		f_{\alpha}^h:=f_h\cdot \chi_{\left(\bigcup_{\beta \leq \alpha} \overline{N}_\beta \right) \cup (1,2)}.
		$$	
%	\ck{What is $\bigcup_{\beta<\alpha^+} N_\beta^\prime$? 
%	Do you mean $\bigcup_{\beta\leq\alpha} N_\beta^\prime$?\\
%	The symbol $\alpha^+$ is used only when $\alpha$ is cardinal, not for general ordinal numbers.
%	Ordinal successor is denoted $\alpha+1$.\\}
%	\ch{\rv{Yes, you are right. The notation from set theory is a little fuzzy in my mind.}\\}
	As $\card(\alpha+1)<\add(\N)$ we have that $\bigcup_{\beta \leq \alpha} \overline{N}_\beta $ is a null set.
	
	Now, since $\sigma_\alpha:=\bigcup_{\beta \leq \alpha} \overline{N}_\beta \in \N$ for every $\alpha<\add(\N)$, $\sigma_\alpha \subset J\setminus I$ with $J=[0,1]$ and $I=(1,2)$ for every $\alpha<\add(\N)$, and $(\chi_{\sigma_\alpha})_{\alpha<\add(\N)}$ converges pointwise everywhere to $\chi_V$ where $V\subset J\setminus I$ is a nonmeasurable set with inner measure $0$, we have the desired result by the first part and the additional part (i) of Lemma~\ref{lem:5}. 
%	\ck{Some sets $N_\beta^\prime$ can be empty. 
%	Doesn't it influence argument for algebraic independence?\\}
%	\ch{\rv{It kind of does but it can be fixed.}\\}
\end{proof}

\begin{corollary}\label{cor:3}
	$\mathrm{LIM}_{\add(\N)}(\mathcal L)\supsetneq \mathcal L$.
\end{corollary}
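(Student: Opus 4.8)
The plan is to read the corollary off directly from Theorem~\ref{thm:6_1}. First I would dispose of the inclusion $\mathcal L\subseteq \mathrm{LIM}_{\add(\N)}(\mathcal L)$, which is immediate: for any measurable $f\in\mathcal L$ the constant $\add(\N)$-sequence $(f)_{\alpha<\add(\N)}$ converges pointwise everywhere to $f$, so $f\in\mathrm{LIM}_{\add(\N)}(\mathcal L)$. Hence it only remains to establish that the inclusion is strict, i.e. to exhibit a single function in $\mathrm{LIM}_{\add(\N)}(\mathcal L)$ that fails to be Lebesgue measurable.

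For this I would invoke the construction underlying Theorem~\ref{thm:6_1}. Fix any $h\in\mathcal H$ (so $h\neq 0$) and consider the $\add(\N)$-sequence $(f_\alpha^h)_{\alpha<\add(\N)}$ built there, with $f_\alpha^h=f_h\cdot\chi_{(\bigcup_{\beta\le\alpha}\overline{N}_\beta)\cup(1,2)}$ and $f_h(x)=e^{hx}$. Each term is measurable because $\bigcup_{\beta\le\alpha}\overline{N}_\beta$ is a null set and $f_h$ is continuous, so $(f_\alpha^h)_{\alpha<\add(\N)}\subset\mathcal L$. By the computation in part (i) of Lemma~\ref{lem:5}, since the characteristic functions $(\chi_{\sigma_\alpha})_{\alpha<\add(\N)}$ of $\sigma_\alpha=\bigcup_{\beta\le\alpha}\overline{N}_\beta$ converge pointwise everywhere to $\chi_V$, the sequence $(f_\alpha^h)_{\alpha<\add(\N)}$ converges pointwise everywhere to $g:=f_h\cdot\chi_{V\cup(1,2)}$. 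As $V\subset[0,1]$ is nonmeasurable of inner measure $0$ and disjoint from the interval $(1,2)$, Lemma~\ref{lem:4} gives that $g$ is nonmeasurable. Thus $g\in\mathrm{LIM}_{\add(\N)}(\mathcal L)\setminus\mathcal L$, and the inclusion is strict.

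The one point that needs care --- and which is really the only obstacle --- is that $\mathrm{LIM}_{\add(\N)}$ is defined via pointwise convergence at \emph{every} point, whereas membership in $\mathcal{AN}_{\add(\N)}$ only requires convergence almost everywhere; so it is not enough to quote that $\mathcal{AN}_{\add(\N)}\neq\emptyset$. This is why I would refer to the explicit construction of Theorem~\ref{thm:6_1}, where Lemma~\ref{lem:5}(i) delivers convergence at every point of $\R$, rather than to the abstract nonemptiness of $\mathcal{AN}_{\add(\N)}$. Note also that $\add(\N)$ is a regular cardinal, consistent with the standing assumption on the indexing cardinal, and that no extra set-theoretic hypothesis is needed, since Theorem~\ref{thm:6_1} is a theorem of \ZFC.
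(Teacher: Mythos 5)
Your proof is correct and follows exactly the route the paper intends: the corollary is stated without proof as an immediate consequence of Theorem~\ref{thm:6_1}, whose construction (via Lemma~\ref{lem:5}(i)) produces an $\add(\N)$-sequence of measurable functions converging pointwise \emph{everywhere} to the nonmeasurable function $f_h\cdot\chi_{V\cup(1,2)}$. Your explicit observation that everywhere convergence (rather than mere a.e.\ convergence, which is all that membership in $\mathcal{AN}_{\add(\N)}$ guarantees) is what is needed for $\mathrm{LIM}_{\add(\N)}$ is a genuine point of care that the paper leaves implicit.
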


\begin{corollary}\label{cor:4}
	It is independent of $\ZFC$ that $\mathrm{LIM}_{\add(\N)}(\mathcal L)=\mathrm{LIM}_{\omega_1}(\mathcal L)$.
\end{corollary}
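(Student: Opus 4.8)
The plan is to produce two models of $\ZFC$, one validating the equality and one refuting it, so that neither the equality nor its negation is a theorem of $\ZFC$. The two ingredients will be Corollary~\ref{cor:3}, which holds outright in $\ZFC$, and the known equivalence $\mathcal{AN}_{\omega_1}=\emptyset$ if and only if $\add(\N)>\omega_1$ (see \cite{Di,Na1}).

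First I would record the easy half of the link between $\mathcal{AN}_{\omega_1}$ and the transfinite closure of $\mathcal{L}$: if $\mathcal{AN}_{\omega_1}=\emptyset$, then $\mathrm{LIM}_{\omega_1}(\mathcal{L})=\mathcal{L}$. The inclusion $\mathcal{L}\subseteq\mathrm{LIM}_{\omega_1}(\mathcal{L})$ is witnessed by constant sequences. For the reverse inclusion, suppose $(f_\alpha)_{\alpha<\omega_1}\subset\mathcal{L}$ converges pointwise everywhere to $f$; then it converges a.e.\ to $f$, and since $\mathcal{AN}_{\omega_1}=\emptyset$ forbids an a.e.-limit of measurable functions from being nonmeasurable, $f\in\mathcal{L}$. (The converse implication also holds, because an a.e.-convergent sequence can be made everywhere-convergent by setting each term to $0$ on the null set where convergence fails, which alters neither the measurability of the terms nor that of the limit since Lebesgue measure is complete; but only the direction above is needed below.)

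For a model in which the equality holds, I would assume CH. Then $\omega_1\le\add(\N)\le\co=\omega_1$ forces $\add(\N)=\omega_1$, so the operators $\mathrm{LIM}_{\add(\N)}$ and $\mathrm{LIM}_{\omega_1}$ are literally the same and the equality is immediate. For a model in which the equality fails, I would assume Martin's Axiom together with $\neg$CH, under which $\add(\N)=\co>\omega_1$. The cited equivalence then gives $\mathcal{AN}_{\omega_1}=\emptyset$, hence $\mathrm{LIM}_{\omega_1}(\mathcal{L})=\mathcal{L}$ by the previous paragraph, while Corollary~\ref{cor:3} gives $\mathrm{LIM}_{\add(\N)}(\mathcal{L})\supsetneq\mathcal{L}$. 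Therefore $\mathrm{LIM}_{\add(\N)}(\mathcal{L})\supsetneq\mathcal{L}=\mathrm{LIM}_{\omega_1}(\mathcal{L})$, so the equality fails.

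Since both CH and $\mathrm{MA}+\neg\mathrm{CH}$ are consistent relative to $\ZFC$, this establishes the independence. The only substantial external input is the equivalence $\mathcal{AN}_{\omega_1}=\emptyset\iff\add(\N)>\omega_1$; beyond it, the main point to get right is to choose models in which, respectively, $\add(\N)$ equals $\omega_1$ and strictly exceeds it, after which Corollary~\ref{cor:3} does the rest.
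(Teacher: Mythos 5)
Your proof is correct and follows essentially the same route as the paper: equality under CH (where $\add(\N)=\omega_1$), and failure under $\mathrm{MA}+\neg\mathrm{CH}$ by combining $\mathrm{LIM}_{\omega_1}(\mathcal L)=\mathcal L$ with Corollary~\ref{cor:3}. The only difference is that the paper cites Dindo\v{s} directly for $\mathrm{LIM}_{\omega_1}(\mathcal L)=\mathcal L$, whereas you derive it from the equivalence $\mathcal{AN}_{\omega_1}=\emptyset\iff\add(\N)>\omega_1$ (itself cited from the same sources), which is a harmless and correct variation.
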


\begin{proof}
	Under CH it is clear that $\text{LIM}_{\add(\N)}(\mathcal L)=\text{LIM}_{\omega_1}(\mathcal L)$.
	
	Under MA+$\neg$CH, in which $\omega_1<\add(\N)$ (\cite[theorem~2.21]{Ku}), we have that $\text{LIM}_{\omega_1}(\mathcal L)=\mathcal L$ by the proof of \cite[theorem~2]{Di} (see also \cite[remark~2]{Di}), while $\text{LIM}_{\add(\N)}(\mathcal L)\neq \mathcal L$ from Corollary~\ref{cor:3}.
\end{proof}

By considering $\kappa$-sequences in $\mathcal{AN}_{\kappa}$ with an appropiate $\kappa\geq \add(\N)$, we may improve the size of the generators of an algebra contained in $\mathcal{AN}_{\kappa}$ in comparison with Theorem~\ref{thm:6_1}.
However, the property of strong algebrability may be lost.
This is shown in Theorems~\ref{thm:6_1_3}, \ref{thm:6_1_1} and~\ref{thm:6_1_2} below.

%\ck{Argument in blue is well too complicated. See my argument.\\} 
%\ch{\rv{Yes, it is simpler. I put this argument instead.}\\}

\begin{theorem}\label{thm:6_1_3}
	The set $\mathcal{AN}_{\non(\N)}$ in $\left(\mathbb R^\R \right)^{\non(\N)}$ is $2^{\non(\N)}$-algebrable.
\end{theorem}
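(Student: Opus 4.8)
The plan is to place inside $\mathcal{AN}_{\non(\N)}\cup\{0\}$ a large \emph{non-free} commutative algebra whose generators are essentially characteristic functions; their idempotency is exactly what blocks strong algebrability and explains why here we obtain only $2^{\non(\N)}$-algebrability, rather than the strong $2^{\mathfrak c}$-algebrability of Theorems~\ref{thm:6_1_1} and~\ref{thm:6_1_2}. The construction parallels Lemma~\ref{lem:5}(i), but the single nonmeasurable set $V$ and the Hamel-basis generators are replaced by an independent family of nonmeasurable sets and idempotent generators. The heart of the matter is a set-theoretic lemma: I want a family $\{W_\alpha:\alpha<2^{\non(\N)}\}$ of subsets of $[0,1]$, independent in the sense of Definition~\ref{def:1}, such that $U:=\bigcup_\alpha W_\alpha$ has cardinality $\non(\N)$ and inner measure $0$, while every nonempty finite Boolean combination $W_{\alpha_1}^{\varepsilon_1}\cap\cdots\cap W_{\alpha_n}^{\varepsilon_n}$ (taken inside $U$) has positive outer measure. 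Since $U$ has inner measure $0$, every subset of $U$ has inner measure $0$, so a function supported on $U$ is measurable if and only if its support is null; this is the mechanism that produces nonmeasurable limits.

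To build such a family I would first produce $\non(\N)$ pairwise disjoint non-null ``blocks'' with inner-measure-zero union. Fix, as in the footnote preceding \eqref{equ:1}, a family $\{B_\xi:\xi<\mathfrak c\}$ of $\mathfrak c$ pairwise disjoint Bernstein subsets of $[0,1]$. If $\non(\N)<\mathfrak c$, use the localization of $\non(\N)$ (for any non-null $E$ the least size of a non-null subset of $E$ is $\non(\N)$) to choose, inside each $B_\xi$ with $\xi<\non(\N)$, a non-null set $N_\xi$ of cardinality $\non(\N)$; the $N_\xi$ are disjoint and non-null, $U:=\bigcup_{\xi<\non(\N)}N_\xi$ has size $\non(\N)$, and $U\subseteq\bigcup_{\xi<\non(\N)}B_\xi$ has inner measure $0$ because any leftover $B_\eta$ with $\eta\ge\non(\N)$ is Bernstein and meets every perfect set, so no positive-measure closed set fits inside $U$. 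If $\non(\N)=\mathfrak c$, I would instead partition a single Bernstein set $B$ into $\mathfrak c$ disjoint Bernstein pieces $N_\xi$, each non-null, with $U=B$ still of inner measure $0$. Now apply the Fichtenholz--Kantorovich--Hausdorff theorem (Theorem~\ref{thm:FKH}) to the index set $\non(\N)$ to get an independent family $\{C_\alpha:\alpha<2^{\non(\N)}\}$ of subsets of $\non(\N)$, and set $W_\alpha:=\bigcup_{\xi\in C_\alpha}N_\xi$. Each Boolean combination of the $W_\alpha$ equals $\bigcup\{N_\xi:\xi\in D\}$ for the corresponding nonempty Boolean combination $D$ of the $C_\alpha$, hence contains a full block $N_\xi$ and is non-null, which is exactly the property required.

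With the family in hand, enumerate $U=\{z_\zeta:\zeta<\non(\N)\}$ and for each $\alpha$ put $\sigma_\beta^\alpha:=W_\alpha\cap\{z_\zeta:\zeta\le\beta\}$ and $u_\alpha:=(\chi_{\sigma_\beta^\alpha})_{\beta<\non(\N)}$. Each $\sigma_\beta^\alpha$ is null since $\card(\beta+1)<\non(\N)$, the sets increase in $\beta$, and $(\chi_{\sigma_\beta^\alpha})_\beta$ converges pointwise everywhere to $\chi_{W_\alpha}$ (and products converge to the characteristic function of the corresponding intersection). As the generators are idempotent, a monomial $\prod_{\alpha\in A}u_\alpha^{k_\alpha}$ with $A$ finite nonempty depends only on $A$, equalling $(\chi_{\sigma_\beta^A})_\beta$ with $\sigma_\beta^A:=\bigcap_{\alpha\in A}\sigma_\beta^\alpha$, so the generated algebra is the linear span of $\{(\chi_{\sigma_\beta^A})_\beta:A\in[2^{\non(\N)}]^{<\omega}\setminus\{\emptyset\}\}$. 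I would then verify: (i) these sequences are linearly independent, by testing a vanishing combination $\sum_A c_A(\chi_{\sigma_\beta^A})_\beta$ on a point of the non-null atom attached to an inclusion-minimal $A$ with large $\beta$, which isolates one $c_A=0$; this yields dimension $2^{\non(\N)}$; (ii) $\{u_\alpha\}$ is a minimal generating set of cardinality $2^{\non(\N)}$, since $\chi_{W_\alpha}$ is nonzero on the non-null atom lying only in $W_\alpha$ whereas every element of the subalgebra generated by the remaining $u_{\alpha'}$ vanishes there in the limit; (iii) every nonzero element converges to a nonmeasurable function, for writing its limit as $\sum_i\lambda_i\chi_{W_{A_i}}$ with $\lambda_i\neq0$ and choosing an inclusion-minimal $A_{i_0}$, on the atom where exactly the indices of $A_{i_0}$ are ``on'' the limit equals $\lambda_{i_0}\neq0$, so its support contains this non-null set while lying inside the inner-measure-zero set $U$, forcing nonmeasurability. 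Together (i)--(iii) give $(2^{\non(\N)},2^{\non(\N)})$-algebrability, i.e.\ $2^{\non(\N)}$-algebrability.

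The main obstacle is the set-theoretic lemma of the second paragraph, namely producing $\non(\N)$ pairwise disjoint non-null blocks with inner-measure-zero union together with the independent spreading that keeps all Boolean combinations non-null; the delicate points are the localization of $\non(\N)$ to a Bernstein set and the split into the cases $\non(\N)<\mathfrak c$ and $\non(\N)=\mathfrak c$. Once this family is secured, the algebraic verifications are routine adaptations of the arguments already used for $\mathcal{AN}_{\add(\N)}$ in Theorem~\ref{thm:6_1} and in Lemma~\ref{lem:5}.
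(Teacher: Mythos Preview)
Your overall architecture coincides with the paper's: disjoint non-null ``blocks'' indexed by $\non(\N)$, spread via Fichtenholz--Kantorovich--Hausdorff into $2^{\non(\N)}$ independent unions $W_\alpha$, idempotent characteristic-function generators built from initial segments of size $<\non(\N)$, and the three verifications (linear independence on atoms, minimality, nonmeasurable limits). The paper even makes the same case split $\non(\N)=\mathfrak c$ versus $\non(\N)<\mathfrak c$, and your single enumeration of $U$ versus the paper's separate enumerations of each $V_\alpha$ is an inessential variation.

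The one genuine gap is your construction of the blocks when $\non(\N)<\mathfrak c$. You assert a ``localization of $\non(\N)$'' to produce, inside each Bernstein set $B_\xi$, a non-null $N_\xi$ of cardinality $\non(\N)$; but the statement that an arbitrary non-null set contains a non-null subset of size exactly $\non(\N)$ is not a routine ZFC fact (the measure-isomorphism argument only handles Borel sets), and you give no proof. The paper avoids this entirely: it fixes one non-null $Z$ with $\card(Z)=\non(\N)$ and takes pairwise disjoint translates $Z_\xi=Z+r_\xi$, choosing $r_\xi\in\R\setminus\bigcup_{\eta<\xi}(r_\eta+(Z-Z))$, which is possible since that union has cardinality $<\mathfrak c$. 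Each translate is non-null and of size $\non(\N)$, and their union over $\xi<\non(\N)$ has size $\non(\N)<\mathfrak c$, hence contains no perfect set and has inner measure $0$. Substituting these $Z_\xi$ for your $N_\xi$ (and dropping the Bernstein sets, which were only there to force inner measure $0$ and are now unnecessary), the remainder of your argument goes through verbatim; your uniform nonmeasurability argument via the inner-measure-zero envelope $U$ is in fact a bit cleaner than the paper's case analysis of $F^{-1}(\lambda)$.
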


\begin{proof}
	If $\non(\N)=\mathfrak c$, take $\{Z_\xi \colon \xi<\mathfrak c \}$ as a family of $\mathfrak c$-many pairwise disjoint Bernstein sets.
	If $\non(\N)<\mathfrak c$, take $Z\subset \R$ a nonmeasurable set with $\card(Z)=\non(\N)$ and put 
		$Z_\xi:=Z+r_\xi$, where numbers $r_\xi\in \R$ are chosen inductively so that 
		$$
		(Z+r_\xi)\cap \left(\bigcup_{\eta<\xi} (Z+r_\eta)\right)=\emptyset,
		$$
	i.e., $r_\xi\in\R\setminus \bigcup_{\eta<\xi} \left(r_\eta+(Z-Z)\right)$.
	This can be done since 
		$$
		\card\left(\bigcup_{\eta<\xi} \left(r_\eta+(Z-Z)\right) \right)<\mathfrak c.
		$$
	Note that, in the case when $\non(\N)<\mathfrak c$, the family $\{Z_\xi \colon \xi<\mathfrak c \}$ is a family of $\mathfrak c$-many pairwise disjoint nonmeasurable subsets of $\R$ each one having cardinality $\non(\N)$.
	
	By the Fichtenholz-Kantorovich-Hausdorff theorem, let $\{D_\alpha \colon \alpha< 2^{\non(\N)} \}$ be an independent family of subsets of $\non(\N)$ of cardinality $2^{\non(\N)}$ such that each $D_\alpha$ has cardinality $\non(\N)$.
	For every $\alpha<2^{\non(\N)}$, let
		$$
		V_\alpha := \bigcup_{\xi\in D_\alpha} Z_\xi.
		$$
	Notice that $\card(V_\alpha)=\non(\N)$.
	
	For each $\alpha<2^{\non(\N)}$, let $\{x_{\alpha,\beta} \colon \beta<\non(\N) \}$ be an enumeration, without repetition, of $V_\alpha$; and let us define the $\non(\N)$-sequence of functions $(f_\beta^\alpha)_{\beta<\non(\N)}$ as  follows: for every $\beta<\non(\N)$,
		$$
		f_\beta^\alpha=\chi_{\{x_{\alpha,\gamma} \colon \gamma\leq \beta \}}.
		$$
	Since $\card(\{x_{\alpha,\gamma} \colon \gamma \leq \beta \})=\card(\beta+1)<\non(\N)$, it is obvious that $\{x_{\alpha,\gamma} \colon \gamma\leq \beta \}$ is a null set for every $\alpha<2^{\non(\N)}$. 
	Therefore, each $f_\beta^\alpha$ is measurable.
	
	It remains to prove that the $\non(\N)$-sequences of functions in 
		$$
		\mathcal S:=\{(f_\beta^\alpha)_{\beta<\non(\N)} \colon \alpha<2^{\non(\N)} \}
		$$ 
	is linearly independent, $\mathcal S$ is a minimal system of generators of an algebra and any nonzero algebraic combination of the $\non(\N)$-sequences in $\mathcal S$ belongs to $\mathcal{AN}_{\non(\N)}$.

	Let $\lambda_1,\ldots,\lambda_n\in \mathbb R$, $\alpha_1,\ldots,\alpha_n<2^{\non(\N)}$ be distinct and take the $\non(\N)$-sequence of functions
		$$
		(F_\beta)_{\beta<\non(\N)}:=\sum_{i=1}^n \lambda_i (f_\beta^{\alpha_i})_{\beta<\non(\N)}.
		$$
	Assume that $(F_\beta)_{\beta<\non(\N)}\equiv 0$.
	By definition of a family of independent subsets, there is a $\xi\in D_{\alpha_1}\setminus \bigcup_{i=2}^n D_{\alpha_i}$.
	Hence, by construction, $Z_\xi \subset V_{\alpha_1}\setminus \bigcup_{i=2} V_{\alpha_i}$.
	Now take any $x\in Z_\xi$, then there exists $\beta<\non(\N)$ such that $x=x_{\alpha_1,\beta}$.
	Moreover, observe that $f_\beta^{\alpha_1}(x)=1$ and $f_\beta^{\alpha_i}(x)=0$ for every $i\in \{2,\ldots,n \}$.
	Thus, we have
		$$
		F_\beta(x)=\lambda_1=0.
		$$
	By repeating similar arguments, note that $\lambda_i=0$ for every $i\in \{1,\ldots,n \}$.
	
	Now let us prove that $\mathcal S$ is a minimal system of generators of the algebra generated by $\mathcal S$.
	Take $\alpha<2^{\non(\N)}$ and assume that $(f_\beta^\alpha)_{\beta<\non(\N)}$ belongs to the algebra generated by $\mathcal S\setminus \{(f_\beta^\alpha)_{\beta<\non(\N)} \}$.
	Then there exist $\alpha_1,\ldots,\alpha_m<2^{\non(\N)}$ distinct (with $m\in \mathbb N$) and $\lambda_1,\ldots,\lambda_n\in \R\setminus \{0\}$ (with $n\in \mathbb N$) such that
		$$
		(f_\beta^\alpha)_{\beta<\non(\N)}=\sum_{i=1}^n \lambda_i \prod_{j=1}^m (f_\beta^{\alpha_j})_{\beta<\non(\N)}^{k_{j,i}}
		$$
	with $k_{j,i}\in \mathbb N_0$, $\sum_{j=1}^m k_{j,i}\geq 1$ and the $m$-tuples $(k_{1,i},\ldots,k_{m,i})$ are distinct for every $i\in \{1,\ldots,n \}$.
	By definition of a family of independent subsets, there exists $\xi \in D_{\alpha}\setminus \bigcup_{i=1}^m D_{\alpha_i}$.
	Now take any $x\in Z_\xi\subset V_\alpha\setminus \bigcup_{i=1}^m V_{\alpha_i}$.
	Then, by construction, there exists $\beta<\non(\N)$ such that $x=x_{\alpha,\beta}$ in which case we have that
		$$
		1=f_\beta^\alpha(x)=\sum_{i=1}^n \lambda_i \prod_{j=1}^m (f_\beta^{\alpha_j})^{k_{j,i}}(x)=0,
		$$
	a contradiction.
	
	Let $\alpha_1,\ldots,\alpha_m<2^{\non(\N)}$ be distinct, $\lambda_1,\ldots,\lambda_n\in \R\setminus \{0\}$ and 
		$$
		(F_\beta)_{\beta<\non(\N)}:=\sum_{i=1}^n \lambda_i \prod_{j=1}^m (f_\beta^{\alpha_j})_{\beta<\non(\N)}^{k_{j,i}}
		$$
	with $k_{j,i}\in \mathbb N_0$, $\sum_{j=1}^m k_{j,i}\geq 1$ and the $m$-tuples $(k_{1,i},\ldots,k_{m,i})$ are distinct for every $i\in \{1,\ldots,n \}$.
	Assume that $(F_\beta)_{\beta<\non(\N)}\not\equiv 0$.
	It is obvious that $(F_\beta)_{\beta<\non(\N)}$ is a $\non(\N)$-sequence of measurable functions that converges pointwise everywhere to
		$$
		F:=\sum_{i=1}^n \lambda_i \prod_{j=1}^m \chi_{V_{\alpha_j}}^{k_{j,i}}.
		$$
	Let us show that $F$ is nonmeasurable.
	Since $(F_\beta)_{\beta<\non(\N)}$ is not identically $0$, there exist $\beta_0<\non(\N)$ and $\{i_1,\ldots,i_r \}\subset \{1,\ldots,n \}$ (with $r\in \{1,\ldots,n \})$ such that $F_\beta[\R]\supset \left\{\sum_{j=1}^r \lambda_{i_j} \right\}$ for every $\beta_0\leq \beta<\non(\N)$.
	Let us denote $\sum_{j=1}^r \lambda_{i_j}$ by $\lambda$.
	Note that, by construction, we have $F[\R]\supset \{\lambda \}$.
	Moreover, notice that $F^{-1}(\lambda)$ is the finite union of sets of the form
		$$
		\Lambda:=\bigcup_{\xi \in \bigcap_{i=1}^n D_{\alpha_i}^{\varepsilon_i}} Z_\xi,
		$$
	where $\varepsilon_i\in \{0,1 \}$ for every $i\in \{1,\ldots,n \}$ and $\sum_{i=1}^n \varepsilon_i\geq 1$.
	(Recall that $Y^1:=Y$ and $Y^0:=X\setminus Y$ for any sets $Y\subseteq X$, see Definition~\ref{def:1}.)
	On the one hand, if $\non(\N)=\mathfrak c$, then $F^{-1}(\lambda)$ is a Bernstein set.
	Indeed, if we take $\xi_0\in \non(\N)\setminus \bigcup_{i=1}^n D_{\alpha_i}$ we have that $Z_{\xi_0}\subset F^{-1}(0)$, i.e., $Z_{\xi_0}\cap Z_\xi=\emptyset$ for any $Z_{\xi}\subset F^{-1}(\lambda)$.
	Since $Z_{\xi_0}$ is a Bernstein set, it is clear that $F^{-1}(\lambda)$ is also a Bernstein set.
	On the other hand, if $\non(\N)<\mathfrak c$ and we assume by means of contradiction that $F^{-1}(\lambda)$ is measurable, we would have that $F^{-1}(\lambda) \in \N$ since $\card\left(\Lambda \right)=\non(\N)<\mathfrak c$.
	But the latter implies that each $Z_\xi\subset F^{-1}(\lambda)$ is a null set, 
	a contradiction.
\end{proof}

\begin{corollary}\label{cor:1}
	$\mathrm{LIM}_{\non(\N)}(\mathcal L)\supsetneq \mathcal L$.
\end{corollary}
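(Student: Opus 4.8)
The plan is to treat the two inclusions separately, the forward one being immediate and the strictness coming straight from the machinery of Theorem~\ref{thm:6_1_3}. First, $\mathcal L\subseteq \mathrm{LIM}_{\non(\N)}(\mathcal L)$ holds trivially, since every measurable $f$ is the $\non(\N)$-limit of the constant $\non(\N)$-sequence $(f)_{\alpha<\non(\N)}\subset \mathcal L$. Hence it suffices to exhibit a single \emph{nonmeasurable} function lying in $\mathrm{LIM}_{\non(\N)}(\mathcal L)$, that is, a nonmeasurable pointwise-everywhere limit of a $\non(\N)$-sequence of measurable functions.

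The source of such a function is Theorem~\ref{thm:6_1_3}: as $\mathcal{AN}_{\non(\N)}$ is $2^{\non(\N)}$-algebrable, it is in particular nonempty. Concretely, I would reuse the $\non(\N)$-sequences built there, namely (for a fixed $\alpha$) $f_\beta^\alpha=\chi_{\{x_{\alpha,\gamma}\colon \gamma\leq \beta\}}$, each of which is measurable because $\{x_{\alpha,\gamma}\colon \gamma\leq \beta\}$ has cardinality $\card(\beta+1)<\non(\N)$ and is therefore null. For a fixed $x$, if $x=x_{\alpha,\beta_0}\in V_\alpha$ then $f_\beta^\alpha(x)=1$ for all $\beta\geq \beta_0$, while if $x\notin V_\alpha$ then $f_\beta^\alpha(x)=0$ for all $\beta$; thus $(f_\beta^\alpha)_{\beta<\non(\N)}$ converges pointwise \emph{everywhere} to $\chi_{V_\alpha}$, and $V_\alpha$ is nonmeasurable. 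Consequently $\chi_{V_\alpha}\in \mathrm{LIM}_{\non(\N)}(\mathcal L)\setminus \mathcal L$, which yields the strict inclusion.

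The only point that needs care is the mismatch between the two notions of convergence: the family $\mathcal{AN}_{\non(\N)}$ is defined by convergence \emph{almost everywhere}, whereas $\mathrm{LIM}_{\non(\N)}$ requires convergence \emph{everywhere}. I expect this to be the main (indeed the only) obstacle, and I would dispatch it in one of two ways. The explicit witnesses above already converge everywhere, so no upgrade is needed there. More generally, if $(g_\beta)_{\beta<\non(\N)}$ is any member of $\mathcal{AN}_{\non(\N)}$ converging a.e. to a nonmeasurable $h$ with exceptional null set $Z$, then replacing $g_\beta$ by $g_\beta\cdot \chi_{\R\setminus Z}$ keeps each term measurable (as $Z$ is null, hence measurable) and produces a $\non(\N)$-sequence converging everywhere to $h\cdot \chi_{\R\setminus Z}$; this limit is still nonmeasurable, for otherwise $h$ would agree a.e. with a measurable function and hence be measurable by completeness of the Lebesgue measure, contradicting the choice of $h$. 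Either route places a nonmeasurable function in $\mathrm{LIM}_{\non(\N)}(\mathcal L)$ and completes the proof.
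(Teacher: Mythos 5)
Your proof is correct and follows the route the paper intends: Corollary~\ref{cor:1} is stated without proof as an immediate consequence of Theorem~\ref{thm:6_1_3}, whose explicit witnesses $\left(\chi_{\{x_{\alpha,\gamma}\colon \gamma\leq \beta\}}\right)_{\beta<\non(\N)}$ are measurable and converge pointwise everywhere to the nonmeasurable function $\chi_{V_\alpha}$, while the inclusion $\mathcal L\subseteq \mathrm{LIM}_{\non(\N)}(\mathcal L)$ is trivial via constant sequences. Your remark on the mismatch between a.e.\ convergence (in the definition of $\mathcal{AN}_{\non(\N)}$) and everywhere convergence (in the definition of $\mathrm{LIM}_{\non(\N)}$), together with the repair by multiplying by $\chi_{\R\setminus Z}$, addresses a point the paper leaves implicit, and both of your routes are sound.
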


\begin{corollary}\label{cor:2}
	It is independent of $\ZFC$ that $\mathrm{LIM}_{\non(\N)}(\mathcal L)=\mathrm{LIM}_{\omega_1}(\mathcal L)$.
\end{corollary}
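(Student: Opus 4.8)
The plan is to follow the template of the proof of Corollary~\ref{cor:4}, exhibiting one model of ZFC in which the displayed equality holds and another in which it fails; the independence then follows.

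First I would check that the equality is consistent by working under CH. Under CH we have $\mathfrak c=\omega_1$, and since the ZFC-provable chain $\omega_1\leq\add(\N)\leq\non(\N)\leq\mathfrak c$ then collapses, it forces $\non(\N)=\omega_1$. Hence $\mathrm{LIM}_{\non(\N)}(\mathcal L)$ and $\mathrm{LIM}_{\omega_1}(\mathcal L)$ are literally the same object and the equality holds trivially.

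Next I would check that the inequality is consistent by working under MA$+\neg$CH. Here $\omega_1<\add(\N)$ (see \cite[theorem~2.21]{Ku}, as already used in Corollary~\ref{cor:4}), whence $\omega_1<\add(\N)\leq\non(\N)$. From $\omega_1<\add(\N)$ I would invoke Dikranjan's theorem via the proof of \cite[theorem~2]{Di} (see also \cite[remark~2]{Di}) to obtain $\mathrm{LIM}_{\omega_1}(\mathcal L)=\mathcal L$, exactly as in Corollary~\ref{cor:4}. On the other hand, Corollary~\ref{cor:1} gives in ZFC that $\mathrm{LIM}_{\non(\N)}(\mathcal L)\supsetneq\mathcal L$. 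Combining these two facts yields $\mathrm{LIM}_{\non(\N)}(\mathcal L)\neq\mathrm{LIM}_{\omega_1}(\mathcal L)$ in this model.

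The argument presents no genuine obstacle beyond bookkeeping, as it rests entirely on the strict inclusion of Corollary~\ref{cor:1} together with Dikranjan's result, both of which are already available. The only point requiring care is verifying the relevant cardinal inequalities in each of the two chosen models --- namely $\non(\N)=\omega_1$ under CH and $\omega_1<\non(\N)$ under MA$+\neg$CH --- so that in the first model the two closure operators coincide while in the second they are witnessed to be distinct by $\mathcal L$ itself sitting strictly inside $\mathrm{LIM}_{\non(\N)}(\mathcal L)$ but equal to $\mathrm{LIM}_{\omega_1}(\mathcal L)$.
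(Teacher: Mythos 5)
Your proposal is correct and follows essentially the same route as the paper: under CH the two closure operators coincide since $\non(\N)=\omega_1$, while under MA$+\neg$CH one combines $\omega_1<\add(\N)\leq\non(\N)$ with the result of \cite{Di} (note the author is Dindo\v{s}, not Dikranjan) to get $\mathrm{LIM}_{\omega_1}(\mathcal L)=\mathcal L$, and with Corollary~\ref{cor:1} to get $\mathrm{LIM}_{\non(\N)}(\mathcal L)\supsetneq\mathcal L$.
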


\begin{proof}
	Under CH we have $\text{LIM}_{\non(\N)}(\mathcal L)=\text{LIM}_{\omega_1}(\mathcal L)$.
	
	Under MA+$\neg$CH we have $\omega_1<\add(\N)=\non(\mathcal N)$ (\cite[theorem~2.21]{Ku}).
	Hence $\text{LIM}_{\omega_1}(\mathcal L)=\mathcal L$, while $\text{LIM}_{\non(\N)}(\mathcal L)\neq \mathcal L$ from Corollary~\ref{cor:1}.
\end{proof}

For the final
two theorems, Theorems~\ref{thm:6_1_1} and~\ref{thm:6_1_2}, we have that $\mathcal{AN}_{\mathfrak c}$ and $\mathcal{AN}_{\add(\N)}$ are strongly $2^{\mathfrak c}$-algebrable under $\non(\N)=\mathfrak c$ and $\add(\N)=\cov(\N)$, respectively.

\begin{theorem}\label{thm:6_1_1}
	If $\non(\N)=\mathfrak c$, then $\mathcal{AN}_{\mathfrak c}$ in $\left(\mathbb R^\R \right)^{\mathfrak c}$ is strongly $2^{\mathfrak c}$-algebrable.
\end{theorem}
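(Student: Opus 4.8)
The plan is to mimic the proof of Theorem~\ref{thm:2}, replacing the directed set of finite subsets used there by the ordinal $\mathfrak c$ (with its usual order, which is a directed set) and using the hypothesis $\non(\N)=\mathfrak c$ to guarantee that certain initial segments are null. Everything then reduces to a single application of Lemma~\ref{lem:1} with $\mathcal A=\mathfrak c$.

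First I would fix a bijection $\psi\colon\beta\mathbb N\to 2^{\mathfrak c}$ and, for each $\mathcal U\in\beta\mathbb N$, consider the Bernstein set $V_{\psi(\mathcal U)}^\R$ defined in \eqref{equ:1}. Since a Bernstein set in a Polish space has cardinality $\mathfrak c$, I can enumerate $V_{\psi(\mathcal U)}^\R$ without repetition as $\{y_\gamma^{\mathcal U}\colon\gamma<\mathfrak c\}$ and set, for every $\beta<\mathfrak c$,
$$
\sigma_\beta^{\mathcal U}:=\{y_\gamma^{\mathcal U}\colon\gamma\leq\beta\}.
$$
The crucial observation is that $\card(\sigma_\beta^{\mathcal U})=\card(\beta+1)<\mathfrak c=\non(\N)$, so each $\sigma_\beta^{\mathcal U}$ is a null set; this is precisely the step where the hypothesis $\non(\N)=\mathfrak c$ is consumed.

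Next I would check the two hypotheses of Lemma~\ref{lem:1} for the $\mathfrak c$-sequences $(\sigma_\beta^{\mathcal U})_{\beta<\mathfrak c}$. On the one hand, $\sigma_\beta^{\mathcal U}\in\mathcal N$ for every $\beta<\mathfrak c$ and every $\mathcal U\in\beta\mathbb N$, as just noted. On the other hand, $(\chi_{\sigma_\beta^{\mathcal U}})_{\beta<\mathfrak c}$ converges pointwise everywhere to $\chi_{V_{\psi(\mathcal U)}^\R}$: if $x=y_{\gamma_0}^{\mathcal U}\in V_{\psi(\mathcal U)}^\R$ then $x\in\sigma_\beta^{\mathcal U}$ for all $\beta\geq\gamma_0$, while if $x\notin V_{\psi(\mathcal U)}^\R$ then $x\notin\sigma_\beta^{\mathcal U}$ for every $\beta$. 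Lemma~\ref{lem:1} then yields at once that the $\mathfrak c$-sequences in
$$
\mathcal S:=\left\{\left((g_{\mathcal U}\circ\varphi)\cdot\chi_{\sigma_\beta^{\mathcal U}}\right)_{\beta<\mathfrak c}\colon\mathcal U\in\beta\mathbb N\right\}
$$
are algebraically independent and that the algebra they generate is contained in $\mathcal{AN}_{\mathfrak c}\cup\{0\}$. Since $\card(\beta\mathbb N)=2^{\mathfrak c}$, this exhibits a $2^{\mathfrak c}$-generated free subalgebra all of whose nonzero elements lie in $\mathcal{AN}_{\mathfrak c}$, which is exactly strong $2^{\mathfrak c}$-algebrability.

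I expect essentially no genuine obstacle to remain once Lemma~\ref{lem:1} is invoked: its conclusion already packages both the algebraic independence of the $2^{\mathfrak c}$ generators and the membership of the generated algebra in $\mathcal{AN}_{\mathfrak c}\cup\{0\}$. The only nontrivial point of the argument is the construction of null $\mathfrak c$-sequences of sets whose characteristic functions converge everywhere to the prescribed Bernstein characteristic functions, and this succeeds precisely because $\non(\N)=\mathfrak c$ forces every subset of size $<\mathfrak c$ to be null. (Compare Theorem~\ref{thm:4}, where the very same initial-segment device under $\non(\N)=\mathfrak c$ was used to obtain positive $2^{\mathfrak c}$-coneability of $\mathcal{NM}_{\mathfrak c}$, $\mathcal{NF}_{\mathfrak c}$ and $\mathcal{ND}_{\mathfrak c}$.)
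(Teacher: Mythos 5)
Your proposal is correct and follows essentially the same route as the paper: the paper's proof also enumerates each Bernstein set $V_{\psi(\mathcal U)}^\R$ without repetition, takes the initial segments $\sigma_\beta^{\mathcal U}=\{x_{\mathcal U,\gamma}\colon\gamma\leq\beta\}$, uses $\card(\beta+1)<\mathfrak c=\non(\N)$ to see these are null, notes the everywhere pointwise convergence of $(\chi_{\sigma_\beta^{\mathcal U}})_{\beta<\mathfrak c}$ to $\chi_{V_{\psi(\mathcal U)}^\R}$, and concludes by Lemma~\ref{lem:1}.
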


\begin{proof}
	Let $\psi\colon \beta \mathbb N\to 2^{\mathfrak c}$ be a bijection.
	For each $\mathcal U\in \beta \mathbb N$, let $\{x_{\mathcal U,\beta} \colon \beta<\mathfrak c \}$ be an enumeration, without repetition, of $V_{\psi(\mathcal U)}^\R$ (where the sets $V_{\psi(\mathcal U)}^\R$ are defined in \eqref{equ:1} at the beginning of Section~\ref{sec:2}); and consider the $\mathfrak c$-sequence of functions $(f_\beta^{\mathcal U})_{\beta< \mathfrak c}$ defined as follows: for each $\beta<\mathfrak c$,
		$$
		f_\beta^{\mathcal U}:=(g_{\mathcal U} \circ \varphi) \cdot \chi_{\{x_{\mathcal U,\gamma} \colon \gamma\leq \beta \}},
		$$
	where the functions $g_{\mathcal U}$ satisfy Theorem~\ref{CiRoSe}, and $\varphi$ is defined in the paragraph before Lemma~\ref{lem:1}.
	Since $\card(\{x_{\mathcal U,\gamma} \colon \gamma\leq \beta \})=\card(\beta+1)<\mathfrak c$ and $\non(\mathcal N)=\mathfrak c$, it is obvious that $\{x_{\mathcal U,\gamma} \colon \gamma\leq \beta \}$ is a null set for every $\mathcal U\in \beta \mathbb N$.
	
	Since $\sigma_\beta^{\mathcal U}:=\{x_{\mathcal U,\gamma} \colon \gamma\leq \beta \}\in \N$ for every $\beta<\mathfrak c$ and $\mathcal U\in \beta \mathbb N$, and $(\chi_{\sigma_\beta^{\mathcal U}})_{\beta<\mathfrak c}$ converges pointwise everywhere to $\chi_{V_{\psi(\mathcal U)}^\R}$ for every $\mathcal U\in \beta \mathbb N$, we have the desired result by Lemma~\ref{lem:1}.
\end{proof}

\begin{theorem}\label{thm:6_1_2}
	If $\add(\N)=\cov(\mathcal N)$, then $\mathcal{AN}_{\add(\N)}$ in $\left(\mathbb R^\R \right)^{\add(\N)}$ is strongly 
	$2^{\mathfrak c}$-algebrable.
\end{theorem}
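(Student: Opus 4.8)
The plan is to mirror the proof of Theorem~\ref{thm:6_1_1}, replacing the point-by-point enumeration (available there because $\non(\N)=\mathfrak c$ forces every set of size $<\mathfrak c$ to be null) with an enumeration by null sets (available here because $\add(\N)=\cov(\N)$), exactly as Theorems~\ref{thm:4_1} and~\ref{thm:8_2} adapt Theorems~\ref{thm:4} and~\ref{thm:8_1}. The goal is to produce, for each $\mathcal U\in\beta\mathbb N$, an increasing $\add(\N)$-sequence of null subsets of $V_{\psi(\mathcal U)}^\R$ whose characteristic functions converge pointwise everywhere to $\chi_{V_{\psi(\mathcal U)}^\R}$, and then to invoke Lemma~\ref{lem:1}.

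First I would fix a bijection $\psi\colon\beta\mathbb N\to 2^{\mathfrak c}$ and recall the Bernstein sets $V_{\psi(\mathcal U)}^\R$ from \eqref{equ:1}, the functions $g_{\mathcal U}$ of Theorem~\ref{CiRoSe}, and the map $\varphi$ defined in the paragraph before Lemma~\ref{lem:1}. The key construction is the following: since $\cov(\N)$ null sets suffice to cover the line, for each $\mathcal U\in\beta\mathbb N$ I would take a family $\{N_\beta^{\mathcal U}\colon\beta<\cov(\N)\}$ of null sets covering $V_{\psi(\mathcal U)}^\R$, and replace each $N_\beta^{\mathcal U}$ by $N_\beta^{\mathcal U}\cap V_{\psi(\mathcal U)}^\R$ so that $N_\beta^{\mathcal U}\subseteq V_{\psi(\mathcal U)}^\R$ (still null, by completeness of $\lambda$) and $\bigcup_{\beta<\cov(\N)}N_\beta^{\mathcal U}=V_{\psi(\mathcal U)}^\R$. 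Setting $\sigma_\beta^{\mathcal U}:=\bigcup_{\gamma\le\beta}N_\gamma^{\mathcal U}$, each $\sigma_\beta^{\mathcal U}$ is a union of $\card(\beta+1)<\add(\N)=\cov(\N)$ null sets, hence null by the definition of $\add(\N)$.

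Next I would define, for each $\mathcal U\in\beta\mathbb N$, the $\add(\N)$-sequence $(f_\beta^{\mathcal U})_{\beta<\add(\N)}$ by
	$$
	f_\beta^{\mathcal U}:=(g_{\mathcal U}\circ\varphi)\cdot\chi_{\sigma_\beta^{\mathcal U}},
	$$
and then verify the two hypotheses of Lemma~\ref{lem:1}: that each $\sigma_\beta^{\mathcal U}\in\N$ (checked above), and that $(\chi_{\sigma_\beta^{\mathcal U}})_{\beta<\add(\N)}$ converges pointwise everywhere to $\chi_{V_{\psi(\mathcal U)}^\R}$. The latter holds because the sequence $(\sigma_\beta^{\mathcal U})_\beta$ is increasing with union $V_{\psi(\mathcal U)}^\R$: every $x\in V_{\psi(\mathcal U)}^\R$ lies in some $N_{\beta_0}^{\mathcal U}$, hence in $\sigma_\beta^{\mathcal U}$ for all $\beta\ge\beta_0$, while every $x\notin V_{\psi(\mathcal U)}^\R$ lies in no $\sigma_\beta^{\mathcal U}$ since $\sigma_\beta^{\mathcal U}\subseteq V_{\psi(\mathcal U)}^\R$. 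With both hypotheses established, Lemma~\ref{lem:1} yields that the family $\{(f_\beta^{\mathcal U})_{\beta<\add(\N)}\colon\mathcal U\in\beta\mathbb N\}$ is algebraically independent and generates a free algebra contained in $\mathcal{AN}_{\add(\N)}\cup\{0\}$; since $\card(\beta\mathbb N)=2^{\mathfrak c}$, this is precisely strong $2^{\mathfrak c}$-algebrability.

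The argument is essentially routine once Lemma~\ref{lem:1} is in hand; the only point requiring care is the cover construction and the verification that the partial unions $\sigma_\beta^{\mathcal U}$ remain null, which is exactly where the hypothesis $\add(\N)=\cov(\N)$ enters (it simultaneously guarantees that $V_{\psi(\mathcal U)}^\R$ admits a cover by $\add(\N)$ null sets and that unions of fewer than $\add(\N)$ of them stay null). I do not expect a genuinely new obstacle beyond transcribing the $\non(\N)=\mathfrak c$ argument of Theorem~\ref{thm:6_1_1} into the $\add(\N)=\cov(\N)$ setting.
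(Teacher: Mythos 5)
Your proposal is correct and follows essentially the same route as the paper: take a $\cov(\N)$-indexed cover by null sets, intersect with $V_{\psi(\mathcal U)}^\R$, form the increasing partial unions $\sigma_\beta^{\mathcal U}$ (null because $\card(\beta+1)<\add(\N)=\cov(\N)$), and feed the resulting sequences $(g_{\mathcal U}\circ\varphi)\cdot\chi_{\sigma_\beta^{\mathcal U}}$ into Lemma~\ref{lem:1}. The only cosmetic difference is that the paper fixes a single cover of all of $\R$ and intersects it with each $V_{\psi(\mathcal U)}^\R$, whereas you allow a separate cover per $\mathcal U$; this changes nothing.
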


\begin{proof}
	Let $\psi\colon \beta \mathbb N\to 2^{\mathfrak c}$ be a bijection.
	Also let $\{N_\beta \colon \beta<\cov(\N) \}$ be a collection of null sets such 
	that $\R=\bigcup_{\beta<\cov(\N)} N_\beta$.
	For every $\mathcal U\in \beta \mathbb N$, let us define the following $\add(\N)$-sequences: for each $\beta<\add(\N)$,
		$$
		f_\beta^{\mathcal U}:=(g_{\mathcal U}\circ \varphi) \cdot \chi_{\bigcup_{\gamma\leq \beta} \left(N_\beta\cap V_{\psi(\mathcal U)}^\R\right)},
		$$
	where the sets $V_{\psi(\mathcal U)}^\R$ were defined in \eqref{equ:1} at the beginning of Section~\ref{sec:2}, the functions $g_{\mathcal U}$ satisfy Theorem~\ref{CiRoSe}, and $\varphi$ was defined in the paragraph prior to Lemma~\ref{lem:1}.
	Notice that the unions $\bigcup_{\gamma\leq \beta} \left(N_\beta\cap V_{\psi(\mathcal U)}^\R\right) \subset \bigcup_{\gamma\leq \beta} N_\beta$ are well defined and also null sets for every $\beta<\add(\N)$ and $\mathcal U\in \beta \mathbb N$ since $\text{card}(\beta+1)<\add(\N)=\cov(\N)$.

	By taking $\sigma_\beta^\mathcal U:=\bigcup_{\gamma\leq \beta} \left(N_\beta\cap V_{\psi(\mathcal U)}^\R\right) \in \mathcal N$ for every $\beta<\add(\N)$ and $\mathcal U\in \beta \mathbb N$, we have that the $\add(\N)$-sequence $\left(\chi_{\sigma_\beta^{\mathcal U}}\right)_{\beta<\add(\N)}$ converges pointwise everywhere to $\chi_{V_{\psi(\mathcal U)}^\R}$.
	Hence, we have the desired result by Lemma~\ref{lem:1}.
\end{proof}

\section{Final remarks}\label{sec:4}

In Section~\ref{sec:2} we were interested in finding any directed set (that is not a cardinal number) in which there was an algebraic structure such as a cone or an algebra.
In any case, not only we tried to find the most optimal structure (for instance, in the case of algebras, we were trying to find algebraically independent nets) but we were also studying the largest possible dimension of these structures.
Moreover, notice that we tried to find the smallest possible cardinality of a directed set such that the family of nets in this paper contained a certain algebraic structure.
Keeping the above in mind it is natural to ask the following questions:
	\begin{itemize}
		\item[(Q1)] Let $1\leq \mu\leq 2^{\mathfrak c}$ be a cardinal number. 
		For which directed sets $\mathcal A_\mu$ does there exist a positive cone of dimension $\mu$ contained in $\mathcal{NM}_{\mathcal A}\cup \{0\}$, $\mathcal{NF}_{\mathcal A}\cup \{0\}$ and $\mathcal{ND}_{\mathcal A}\cup \{0\}$ in $\left(\mathbb R^{[0,1]} \right)^{\mathcal A}$?
		What is the smallest possible cardinality of $\mathcal A_\mu$?
		
		\item[(Q2)] Let $1\leq \mu\leq 2^{\mathfrak c}$ be a cardinal number. 
		For which directed sets $\mathcal A_\mu$ does there exist a cone of dimension $\mu$ contained in $\mathcal{ND}_{\mathcal A}\cup \{0\}$ in $\left(\mathbb R^{[0,1]} \right)^{\mathcal A}$?
		What is the smallest possible cardinality of $\mathcal A_\mu$?
		
		\item[(Q3)] Let $1\leq \mu\leq 2^{\mathfrak c}$ be a cardinal number. 
		For which directed sets $\mathcal A_\mu$ are the families $\mathcal{AN}_{\mathcal A}$ (in $\left(\R^\R \right)^{\mathcal A}$), $\mathcal{ND}_{\mathcal A}$ and $\mathcal{ANM}_{\mathcal A} \setminus \mathcal{ND}_{\mathcal A}$ (in $\left(\mathbb R^{[0,1]} \right)^{\mathcal A}$) strongly $\mu$-algebrable?
		What is the smallest possible cardinality of $\mathcal A_\mu$?
	\end{itemize}

In the case of $\kappa$-sequences (Section~\ref{sec:3}) we have also considered directed sets but with the additional restriction that they were (regular) cardinal numbers.
In some cases we have noticed that it was impossible to find any kind of algebraic structure (this is the case of $\mathcal{AN}_{\omega_1}$ assuming that $\add(\N)>\omega_1$).
And also, unlike the general case studied in Section~\ref{sec:2}, we had to consider in some cases additional set-theoretical assumptions.
In view of these results we have the following questions regarding $\kappa$-sequences:
	\begin{itemize}
		\item[(Q4)] Let $1\leq \mu\leq 2^{\mathfrak c}$ be a cardinal number. 
		Is it possible to find in ZFC a (regular) cardinal number $\kappa$ such that $\mathcal{NM}_\kappa$, $\mathcal{NF}_\kappa$ and $\mathcal{ND}_\kappa$ in $\left(\R^{[0,1]} \right)^\kappa$ are positively $\mu$-coneable? Are additional axioms needed? 
		In any case, what is the smallest possible $\kappa$?
		
		\item[(Q5)] Let $1\leq \mu\leq 2^{\mathfrak c}$ be a cardinal number. 
		Is it possible to find in ZFC a (regular) cardinal number $\kappa$ such that $\mathcal{ANM}_\kappa \setminus \mathcal{ND}_\kappa$ in $\left(\R^{[0,1]} \right)^\kappa$ is $\mu$-coneable? 
		Are additional axioms needed? 
		In any case, what is the smallest possible $\kappa$?
		
		\item[(Q6)] Let $1\leq \mu\leq 2^{\mathfrak c}$ be a cardinal number. 
		Is it possible to find in ZFC a (regular) cardinal number $\kappa$ such that $\mathcal{AN}_\kappa$ (in $\left(\R^\R \right)^\kappa$) and $\mathcal{ND}_\kappa$ ($\left(\R^{[0,1]} \right)^\kappa$) is strongly $\mu$-algebrable? 
		Are additional axioms needed? 
		In any case, what is the smallest possible $\kappa$?
	\end{itemize}

\begin{bibdiv}
	\begin{biblist}
		
		\bib{AdFaRoSe}{article}{
			author={Adell, J. A.},
			author={Falc\'{o}, J.},
			author={Rodr\'{\i}guez-Vidanes, D. L.},
			author={Seoane-Sep\'{u}lveda, J. B.},
			title={Discontinuous, although ``highly'' differentiable, real functions
				and algebraic genericity},
			journal={J. Math. Anal. Appl.},
			volume={502},
			date={2021},
			number={2},
			pages={Paper No. 125264, 21},
			doi={10.1016/j.jmaa.2021.125264},
		}
		
		\bib{AiPeGaSe}{article}{
			author={Aizpuru, A.},
			author={P\'{e}rez-Eslava, C.},
			author={Garc\'{\i}a-Pacheco, F. J.},
			author={Seoane-Sep\'{u}lveda, J. B.},
			title={Lineability and coneability of discontinuous functions on $\Bbb
				R$},
			journal={Publ. Math. Debrecen},
			volume={72},
			date={2008},
			number={1-2},
			pages={129--139},
		}
		
		\bib{AlBo}{book}{
			author={Aliprantis, Charalambos D.},
			author={Border, Kim C.},
			title={Infinite dimensional analysis},
			edition={3},
			note={A hitchhiker's guide},
			publisher={Springer, Berlin},
			date={2006},
			pages={xxii+703},
		}
		
		\bib{ArBeMuPrSe}{article}{
			author={Ara\'{u}jo, G.},
			author={Bernal-Gonz\'{a}lez, L.},
			author={Mu\~{n}oz-Fern\'{a}ndez, G. A.},
			author={Prado-Bassas, J. A.},
			author={Seoane-Sep\'{u}lveda, J. B.},
			title={Lineability in sequence and function spaces},
			journal={Studia Math.},
			volume={237},
			date={2017},
			number={2},
			pages={119--136},
			doi={10.4064/sm8358-10-2016},
		}
		
		\bib{ArBePeSe}{book}{
			author={Aron, R.M.},
			author={Bernal Gonz\'{a}lez, L.},
			author={Pellegrino, D.M.},
			author={Seoane Sep\'{u}lveda, J.B.},
			title={Lineability: the search for linearity in mathematics},
			series={Monographs and Research Notes in Mathematics},
			publisher={CRC Press, Boca Raton, FL},
			date={2016},
			pages={xix+308},
			isbn={978-1-4822-9909-0},
		}
	
		\bib{ArCoPeSe}{article}{
			author={Aron, R. M.},
			author={Conejero, J. A.},
			author={Peris, A.},
			author={Seoane-Sep\'{u}lveda, J. B.},
			title={Powers of hypercyclic functions for some classical hypercyclic
				operators},
			journal={Integral Equations Operator Theory},
			volume={58},
			date={2007},
			number={4},
			pages={591--596},
			doi={10.1007/s00020-007-1490-4},
		}
		
		\bib{ArGuSe}{article}{
			author={Aron, R. M.},
			author={Gurariy, V. I.},
			author={Seoane-Sep\'{u}lveda, J. B.},
			title={Lineability and spaceability of sets of functions on $\mathbb R$},
			journal={Proc. Amer. Math. Soc.},
			volume={133},
			date={2005},
			number={3},
			pages={795--803},
		}
		
		\bib{BaBiGl}{article}{
			author={Bartoszewicz, Artur},
			author={Bienias, Marek},
			author={G\l \c{a}b, Szymon},
			title={Independent Bernstein sets and algebraic constructions},
			journal={J. Math. Anal. Appl.},
			volume={393},
			date={2012},
			number={1},
			pages={138--143},
		}
		
		\bib{BaGlPeSe}{article}{
			author={Bartoszewicz, Artur},
			author={G\l \c{a}b, Szymon},
			author={Pellegrino, Daniel},
			author={Seoane-Sep\'{u}lveda, Juan B.},
			title={Algebrability, non-linear properties, and special functions},
			journal={Proc. Amer. Math. Soc.},
			volume={141},
			date={2013},
			number={10},
			pages={3391--3402},
		}
	
		\bib{BaJu}{book}{
			author={Bartoszy\'{n}ski, Tomek},
			author={Judah, Haim},
			title={Set theory},
			note={On the structure of the real line},
			publisher={A K Peters, Ltd., Wellesley, MA},
			date={1995},
			pages={xii+546},
		}
		
		\bib{BeDe}{article}{
			author={Beckmann, Ralf},
			author={Deitmar, Anton},
			title={Two applications of nets},
			journal={Ann. Funct. Anal.},
			volume={6},
			date={2015},
			number={3},
			pages={176--190},
		}
		
		\bib{Be}{book}{
			author={Berberian, Sterling K.},
			title={Measure and integration},
			note={Reprint of the 1965 original},
			publisher={AMS Chelsea Publishing, Providence, RI},
			date={2011},
			pages={xviii+312},
		}
		
		%		
		%		\bib{ar2}{article}{
		%			author={Aron, R.M.},
		%			author={P\'{e}rez-Garc\'{\i }a, D.},
		%			author={Seoane-Sep\'{u}lveda, J.B.},
		%			title={Algebrability of the set of non-convergent Fourier series},
		%			journal={Studia Math.},
		%			volume={175},
		%			date={2006},
		%			number={1},
		%			pages={83--90},
		%		}
		%		
		%		\bib{bbf}{article}{
		%			AUTHOR = {Balcerzak, Marek},
		%			AUTHOR = {Bartoszewicz, Artur},
		%			AUTHOR = {Filipczak, Ma\l gorzata},
		%			TITLE = {Nonseparable spaceability and strong algebrability of sets of
		%				continuous singular functions},
		%			JOURNAL = {J. Math. Anal. Appl.},
		%			VOLUME = {407},
		%			YEAR = {2013},
		%			NUMBER = {2},
		%			PAGES = {263--269},
		%		}
		
		\bib{BaBiFiGl}{article}{
			author={Bartoszewicz, Artur},
			author={Bienias, Marek},
			author={Filipczak, Ma\l gorzata},
			author={G\l \c{a}b, Szymon},
			title={Strong $\germ{c}$-algebrability of strong Sierpi\'{n}ski-Zygmund,
				smooth nowhere analytic and other sets of functions},
			journal={J. Math. Anal. Appl.},
			volume={412},
			date={2014},
			number={2},
			pages={620--630},
			doi={10.1016/j.jmaa.2013.10.075},
		}

		\bib{BeCa}{article}{
			author={Bernal-Gonz\'{a}lez, Luis},
			author={del Carmen Calder\'{o}n-Moreno, Mar\'{\i}a},
			title={Anti-Fubini and pseudo-Fubini functions},
			journal={Rev. R. Acad. Cienc. Exactas F\'{\i}s. Nat. Ser. A Mat. RACSAM},
			volume={115},
			date={2021},
			number={3},
			pages={Paper No. 127, 16},
			doi={10.1007/s13398-021-01067-7},
		}
	
		\bib{BeCaMuSe}{article}{
			author={Bernal-Gonz\'{a}lez, L.},
			author={Cabana-M\'{e}ndez, H. J.},
			author={Mu\~{n}oz-Fern\'{a}ndez, G. A.},
			author={Seoane-Sep\'{u}lveda, J. B.},
			title={On the dimension of subspaces of continuous functions attaining
				their maximum finitely many times},
			journal={Trans. Amer. Math. Soc.},
			volume={373},
			date={2020},
			number={5},
			pages={3063--3083},
			doi={10.1090/tran/8054},
		}
	
		\bib{BeCaSe}{article}{
			author={Bernal-Gonz\'{a}lez, Luis},
			author={del Carmen Calder\'{o}n-Moreno, Mar\'{\i}a},
			author={Seoane-Sep\'{u}lveda, Juan Benigno},
			title={Infinite dimensional holomorphic non-extendability and algebraic
				genericity},
			journal={Linear Algebra Appl.},
			volume={513},
			date={2017},
			pages={149--159},
			doi={10.1016/j.laa.2016.10.008},
		}
	
		\bib{BeFeMaSe}{article}{
			author={Bernal-Gonz\'{a}lez, L.},
			author={Fern\'{a}ndez-S\'{a}nchez, J.},
			author={Mart\'{\i}nez-G\'{o}mez, M. E.},
			author={Seoane-Sep\'{u}lveda, J. B.},
			title={Banach spaces and Banach lattices of singular functions},
			journal={Studia Math.},
			volume={260},
			date={2021},
			number={2},
			pages={167--193},
			doi={10.4064/sm200419-7-9},
		}
			
		\bib{bo}{article}{
			AUTHOR = {Bernal-Gonz\'{a}lez, Luis},
			AUTHOR = {Ord\'{o}\~{n}ez Cabrera, Manuel},
			TITLE = {Lineability criteria, with applications},
			JOURNAL = {J. Funct. Anal.},
			VOLUME = {266},
			YEAR = {2014},
			NUMBER = {6},
			PAGES = {3997--4025},
		}
		%		
		%		\bib{survey}{article}{
		%			author={Bernal-Gonz\'{a}lez, L.},
		%			author={Pellegrino, D.},
		%			author={Seoane-Sep\'{u}lveda, J.B.},
		%			title={Linear subsets of nonlinear sets in topological vector spaces},
		%			journal={Bull. Amer. Math. Soc. (N.S.)},
		%			volume={51},
		%			date={2014},
		%			number={1},
		%			pages={71--130},
		%		}	
		%		
		%		\bib{bns}{article}{
		%			AUTHOR = {Biehler, N.},
		%			AUTHOR = {Nestoridis, V.},
		%			AUTHOR = {Stavrianidi, A.},
		%			TITLE = {Algebraic genericity of frequently universal harmonic
		%				functions on trees},
		%			JOURNAL = {J. Math. Anal. Appl.},
		%			VOLUME = {489},
		%			YEAR = {2020},
		%			NUMBER = {1},
		%			PAGES = {124132, 11},
		%		}
		
		\bib{BoCaFaPeSe}{article}{
			author={Botelho, Geraldo},
			author={Cariello, Daniel},
			author={F\'{a}varo, Vin\'{\i}cius V.},
			author={Pellegrino, Daniel},
			author={Seoane-Sep\'{u}lveda, Juan B.},
			title={Distinguished subspaces of $L_p$ of maximal dimension},
			journal={Studia Math.},
			volume={215},
			date={2013},
			number={3},
			pages={261--280},
			doi={10.4064/sm215-3-4},
		}
	
	\bib{CaFeFiSe}{article}{
		author={Carmona Tapia, Jos\'{e}},
		author={Fern\'{a}ndez-S\'{a}nchez, Juan},
		author={Fi\~{n}ana, Ruben},
		author={Seoane-Sep\'{u}lveda, Juan Benigno},
		title={Modes of convergence in nets, counterexamples, and lineability},
		journal={Bull. Belg. Math. Soc. Simon Stevin},
		volume={29},
		date={2022},
		number={5},
		pages={663--682},
		doi={10.36045/j.bbms.220801},
	}
		
		%		
		%		\bib{cgp}{article}{
		%			AUTHOR = {Calder\'{o}n-Moreno, M. C.},
		%			AUTHOR = {Gerlach-Mena, P. J.},
		%			AUTHOR = {Prado-Bassas, J. A.},
		%			TITLE = {Lineability and modes of convergence},
		%			JOURNAL = {Rev. R. Acad. Cienc. Exactas F\'{\i}s. Nat. Ser. A Mat. RACSAM},
		%			VOLUME = {114},
		%			YEAR = {2020},
		%			NUMBER = {1},
		%			PAGES = {Paper No. 18, 12},
		%		}
		
		\bib{Ci}{book}{
			author={Ciesielski, Krzysztof},
			title={Set theory for the working mathematician},
			series={London Mathematical Society Student Texts},
			volume={39},
			publisher={Cambridge University Press, Cambridge},
			date={1997},
			pages={xii+236},
			doi={10.1017/CBO9781139173131},
		}	
		
		\bib{CiRoSe}{article}{
			author={Ciesielski, K. C.},
			author={Rodr\'{\i}guez-Vidanes, D. L.},
			author={Seoane-Sep\'{u}lveda, J. B.},
			title={Algebras of measurable extendable functions of maximal
				cardinality},
			journal={Linear Algebra Appl.},
			volume={565},
			date={2019},
			pages={258--266},
			doi={10.1016/j.laa.2018.12.017},
		}
			
		\bib{CiSe}{article}{
			author={Ciesielski, Krzysztof C.},
			author={Seoane-Sep\'{u}lveda, Juan B.},
			title={Differentiability versus continuity: restriction and extension theorems and monstrous examples},
			journal={Bull. Amer. Math. Soc. (N.S.)},
			volume={56},
			date={2019},
			number={2},
			pages={211--260},
		}

		\bib{Di}{article}{
			author={Dindo\v{s}, Martin},
			title={Limits of transfinite convergent sequences of derivatives},
			journal={Real Anal. Exchange},
			volume={22},
			date={1996/97},
			number={1},
			pages={338--345},
		}

		%		
		%		\bib{egs}{article}{
		%			AUTHOR = {Enflo, Per H.},
		%			AUTHOR = {Gurariy, Vladimir I.},
		%			AUTHOR = {Seoane-Sep\'{u}lveda,	Juan B.},
		%			TITLE = {Some results and open questions on spaceability in function
		%				spaces},
		%			JOURNAL = {Trans. Amer. Math. Soc.},
		%			VOLUME = {366},
		%			YEAR = {2014},
		%			NUMBER = {2},
		%			PAGES = {611--625},
		%		}
		
		\bib{En}{book}{
			author={Engelking, Ryszard},
			title={General topology},
			series={Sigma Series in Pure Mathematics},
			volume={6},
			edition={2},
			note={Translated from the Polish by the author},
			publisher={Heldermann Verlag, Berlin},
			date={1989},
			pages={viii+529},
		}
		
		\bib{FeMaRoSe}{article}{
			author={Fern\'{a}ndez-S\'{a}nchez, J.},
			author={Maghsoudi, S.},
			author={Rodr\'{\i}guez-Vidanes, D. L.},
			author={Seoane-Sep\'{u}lveda, J. B.},
			title={Classical vs. non-Archimedean analysis: an approach via algebraic
				genericity},
			journal={Rev. R. Acad. Cienc. Exactas F\'{\i}s. Nat. Ser. A Mat. RACSAM},
			volume={116},
			date={2022},
			number={2},
			pages={Paper No. 72, 27},
			doi={10.1007/s13398-022-01209-5},
		}
	
		\bib{FeMuRoSe}{article}{
			author={Fern\'{a}ndez-S\'{a}nchez, Juan},
			author={Mu\~{n}oz-Fern\'{a}ndez, Gustavo A.},
			author={Rodr\'{\i}guez-Vidanes, Daniel L.},
			author={Seoane-Sep\'{u}lveda, Juan B.},
			title={Obtaining algebrability in subsets of real functions},
			journal={Publ. Math. Debrecen},
			volume={96},
			date={2020},
			number={1-2},
			pages={231--244},
			doi={10.5486/pmd.2020.8660},
		}
	
		\bib{FeGeTr}{article}{
			author={Fernandez–Sánchez, Juan},
			author={Seoane–Sepulveda, Juan B.},
			author={Trutschnig, Wolfgang},
			title={Lineability, algebrability, and sequences of random variables},
			journal={Math. Nachr.},
			volume={preprint},
		}
	
		%		\bib{g7}{article}{
		%			AUTHOR = {Garc\'{\i}a-Pacheco, F. J.},
		%			AUTHOR = {Palmberg, N.},
		%			AUTHOR = {Seoane-Sep\'{u}lveda, J. B.},
		%			TITLE = {Lineability and algebrability of pathological phenomena in analysis},
		%			JOURNAL = {J. Math. Anal. Appl.},
		%			VOLUME = {326},
		%			YEAR = {2007},
		%			NUMBER = {2},
		%			PAGES = {929--939},
		%		}
		
		\bib{FiKa}{article}{
			author={Fichtenholz, G.},
			author={Kantorovich, L.},
			title={Sur les op\'{e}rations dans l'espace des functions born\'{e}es},
			journal={Studia Math.},
			volume={5},
			date={1934},
			number={},
			pages={69-98},
		}
		
		\bib{Fo}{book}{
			author={Folland, Gerald B.},
			title={Real analysis},
			series={Pure and Applied Mathematics (New York)},
			edition={2},
			note={Modern techniques and their applications;
				A Wiley-Interscience Publication},
			publisher={John Wiley \& Sons, Inc., New York},
			date={1999},
			pages={xvi+386},
		}

		\bib{GaRaSe}{article}{
			author={Garc\'{\i}a-Pacheco, F. J.},
			author={Rambla-Barreno, F.},
			author={Seoane-Sep\'{u}lveda, J. B.},
			title={${\bf Q}$-linear functions, functions with dense graph, and
				everywhere surjectivity},
			journal={Math. Scand.},
			volume={102},
			date={2008},
			number={1},
			pages={156--160},
		}
	
%		\bib{Go}{article}{
%			author={Gorman, William J., III},
%			title={The homeomorphic transformation of $c$-sets into $d$-sets},
%			journal={Proc. Amer. Math. Soc.},
%			volume={17},
%			date={1966},
%			pages={825--830},
%			doi={10.2307/2036261},
%		}
			
		\bib{Gr}{article}{
			author={Grande, Zbigniew},
			title={On almost continuous additive functions},
			journal={Math. Slovaca},
			volume={46},
			date={1996},
			number={2-3},
			pages={203--211},
		}
		
		\bib{GuQu}{article}{
			author={Gurariy, V.I.},
			author={Quarta, Lucas},
			title={On lineability of sets of continuous functions},
			journal={J. Math. Anal. Appl.},
			volume={294},
			date={2004},
			number={1},
			pages={62--72},
			doi={10.1016/j.jmaa.2004.01.036},
		}
	
		%		
		%		\bib{gur1}{article}{
		%			author={Gurari\u {\i }, V.I.},
		%			title={Subspaces and bases in spaces of continuous functions},
		%			language={Russian},
		%			journal={Dokl. Akad. Nauk SSSR},
		%			volume={167},
		%			date={1966},
		%			pages={971--973},
		%		}
		%		
		%		\bib{jms}{article}{
		%			AUTHOR = {Jim\'{e}nez-\'{\i}guez, P.},
		%			AUTHOR = {Mu\~{n}oz-Fern\'{a}ndez, G. A.},
		%			AUTHOR = {Seoane-Sep\'{u}lveda, J. B.},
		%			TITLE = {Non-{L}ipschitz functions with bounded gradient and related
		%				problems},
		%			JOURNAL = {Linear Algebra Appl.},
		%			VOLUME = {437},
		%			YEAR = {2012},
		%			NUMBER = {4},
		%			PAGES = {1174--1181},
		%		}
		%		
		%		\bib{jms1}{article}{
		%			author={Khodabendehlou, J.},
		%			author={Maghsoudi, S.},
		%			author={Seoane-Sep{\'u}lveda, J.B.},
		%			title={Algebraic genericity and summability within the non-Archimedean setting},
		%			journal={Rev. R. Acad. Cienc. Exactas F\'{\i}s. Nat. Ser. A Mat. RACSAM},
		%			volume={115},
		%			number={21},
		%			date={2021},
		%		}
		%		
		%		
		%		\bib{jms2}{article}{
		%			author={Khodabendehlou, J.},
		%			author={Maghsoudi, S.},
		%			author={Seoane-Sep{\'u}lveda, J.B.},
		%			title={Lineability and algebrability within $p$-adic function spaces},
		%			journal={Bull. Belg. Math. Soc. Simon Stevin},
		%			volume={27},
		%			pages={711–729},
		%			date={2020},
		%		}
		%		
		%		\bib{jms3}{article}{
		%			author={Khodabendehlou, J.},
		%			author={Maghsoudi, S.},
		%			author={Seoane-Sep{\'u}lveda, J.B.},
		%			title={Lineability, continuity, and antiderivatives in the
		%				non-Archimedean setting},
		%			journal={Canad. Math. Bull.},
		%			volume={64},
		%			date={2021},
		%			number={3},
		%			pages={638--650},
		%		}
			
		\bib{Ha}{article}{
			author={Hausdorff, F.},
			title={Uber zwei Satze von G. Fichtenholz und L. Kantorovich},
			language={German},
			journal={Studia Math.},
			volume={6},
			date={1936},
			pages={18--19},
		}
	
		\bib{Ke}{book}{
			author={Kechris, Alexander S.},
			title={Classical descriptive set theory},
			series={Graduate Texts in Mathematics},
			volume={156},
			publisher={Springer-Verlag, New York},
			date={1995},
			pages={xviii+402},
			doi={10.1007/978-1-4612-4190-4},
		}
		
		\bib{Ko}{article}{
			author={Komj\'{a}th, P\'{e}ter},
			title={Limits of transfinite sequences of Baire-$2$ functions},
			journal={Real Anal. Exchange},
			volume={24},
			date={1998/99},
			number={2},
			pages={497--502},
		}
	
		\bib{Ku}{book}{
			author={Kunen, Kenneth},
			title={Set theory},
			series={Studies in Logic and the Foundations of Mathematics},
			volume={102},
			note={An introduction to independence proofs},
			publisher={North-Holland Publishing Co., Amsterdam-New York},
			date={1980},
			pages={xvi+313},
		}
		
		\bib{LeMi}{article}{
			author={Levine, B.},
			author={Milman, D.},
			title={On linear sets in space $C$ consisting of functions of bounded variation},
			language={Russian, with English summary},
			journal={Comm. Inst. Sci. Math. M\'{e}c. Univ. Kharkoff [Zapiski Inst. Mat. Mech.] (4)},
			volume={16},
			date={1940},
			pages={102--105},
		}
	
		\bib{Li}{article}{
			author={Lipi\'{n}ski, J. S.},
			title={On transfinite sequences of approximately continuous functions},
			language={English, with Russian summary},
			journal={Bull. Acad. Polon. Sci. S\'{e}r. Sci. Math. Astronom. Phys.},
			volume={21},
			date={1973},
			pages={817--821},
		}
%	\color{red}
%		
%		\bib{Ma}{article}{
%			author={Malchair, H.},
%			title={Sur les suites et séries transfinies},
%			journal={Bull. Soc. Roy. Sci. de Liège},
%			volume={1},
%			date={1932},
%			pages={47--49},
%		}
%		\color{black}
		
		\bib{Ma3}{article}{
			author={Majumdar, S.},
			title={Extension of the continuity theorems of Lebesgue integration},
			date={2022},
			journal={Preprint},
		}
	
		\bib{Ma2}{article}{
			author={Marsden, J. E.},
			title={Countable and net convergence},
			journal={Amer. Math. Monthly},
			volume={75},
			date={1968},
			pages={397--398},
			doi={10.2307/2313437},
		}
		
		\bib{Mo}{article}{
			author={Mollerup, J},
			title={Die Definition des Mengenbegriffs},
			journal={Math. Ann.},
			volume={64},
			date={1907},
			pages={231--238},
			doi={10.1007/BF01449894},
		}
		
		%		\bib{ms}{article}{
		%			AUTHOR = {Moothathu, T. K. Subrahmonian},
		%			TITLE = {Lineability in the sets of {B}aire and continuous real
		%				functions},
		%			JOURNAL = {Topology Appl.},
		%			VOLUME = {235},
		%			YEAR = {2018},
		%			PAGES = {83--91},
		%		}		
		%	
		
		\bib{MuPaPu}{article}{
			author={Mu\~{n}oz-Fern\'{a}ndez, G. A.},
			author={Palmberg, N.},
			author={Puglisi, D.},
			author={Seoane-Sep\'{u}lveda, J. B.},
			title={Lineability in subsets of measure and function spaces},
			journal={Linear Algebra Appl.},
			volume={428},
			date={2008},
			number={11-12},
			pages={2805--2812},
			doi={10.1016/j.laa.2008.01.008},
		}
		
		\bib{Na1}{article}{
			AUTHOR = {Natkaniec, Tomasz},
			TITLE = {$\omega_1$-limits of real functions},
			JOURNAL = {Real Anal. Exchange},
			VOLUME = {25},
			number={1},
			date = {1999/00},
			NUMBER = {1},
			PAGES = {89--94},
		}
	
		\bib{Na3}{article}{
			author={Natkaniec, Tomasz},
			title={Algebrability of some families of Darboux-like functions},
			journal={Linear Algebra Appl.},
			volume={439},
			date={2013},
			number={10},
			pages={3256--3263},
			doi={10.1016/j.laa.2013.08.040},
		}
	
		\bib{Na4}{article}{
			author={Natkaniec, Tomasz},
			title={Extendability and almost continuity},
			journal={Real Anal. Exchange},
			volume={21},
			date={1995/96},
			number={1},
			pages={349--355},
		}
		
		\bib{Na2}{article}{
			AUTHOR = {Natkaniec, Tomasz},
			TITLE = {On lineability of families of non-measurable functions of two
				variable},
			JOURNAL = {Rev. R. Acad. Cienc. Exactas F\'{\i}s. Nat. Ser. A Mat. RACSAM},
			VOLUME = {115},
			YEAR = {2021},
			NUMBER = {1},
			PAGES = {Paper No. 33, 10},
		}
	
%		\color{red}
%		\bib{Na}{article}{
%			author={Natkaniec, Tomasz},
%			title={On extendable derivations},
%			journal={Real Anal. Exchange},
%			volume={34},
%			date={2009},
%			number={1},
%			pages={207--213},
%		}	
%		\color{black}
	
		\bib{Na5}{article}{
			author={Natkaniec, Tomasz},
			title={The $\scr I$-almost constant convergence of sequences of real
				functions},
			journal={Real Anal. Exchange},
			volume={28},
			date={2002/03},
			number={2},
			pages={481--491},
			doi={10.14321/realanalexch.28.2.0481},
		}	
	
		\bib{NaWe}{article}{
			author={Natkaniec, T.},
			author={Weso\l owska, J.},
			title={On the convergence of $\omega_1$ sequences of real functions},
			journal={Acta Math. Hungar.},
			volume={90},
			date={2001},
			number={4},
			pages={333--350},
			doi={10.1023/A:1010691315699},
		}
		
		\bib{Ne}{article}{
			author={Neubrunnov\'{a}, Anna},
			title={On quasicontinuous and cliquish functions},
			journal={\v{C}asopis P\v{e}st. Mat.},
			volume={99},
			date={1974},
			pages={109--114},
		}
				
		%		\bib{ro}{book}{
		%			AUTHOR = {Robert, A. M.},
		%			TITLE = {A course in $p$-adic analysis},
		%			SERIES = {Graduate Texts in Mathematics},
		%			VOLUME = {198},
		%			PUBLISHER = {Springer-Verlag, New York},
		%			YEAR = {2000},
		%			PAGES = {xvi+437},
		%		}				
		%		
		%		\bib{juanksu}{book}{
		%			author={Seoane-Sep\'{u}lveda, J.B.},
		%			title={Chaos and lineability of pathological phenomena in analysis},
		%			note={Thesis (Ph.D.)--Kent State University},
		%			publisher={ProQuest LLC, Ann Arbor, MI},
		%			date={2006},
		%			pages={139},
		%		}
		
		\bib{NoSzWe}{article}{
			author={Nowik, A.},
			author={Szyszkowski, M.},
			author={Weiss, T.},
			title={On the $\omega_1$ limits of subsets of the real line},
			journal={Acta Math. Hungar.},
			volume={123},
			date={2009},
			number={4},
			pages={311--317},
			doi={10.1007/s10474-008-8115-5},
		}
		
%		\color{red}
%		\bib{Sa}{article}{
%			author={\v{S}al\'{a}t, Tibor},
%			title={On transfinite sequences of $B$-measurable functions},
%			journal={Fund. Math.},
%			volume={78},
%			date={1973},
%			number={2},
%			pages={157--162},
%		}
	
		\bib{Se}{book}{
			author={Seoane-Sep{\'u}lveda, J.B.},
			title={Chaos and lineability of pathological phenomena in analysis},
			note={Thesis (Ph.D.)--Kent State University},
			publisher={ProQuest LLC, Ann Arbor, MI},
			date={2006},
			pages={139},
		}
		
		\bib{Si}{article}{
			author={Sierpi\'{n}ski, W.},
			title={Sur les suites transfinies convergentes de fonctions de Baire},
			journal={Fund. Math.},
			volume={1},
			date={1920},
			pages={134--141},
		}
%	\color{red}
%		
%		\bib{St}{article}{
%			author={Stro\'{n}ska, Ewa},
%			title={On almost continuous derivations},
%			journal={Real Anal. Exchange},
%			volume={32},
%			date={2007},
%			number={2},
%			pages={391--395},
%		}
%		\color{black}
	\end{biblist}
\end{bibdiv}

\end{document}